\tikzset{snake it/.style={decorate, decoration=snake}}
\tikzset{snake it/.style={decorate, decoration=snake}}
\newtheorem{theorem}{Theorem}[section]
\newtheorem{lemma}[theorem]{Lemma}
\newtheorem{proposition}{Proposition}[section]
\theoremstyle{definition}
\newtheorem{definition}[theorem]{Definition}
\newtheorem{corollary}[theorem]{Corollary}
\newtheorem{conjecture}[theorem]{Conjecture}
\newtheorem{example}[theorem]{Example}
\theoremstyle{remark}
\newtheorem{remark}[theorem]{Remark}
\numberwithin{equation}{section}
\tikzset{snake it/.style={decorate, decoration=snake}}
\tikzset{snake it/.style={decorate, decoration=snake}}
\let\@wraptoccontribs\wraptoccontribs
\begin{document}

\title[Twin-star hypothesis and cycle-free $d$-partitions of $K_{2d}$ ]{Twin-star hypothesis and cycle-free $d$-partitions of $K_{2d}$  }

\author{Matthew J. Fyfe }
\address{Department of Mathematics and Statistics, Bowling Green State University, Bowling Green, OH 43403 }
\email{mjfyfe@bgsu.edu}

\author{Steven R. Lippold}
\address{Department of Chemistry, Mathematics, and Physics, Geneva College, Beaver Falls, PA 15010}
\email{srlippol@geneva.edu}


\author{Mihai D. Staic}
\address{Department of Mathematics and Statistics, Bowling Green State University, Bowling Green, OH 43403 }
\address{Institute of Mathematics of the Romanian Academy, PO.BOX 1-764, RO-70700 Bu\-cha\-rest, Romania.}

\email{mstaic@bgsu.edu}


\author{Alin Stancu}
\address{Department of Mathematics, Columbus State  University, Columbus, GA 31907}
\email{stancu\_alin1@columbusstate.edu}

\subjclass[2020]{Primary  05C70, Secondary 05E18   }

\keywords{edge-partitions, complete graph}

\begin{abstract} In this paper we study an equivalence relation defined  on the set of cycle-free $d$-partitions of the complete graph $K_{2d}$. We discuss a conjecture which states that this equivalence relation has only one equivalence class, and show that the conjecture is equivalent with the so called twin-star hypothesis. We check the conjecture in the case $d=4$ and disuses how this relates to the determinant-like map $det^{S^2}$.
\end{abstract}

\maketitle


%

\section*{Introduction}


It is well known that if $\Gamma$ is a subgraph of the complete graph $K_n$ with $|E(\Gamma)|\geq n$ then $\Gamma$ has a cycle. Using this fact, one can show that if $\Gamma_1,\dots ,\Gamma_d$ are cycle-free subgraphs of $K_{n}$ such that the set of edges $E(\Gamma_1),\dots,E(\Gamma_d)$ form a partition for $E(K_n)$,  then $d\geq \lfloor \frac{n+1}{2} \rfloor$. In particular, once we fix $d$ the largest $n$ for which such a partition  exists is $n=2d$. Moreover, in this limit case, if $\Gamma_1,\dots, \Gamma_d$ is a cycle-free partition for $K_{2d}$ then $|E(\Gamma_i)|=2d-1$ for each $1\leq i\leq d$. While this seams like a random fact, it turns out that the set of cycle-free $d$-partitions of the complete graph $K_{2d}$ is naturally endowed with a collection of involutions, that play an important role to a generalization of the determinant map.

More precisely, for a vector space $V_d$ of dimension $d$, the classical determinant can be viewed as the unique (up to a scalar) nontrivial linear map $det: V_d^{\otimes d}\rightarrow k$, such that $det(\otimes_{ 1\leq i\leq d} (v_i)=0)$, if there exist $1\leq x < y \leq d$ such that $v_x = v_y$. It was shown in \cite{dets2} that there exist a nontrivial linear map $det^{S^2}:V_d^{\otimes d(2d-1)}\to k$ with the property that $det^{S^2}(\otimes_{1\leq i<j\leq 2d}(v_{i,j}))=0$, if there exist $1\leq x<y<z\leq 2d$ such that $v_{x,y}=v_{x,z}=v_{y,z}$. This result gives a partial answer to a conjecture from  \cite{sta2} where it was proposed that such a nontrivial map exists and is unique (up to a scalar). The uniqueness in the case $d=2$ was checked in \cite{sta2} by brute force.

In an attempt to establish the uniqueness of $det^{S^2}$, for any value of $d$, cycle-free $d$-partitions of the complete graph $K_{2d}$ were introduced in \cite{edge}. It was shown there that for any cycle-free $d$-partition $(\Gamma_1,\dots,\Gamma_d)$ of $K_{2d}$ and any three vertices $x$, $y$ and $z$, there exists a unique cycle-free $d$-partition $(\Gamma_1,\dots,\Gamma_d)^{(x,y,z)}$ which coincides with the initial partition everywhere except on at least two of the edges  $(x,y)$, $(x,z)$, and $(y,z)$. This result establishes the existence of an involution $(x,y,z)$ on the set of cycle-free $d$-partitions $\mathcal{P}^{cf}_d(K_{2d})$ and allows us to restate the uniqueness of the map $det^{S^2}$ in combinatorial terms. It follows from  \cite{edge} that the map $det^{S^2}$ is unique (up to a scalar) if and only if the action induced by the involutions $(x,y,z)$ on the set of cycle-free $d$-partitions $\mathcal{P}^{cf}_d(K_{2d})$ is transitive. This was shown to be the case for $d=3$ in \cite{edge}, while the general case it's still an open question. One should notice that the map $det^{S^2}$ besides its connection to combinatorics, has applications to geometry \cite{sv}, and to the equilibrium problem in physics \cite{req}.

In this paper we translate in combinatorial language some of the linear algebra results from \cite{edge}. This makes the uniqueness problem easier to state and allows us to reduce it to a simpler question, the so called  twin-star hypothesis.  Using this reduction we check the conjecture in the case $d=4$.

In Section  \ref{section1} we recall some definitions and results from \cite{edge}. We define the cycle-free $d$-partitions of the complete graph $K_{2d}$ and introduce the twin-star graph $TS_d$ and the graph $I_d$, which will play an important role in the paper. We also recall Lemma $\ref{keylemma}$, which is the main ingredient in defining the involution mentioned above. We explain how these partitions arise naturally in relation to the determinant-like map $det^{S^2}$.

In Section \ref{section2} we define the notions of involution equivalence and weak equivalence, on the set $\mathcal{P}^{cf}_d(K_{2d})$ of cycle-free $d$-partitions of the complete graph $K_{2d}$, and discuss some of their properties. From \cite{edge} and \cite{dets2} it follows that the uniqueness (up to a scalar) of the determinant map $det^{S^2}$ is equivalent to Conjecture \ref{mainconj}, which states that any two cycle-free $d$-partitions of $K_{2d}$ are involution equivalent.

In Section \ref{section3} the twin-star hypothesis $TS(d)$ is introduced. A cycle-free $d$-partition satisfies the hypothesis if it is weakly equivalent to one which has at least one graph isomorphic to $TS_d$. We note here that this hypothesis is true for $d=1, 2,$ and $3$, since Conjecture \ref{mainconj} is true (cf. \cite{sta2} and \cite{edge}). The lemmas in this section are in support of establishing  one of main results of the paper, Theorem \ref{mainth}, which asserts that Conjecture \ref{mainconj} is true if the twin-star hypothesis $TS(i)$ is satisfied for $1\leq i\leq d$. We conclude this section with Theorem \ref{theI2d}, which shows that any cycle-free partition of $K_{2d}$ is involution equivalent to one that contains a graph of the form $I_{2d}$.

Section \ref{section4} is dedicated entirely to proving Conjecture \ref{mainconj} for $d=4$ and it has two distinct parts, a combinatorial one and a computational one. For the combinatorial part, we use that there are $23$ isomorphism types of trees with $8$ vertices and Theorem \ref{theI2d} to  show that every cycle-free $4$-partition $(\Gamma_1,\Gamma_2,\Gamma_3,\Gamma_4)$ of the complete graph $K_8$ is involution equivalent with a cycle-free $4$-partition  $(\Delta_1,\Delta_2,\Delta_3,\Delta_4)$ such that $\Delta_4$ is either the graph $T_{19}$, or the  twin star graph $TS_4$. For the computational part, with the help of MATLAB, we verify that every cycle-free $4$-partition $(\Gamma_1,\Gamma_2,\Gamma_3,\Gamma_4)$ of the complete graph $K_8$ such that $\Gamma_4$ is the graph $T_{19}$ is weakly equivalent to a cycle-free $4$-partition  $(\Delta_1,\Delta_2,\Delta_3,\Delta_4)$ such that $\Delta_4$ is the twin star graph $TS_4$. This shows that the twin-star hypothesis $\mathcal{TS}(4)$ holds true. Since the twin-star hypothesis is true for $d=1, 2,$ and  $3$, it follows from Theorem \ref{mainth} that Conjecture \ref{mainconj} is true for $d=4$.

Finally, Appendix \ref{appendix1} contains a proof of Lemma \ref{lemma3}, which we postponed for later, to streamline the presentation. Appendix \ref{appendix2} consists of the list of the $23$ isomorphism types of trees with $8$ vertices, which is taken from \cite{h}, and is included for the convenience of the reader.

\section{Preliminaries}
\label{section1}

All graphs considered in this paper are simple (i.e. no loops and no multiple edges). For a graph $\Gamma$ we denote by $V(\Gamma)$ and $E(\Gamma)$ its set of vertices and edges respectively. Since our graphs are not oriented, we will not distinguish between the edge $(a,b)$ and $(b,a)\in E(\Gamma)$. For $s\geq 3$  we say that a graph $\Gamma$ has an
$s$-cycle if we can find a collection of distinct vertices  $v_1, v_2, \dots , v_s\in V(\Gamma)$ such that $(v_1,v_2)$, $(v_2,v_3)$, ..., $(v_{s-1},v_s)$, $(v_s,v_1)\in E(\Gamma)$.  We say that the graph $\Gamma$ is cycle-free if it does not have any cycles (i.e. it is a tree or a forest).
We denote by $K_{n}$  the complete graph with $n$ vertices $V(K_{n})=\{1,2,\dots,n\}$, and $E(K_n)=\{(i,j)|1\leq i<j\leq n\}$.

\subsection{Cycle-free $d$-partitions of $K_{2d}$}

Next we recall from \cite{edge} a few definitions and results about $d$-partitions of the complete graph $K_{2d}$.

\begin{definition} A $d$-partition of $K_{n}$ is an ordered collection $\mathcal{P}=(\Gamma_1,\Gamma_2,\dots,\Gamma_d)$ of sub-graphs $\Gamma_i$ of  $K_{n}$ such that
\begin{enumerate}
\item $\displaystyle{V(\Gamma_i)=V(K_{n})=\{1,2,\dots,n\}}$ for all $1\leq i \leq d$,
\item $\displaystyle{E(\Gamma_i)\bigcap E(\Gamma_j)=\emptyset}$ for all $i\neq j$,
\item $\displaystyle{\bigcup_{i=1}^dE(\Gamma_i)=E(K_{n})}$.
\end{enumerate}
We say that the $d$-partition $(\Gamma_1,\Gamma_2,\dots,\Gamma_d)$ of $K_{n}$ in homogeneous if $|E(\Gamma_i)\vert=|E(\Gamma_j)\vert$ for all $1\leq i<j\leq d$. We say that the $d$-partition $(\Gamma_1,\Gamma_2,\dots,\Gamma_d)$  is cycle-free if each $\Gamma_i$ is cycle-free.
\end{definition}

\begin{example} Consider $(\Gamma_1,\Gamma_2,\Gamma_3)$ a $3$-partition of the complete graph $K_6$ determined by
$V(\Gamma_1)=V(\Gamma_2)=V(\Gamma_3)=\{1,2,3,4,5,6\}$, $E(\Gamma_1)=\{(1,2),(1,3),(1,5),(2,4),(2,6)\}$, $E(\Gamma_2)=\{(1,4),(2,3),(3,4),(3,5),(4,6)\}$, and $E(\Gamma_3)=\{(1,6),(2,5),(3,6),(4,5),(5,6)\}$.  Then $(\Gamma_1, \Gamma_2,\Gamma_3)$ is a homogeneous, cycle-free $3$-partition for $K_6$ (see  Figure \ref{fig1}).
\end{example}

\begin{figure}[h!]
	\centering
	\begin{tikzpicture}
		[scale=1.5,auto=left,every node/.style={shape = circle, draw, fill = white,minimum size = 1pt, inner sep=0.3pt}]
		\node (n1) at (0,0) {1};
		\node (n2) at (0.5,0.85)  {2};
		\node (n3) at (1.5,0.85)  {3};
		\node (n4) at (2,0)  {4};
		\node (n5) at (1.5,-0.85)  {5};
		\node (n6) at (0.5,-0.85)  {6};
		\foreach \from/\to in {n1/n2,n1/n3,n1/n5,n2/n4,n2/n6}
		\draw[line width=0.5mm,red]  (\from) -- (\to);	
		\node (n11) at (3,0) {1};
		\node (n21) at (3.5,0.85)  {2};
		\node (n31) at (4.5,0.85)  {3};
		\node (n41) at (5,0)  {4};
		\node (n51) at (4.5,-0.85)  {5};
		\node (n61) at (3.5,-0.85)  {6};
		\foreach \from/\to in {n11/n41,n21/n31,n31/n41,n31/n51,n41/n61}
		\draw[line width=0.5mm,orange]  (\from) -- (\to);	
		
		\node (n12) at (6,0) {1};
		\node (n22) at (6.5,0.85)  {2};
		\node (n32) at (7.5,0.85)  {3};
		\node (n42) at (8,0)  {4};
		\node (n52) at (7.5,-0.85)  {5};
		\node (n62) at (6.5,-0.85)  {6};
		\foreach \from/\to in {n12/n62,n22/n52,n32/n62,n42/n52,n52/n62}
		\draw[line width=0.5mm,blue]  (\from) -- (\to);	
		
	\end{tikzpicture}
	\caption{A homogeneous cycle-free $3$-partition of  $K_6$ } \label{fig1}
\end{figure}
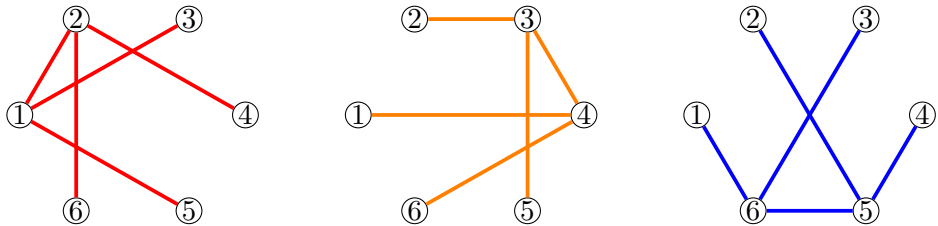

\begin{example} Consider $(\Delta_1, \Delta_2,\Delta_3)$  a $3$-partition of the complete graph $K_6$ determined by
 $V(\Delta_1)=V(\Delta_2)=V(\Delta_3)=\{1,2,3,4,5,6\}$, $E(\Delta_1)=\{(1,2),(1,4),(1,6),(2,4),(2,5)\}$, $E(\Delta_2)=\{(1,3),(2,3),(3,4),(3,6),(4,5)\}$, and $E(\Delta_3)=\{(1,5),(2,6),(3,5),(4,6),(5,6)\}$.  Then $(\Delta_1, \Delta_2,\Delta_3)$ is a homogeneous $3$-partition for $K_6$, but it is not cycle-free  since $\Delta_1$ has the cycle $(1,2,4)$ (see  Figure \ref{fig2}).
\end{example}

\begin{figure}[h!]
	\centering
	\begin{tikzpicture}
		[scale=1.5,auto=left,every node/.style={shape = circle, draw, fill = white,minimum size = 1pt, inner sep=0.3pt}]
		\node (n1) at (0,0) {1};
		\node (n2) at (0.5,0.85)  {2};
		\node (n3) at (1.5,0.85)  {3};
		\node (n4) at (2,0)  {4};
		\node (n5) at (1.5,-0.85)  {5};
		\node (n6) at (0.5,-0.85)  {6};
		\foreach \from/\to in {n1/n2,n1/n4,n1/n6,n2/n4,n2/n5}
		\draw[line width=0.5mm,red]  (\from) -- (\to);	
		\node (n11) at (3,0) {1};
		\node (n21) at (3.5,0.85)  {2};
		\node (n31) at (4.5,0.85)  {3};
		\node (n41) at (5,0)  {4};
		\node (n51) at (4.5,-0.85)  {5};
		\node (n61) at (3.5,-0.85)  {6};
		\foreach \from/\to in {n11/n31,n21/n31,n31/n41,n31/n61,n41/n51}
		\draw[line width=0.5mm,orange]  (\from) -- (\to);	
		
		\node (n12) at (6,0) {1};
		\node (n22) at (6.5,0.85)  {2};
		\node (n32) at (7.5,0.85)  {3};
		\node (n42) at (8,0)  {4};
		\node (n52) at (7.5,-0.85)  {5};
		\node (n62) at (6.5,-0.85)  {6};
		\foreach \from/\to in {n12/n52,n22/n62,n32/n52,n42/n62,n52/n62}
		\draw[line width=0.5mm,blue]  (\from) -- (\to);	
		
	\end{tikzpicture}
	\caption{A homogeneous $3$-partition of  $K_6$ that is not cycle-free } \label{fig2}
\end{figure}
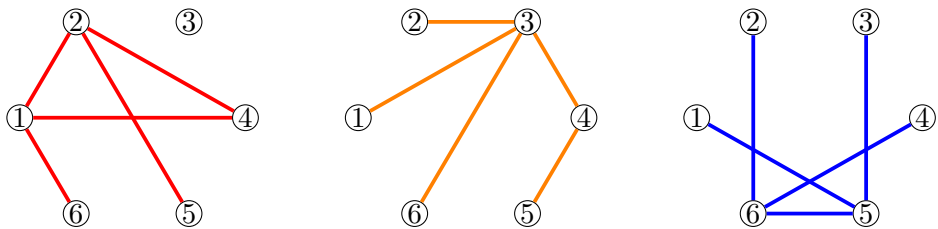

\begin{definition}
We denote by $\mathcal{P}_d^{cf}(K_{2d})$ the set of cycle-free $d$-partitions of the complete graph $K_{2d}$.
\end{definition}

\begin{remark} It is well know that a graph with $n$ vertices and at least $n$ edges must have a cycle.  The graph $K_{2d}$ has  $2d$ vertices and $d(2d-1)$ edges. If a $d$-partition $(\Gamma_1,\dots,\Gamma_d)$ of $K_{2d}$ is not homogeneous then one of the graphs $\Gamma_{i}$ will have at least $2d$ edges. This means that $\Gamma_{i}$ will have a cycle, and so the $d$-partition would not be cycle-free. In particular, this means that any cycle-free $d$-partition of $K_{2d}$ is automatically homogeneous.
\end{remark}

\begin{example} The twin-star graph $TS_d$ and the path graph $I_{2d}$ will play an important role in this paper, so, for convenience, we recall their presentation in Figure \ref{fig3} and Figure \ref{fig301} respectively.
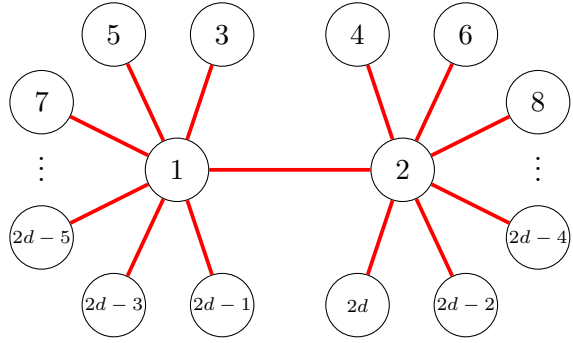
\begin{figure}[h]
\centering
\begin{tikzpicture}
  [scale=0.6,auto=left]
	\node[shape=circle,draw=black,minimum size = 24pt,inner sep=0.3pt] (n1) at (3,0) {$1$};
	\node[shape=circle,draw=black,minimum size = 24pt,inner sep=0.3pt] (n2) at (8,0) {$2$};
	\node[shape=circle,draw=black,minimum size = 24pt,inner sep=0.3pt] (n3) at (7,3) {$4$};
  \node[shape=circle,draw=black,minimum size = 24pt,inner sep=0.3pt] (n4) at (4,3) {$3$};
	\node[shape=circle,draw=black,minimum size = 24pt,inner sep=0.3pt] (n5) at (9.4,3) {$6$};
  \node[shape=circle,draw=black,minimum size = 24pt,inner sep=0.3pt] (n6) at (1.6,3) {$5$};
	\node[shape=circle,draw=black,minimum size = 24pt,inner sep=0.3pt] (n51) at (11,1.5) {$8$};
  \node[shape=circle,draw=black,minimum size = 24pt,inner sep=0.3pt] (n61) at (0,1.5) {$7$};
	\node[shape=circle,draw=black,minimum size = 24pt,inner sep=0.3pt] (n71) at (11,-1.5) {{\tiny $2d-4$}};
  \node[shape=circle,draw=black,minimum size = 24pt,inner sep=0.3pt] (n81) at (0,-1.5) {{\tiny $2d-5$}};
	\node[shape=circle,draw=black,minimum size = 24pt,inner sep=0.3pt] (n7) at (9.4,-3) {{\tiny $2d-2$}};
  \node[shape=circle,draw=black,minimum size = 24pt,inner sep=0.3pt] (n8) at (1.6,-3) {{\tiny $2d-3$}};
	\node[shape=circle,draw=black,minimum size = 24pt,inner sep=0.3pt] (n9) at (7,-3) {{\tiny $2d$}};
  \node[shape=circle,draw=black,minimum size = 24pt,inner sep=0.3pt] (n10) at (4,-3) {{\tiny $2d-1$}};
	
	\node[shape=circle,minimum size = 24pt,inner sep=0.3pt] (m4) at (0,0.2) {{\large \vdots}};
	\node[shape=circle,minimum size = 24pt,inner sep=0.3pt] (m5) at (11,0.2) {{\large \vdots}};

	  \draw[line width=0.5mm,red]  (n1) -- (n2)  ;
		\draw[line width=0.5mm,red]  (n1) -- (n4)  ;
		\draw[line width=0.5mm,red]  (n1) -- (n6)  ;
		\draw[line width=0.5mm,red]  (n1) -- (n61)  ;
		\draw[line width=0.5mm,red]  (n1) -- (n8)  ;
		\draw[line width=0.5mm,red]  (n1) -- (n81)  ;
		\draw[line width=0.5mm,red]  (n1) -- (n10)  ;
		\draw[line width=0.5mm,red]  (n3) -- (n2)  ;
	  \draw[line width=0.5mm,red]  (n5) -- (n2)  ;
		\draw[line width=0.5mm,red]  (n51) -- (n2)  ;
	  \draw[line width=0.5mm,red]  (n7) -- (n2)  ;
	  \draw[line width=0.5mm,red]  (n71) -- (n2)  ;
		\draw[line width=0.5mm,red]  (n9) -- (n2)  ;
\end{tikzpicture}
\caption{$TS_d$ the Twin-Star graph  with $2d$ vertices} \label{fig3}
\end{figure}

\begin{figure}[h]
\centering
\begin{tikzpicture}
  [scale=0.6,auto=left]
	
	\node[shape=circle,draw=black,minimum size = 24pt,inner sep=0.3pt] (n1) at (-1,0) {$1$};
	\node[shape=circle,draw=black,minimum size = 24pt,inner sep=0.3pt] (n2) at (2,0) {$2$};
	\node[shape=circle,draw=black,minimum size = 24pt,inner sep=0.3pt] (n3) at (5,0) {$3$};

		\node[shape=circle,minimum size = 24pt,inner sep=0.3pt] (m4) at (8,0) {{\large \dots}};

	\node[shape=circle,draw=black,minimum size = 24pt,inner sep=0.3pt] (n5) at (11,0) {{\tiny $2d-2$}};
	\node[shape=circle,draw=black,minimum size = 24pt,inner sep=0.3pt] (n6) at (14,0) {{\tiny $2d-1$}};
	\node[shape=circle,draw=black,minimum size = 24pt,inner sep=0.3pt] (n7) at (17,0) {{\tiny $2d$}};
  \node[] (n8) at (6.5,0)  {};
	\node[] (n9) at (9.5,0)  {};
	
	  \draw[line width=0.5mm,red]  (n1) -- (n2)  ;
		\draw[line width=0.5mm,red]  (n2) -- (n3)  ;
		\draw[line width=0.5mm,red]  (n3) -- (n8)  ;
		\draw[line width=0.5mm,red]  (n9) -- (n5)  ;
		\draw[line width=0.5mm,red]  (n5) -- (n6)  ;
		\draw[line width=0.5mm,red]  (n6) -- (n7)  ;
	
\end{tikzpicture}
\caption{$I_{2d}$ the path graph with $2d$ vertices} \label{fig301}
\end{figure}
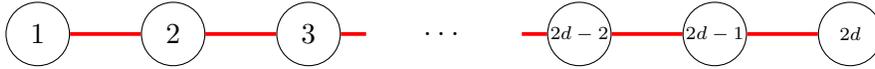

\end{example}

Next, we give an explicit example of a cycle-free $d$-partition of the complete graph $K_{2d}$ that will be used through the paper. In particular this shows that for all $d\geq 1$ the set $\mathcal{P}_d^{cf}(K_{2d})$ of cycle-free $d$-partition of $K_{2d}$ is not empty.

\begin{definition} For $1\leq a\leq d$ let $S_a=\{2a-1,2a\}$. For each $1\leq t\leq d$ we define the graphs $\Omega_t^{(d)}$ as follows.
\begin{enumerate}
\item $V(\Omega_t^{(d)})=\{1,2,\dots, 2d\}$ for all $1\leq t\leq d$.
\item If $1\leq i<j\leq 2d$, $i+j=0\; (mod\; 2)$  and $i\in S_a$ then  $(i,j)\in E(\Omega_a^{(d)})$.
\item If $1\leq i<j\leq 2d$, $i+j=1\; (mod\; 2)$ and $i\in S_b$ then  $(i,j)\in E(\Omega_b^{(d)})$.
\end{enumerate}
\end{definition}

\begin{remark}
The graph $\Omega_1^{(d)}$ is presented in Figure \ref{fig3}, while $\Omega_d^{(d)}$ is presented in Figure \ref{fig31}.
\begin{figure}[h]
\centering
\begin{tikzpicture}
  [scale=0.6,auto=left]
	\node[shape=circle,draw=black,minimum size = 24pt,inner sep=0.3pt] (n1) at (3,0) {{\tiny $2d$}};
	\node[shape=circle,draw=black,minimum size = 24pt,inner sep=0.3pt] (n2) at (8,0) {{\tiny $2d-1$}};
	\node[shape=circle,draw=black,minimum size = 24pt,inner sep=0.3pt] (n3) at (7,3) {$2$};
  \node[shape=circle,draw=black,minimum size = 24pt,inner sep=0.3pt] (n4) at (4,3) {$1$};
	\node[shape=circle,draw=black,minimum size = 24pt,inner sep=0.3pt] (n5) at (9.4,3) {$4$};
  \node[shape=circle,draw=black,minimum size = 24pt,inner sep=0.3pt] (n6) at (1.6,3) {$3$};
	\node[shape=circle,draw=black,minimum size = 24pt,inner sep=0.3pt] (n51) at (11,1.5) {$6$};
  \node[shape=circle,draw=black,minimum size = 24pt,inner sep=0.3pt] (n61) at (0,1.5) {$5$};
	\node[shape=circle,draw=black,minimum size = 24pt,inner sep=0.3pt] (n71) at (11,-1.5) {{\tiny $2d-6$}};
  \node[shape=circle,draw=black,minimum size = 24pt,inner sep=0.3pt] (n81) at (0,-1.5) {{\tiny $2d-7$}};
	\node[shape=circle,draw=black,minimum size = 24pt,inner sep=0.3pt] (n7) at (9.4,-3) {{\tiny $2d-4$}};
  \node[shape=circle,draw=black,minimum size = 24pt,inner sep=0.3pt] (n8) at (1.6,-3) {{\tiny $2d-5$}};
	\node[shape=circle,draw=black,minimum size = 24pt,inner sep=0.3pt] (n9) at (7,-3) {{\tiny $2d-2$}};
  \node[shape=circle,draw=black,minimum size = 24pt,inner sep=0.3pt] (n10) at (4,-3) {{\tiny $2d-3$}};
	
	\node[shape=circle,minimum size = 24pt,inner sep=0.3pt] (m4) at (0,0.2) {{\large \vdots}};
	\node[shape=circle,minimum size = 24pt,inner sep=0.3pt] (m5) at (11,0.2) {{\large \vdots}};

	  \draw[line width=0.5mm,blue]  (n1) -- (n2)  ;
		\draw[line width=0.5mm,blue]  (n1) -- (n4)  ;
		\draw[line width=0.5mm,blue]  (n1) -- (n6)  ;
		\draw[line width=0.5mm,blue]  (n1) -- (n61)  ;
		\draw[line width=0.5mm,blue]  (n1) -- (n8)  ;
		\draw[line width=0.5mm,blue]  (n1) -- (n81)  ;
		\draw[line width=0.5mm,blue]  (n1) -- (n10)  ;
		\draw[line width=0.5mm,blue]  (n3) -- (n2)  ;
	  \draw[line width=0.5mm,blue]  (n5) -- (n2)  ;
		\draw[line width=0.5mm,blue]  (n51) -- (n2)  ;
	  \draw[line width=0.5mm,blue]  (n7) -- (n2)  ;
	  \draw[line width=0.5mm,blue]  (n71) -- (n2)  ;
		\draw[line width=0.5mm,blue]  (n9) -- (n2)  ;
\end{tikzpicture}
\caption{Labeling for $\Omega_d^{(d)}$} \label{fig31}
\end{figure}
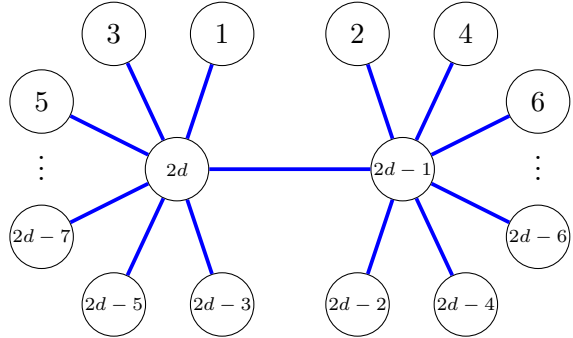
\end{remark}

\begin{lemma} \label{omegad}
For all $1\leq t\leq d$ the graph $\Omega_t^{(d)}$  is isomorphic to $TS_d$. In particular $E_d=(\Omega_1^{(d)},\dots,\Omega_d^{(d)})$ is a cycle-free $d$-partition of $K_{2d}$.
\end{lemma}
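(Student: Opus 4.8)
The plan is to separate the statement into its two halves: that the assignment of rules (2)--(3) really does split $E(K_{2d})$ into the graphs $\Omega_t^{(d)}$, and that each such graph is a twin star. For the first half, observe that every edge $(i,j)$ with $i<j$ has a well-defined parity of $i+j$: if $i+j$ is even, rule (2) puts it in $\Omega_a^{(d)}$ for the unique $a$ with $i\in S_a$; if $i+j$ is odd, rule (3) puts it in $\Omega_b^{(d)}$ for the unique $b$ determined by the pair containing the larger endpoint $j$ (this is the convention for which $\Omega_1^{(d)}$ is exactly the graph of Figure \ref{fig3} and $\Omega_d^{(d)}$ that of Figure \ref{fig31}). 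Since $\{S_1,\dots,S_d\}$ partitions $\{1,\dots,2d\}$, each edge receives exactly one label, so $E(\Omega_1^{(d)}),\dots,E(\Omega_d^{(d)})$ are pairwise disjoint with union $E(K_{2d})$; together with $V(\Omega_t^{(d)})=\{1,\dots,2d\}$ this is precisely the definition of a $d$-partition. It then remains only to prove $\Omega_t^{(d)}\cong TS_d$, which will simultaneously deliver the cycle-free property since $TS_d$ is a tree.

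Next I would show that $\Omega_t^{(d)}$ is the twin star with the two adjacent centers $2t-1$ and $2t$. Fixing $t$, I would read off the neighborhood of each vertex directly from the two rules, splitting into cases according to whether the other endpoint is smaller or larger and of equal or opposite parity. This yields: $2t-1$ is adjacent to $2t$, to the smaller even vertices $2,4,\dots,2t-2$ (via rule (3)), and to the larger odd vertices $2t+1,2t+3,\dots,2d-1$ (via rule (2)); symmetrically $2t$ is adjacent to $2t-1$, to the smaller odd vertices $1,3,\dots,2t-3$, and to the larger even vertices $2t+2,\dots,2d$. Counting shows each of $2t-1,2t$ has degree $d$, they are joined by the edge $(2t-1,2t)$, and the same case analysis shows every other vertex is incident to exactly one edge of $\Omega_t^{(d)}$, hence is a leaf. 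Thus $\Omega_t^{(d)}$ consists of two adjacent degree-$d$ centers each carrying $d-1$ pendant leaves, which is exactly the combinatorial description of $TS_d$; an explicit isomorphism sends $2t-1\mapsto 1$, $2t\mapsto 2$, and matches the two leaf-sets by any bijection, since leaves attached to a common center are interchangeable.

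Finally, because $\Omega_t^{(d)}\cong TS_d$ is connected with $2d$ vertices and $2d-1$ edges, it is a tree and in particular cycle-free, so $E_d=(\Omega_1^{(d)},\dots,\Omega_d^{(d)})$ is a cycle-free $d$-partition of $K_{2d}$. The only genuine work lies in the second paragraph: the bookkeeping of the parity-and-order case analysis, together with attention to the boundary situations, namely the shared edge $(2t-1,2t)$ and the extreme values $t=1$ and $t=d$, where one of the two families of leaves is empty. I expect this degree verification --- and checking that no non-center vertex accidentally receives a second edge --- to be the main, though entirely routine, obstacle; there is no conceptual difficulty once the assignment rule is read as above.
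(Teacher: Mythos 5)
Your proof is correct and follows essentially the same route as the paper's: identify $2t-1$ and $2t$ as the twin-star centers, check via the parity-and-order case analysis that every other vertex is a leaf attached to exactly one of them, and observe that each edge of $K_{2d}$ is assigned to exactly one $\Omega_t^{(d)}$, so $E_d$ is a cycle-free $d$-partition. You also correctly resolved the inconsistency in rule (3) (which literally reads $i\in S_b$ but, as the figures confirm, must assign odd-sum edges by the pair containing the larger endpoint $j$); the paper's own one-paragraph proof uses this same convention, merely leaving the degree-counting and leaf verification implicit.
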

\begin{proof}
Indeed, for each $1\leq t\leq d$  the vertices $2t-1$ and $2t$ are the centers of the twin star (connected by the edge $(2t-1,2t)$), while half of the other vertices  of $\Omega_t^{(d)}$ are connected to $2t-1$, and the other half are connected to $2t$. This shows that the graph $\Omega_t^{(d)}$ is isomorphic to $TS_d$. Moreover, every edge $(i,j)\in E(K_{2d})$ belongs to a unique graph $\Omega_t^{(d)}$ determined by the conditions $i\in S_a$, $j\in S_b$ and the value of $i+j \; (mod\; 2)$. In particular, this shows that $E_d$ is a homogeneous cycle-free $d$-partition of $K_{2d}$, and so the set $\mathcal{P}^{cf}_d(K_{2d})$ is non empty for all $d\geq 1$.
\end{proof}

\begin{remark}
When $d=3$ the partition $E_3$ is presented in Figure \ref{fig1}, and when $d=4$ the partition $E_4$ is presented in Figure \ref{figd4}. 
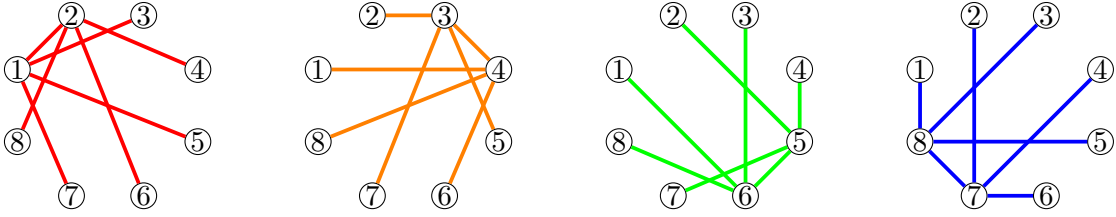
\begin{figure}[h!]
	\centering
	\begin{tikzpicture}
		[scale=0.8,auto=left,every node/.style={shape = circle, draw, fill = white,minimum size = 1pt, inner sep=0.3pt}]
		\node (n1) at (0,2.1) {1};
		\node (n2) at (0.9,3)  {2};
		\node (n3) at (2.1,3)  {3};
		\node (n4) at (3,2.1)  {4};
		\node (n5) at (3,0.9)  {5};
		\node (n6) at (2.1,0)  {6};
		\node (n7) at (0.9,0)  {7};
		\node (n8) at (0,0.9)  {8};
		\foreach \from/\to in {n1/n2,n1/n3,n1/n5,n1/n7,n2/n4,n2/n6,n2/n8}
		\draw[line width=0.5mm,red]  (\from) -- (\to);	
		\node (n12) at (5,2.1) {1};
		\node (n22) at (5.9,3)  {2};
		\node (n32) at (7.1,3)  {3};
		\node (n42) at (8,2.1)  {4};
		\node (n52) at (8,0.9)  {5};
		\node (n62) at (7.1,0)  {6};
		\node (n72) at (5.9,0)  {7};
		\node (n82) at (5,0.9)  {8};
		\foreach \from/\to in {n32/n22,n32/n42,n32/n52,n32/n72,n42/n12,n42/n62,n42/n82}
		\draw[line width=0.5mm,orange]  (\from) -- (\to);	
		
		\node (n13) at (10,2.1) {1};
		\node (n23) at (10.9,3)  {2};
		\node (n33) at (12.1,3)  {3};
		\node (n43) at (13,2.1)  {4};
		\node (n53) at (13,0.9)  {5};
		\node (n63) at (12.1,0)  {6};
		\node (n73) at (10.9,0)  {7};
		\node (n83) at (10,0.9)  {8};
		\foreach \from/\to in {n53/n63,n23/n53,n43/n53,n53/n73,n13/n63,n33/n63,n63/n83}
		\draw[line width=0.5mm,green]  (\from) -- (\to);	
		
		\node (n14) at (15,2.1) {1};
		\node (n24) at (15.9,3)  {2};
		\node (n34) at (17.1,3)  {3};
		\node (n44) at (18,2.1)  {4};
		\node (n54) at (18,0.9)  {5};
		\node (n64) at (17.1,0)  {6};
		\node (n74) at (15.9,0)  {7};
		\node (n84) at (15,0.9)  {8};
		\foreach \from/\to in {n74/n84,n74/n24,n44/n74,n64/n74,n14/n84,n34/n84,n54/n84}
		\draw[line width=0.5mm,blue]  (\from) -- (\to);
		
	\end{tikzpicture}
	\caption{$E_4=(\Omega_1^{(4)},\Omega_2^{(4)},\Omega_3^{(4)},\Omega_4^{(4)})$, a homogeneous cycle-free $4$-partition of  $K_8$ } \label{figd4}
\end{figure}

\end{remark}

The following result was proved in \cite{edge} and it is the main technical ingredient of this paper. 

\begin{lemma} Let $(\Gamma_1,\dots, \Gamma_d)$ be a cycle-free $d$-partition of $K_{2d}$, and take $1\leq x,y,z\leq 2d$ three distinct vertices of $K_{2d}$. Then, there exist $(\Lambda_1,\dots, \Lambda_d)$, a unique cycle-free  $d$-partition of $K_{2d}$  such that the two partitions $(\Gamma_1,\dots,\Gamma_d)$  and $(\Lambda_1,\dots,\Lambda_d)$ coincide on every edge of $K_{2d}$ except on the edges  $(x,y)$, $(x,z)$ and $(y,z)$ where they are different on at least two edges.
\label{keylemma}
\end{lemma}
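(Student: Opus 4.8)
The plan is to reduce the claim to a finite reassignment problem for the three triangle edges $e_1=(x,y)$, $e_2=(x,z)$, $e_3=(y,z)$ and to resolve it with the fundamental-cut criterion for spanning trees. First I would note that each $\Gamma_i$ is cycle-free with $2d-1$ edges on $2d$ vertices, hence is a \emph{spanning tree}, and that $e_1,e_2,e_3$ cannot all share one color, as that color would then contain the $3$-cycle $(x,y,z)$. Since the two partitions must agree off $\{e_1,e_2,e_3\}$, only these three edges may be recolored; and since cycle-free forces homogeneous (the Remark), each graph must still end with exactly $2d-1$ edges. Hence an admissible new partition is exactly a recoloring of $e_1,e_2,e_3$ that preserves, for every color, the number of triangle edges it carries. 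This leaves two types: either the three edges have three distinct colors $a,b,c$, or two of them share a color $a$ and the third has a color $b$.

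The single tool I would use repeatedly is: if $T$ is a spanning tree and $e\in E(T)$, then $T\setminus e$ is a forest with two components, and for any edge $f$ the graph $(T\setminus e)\cup f$ is again a spanning tree if and only if $f$ crosses the associated cut. Applying this to the color $i$ owning a triangle edge $e_i$: deleting $e_i$ gives a cut that is crossed by $e_i$ and, as one checks from the position of the third vertex, by \emph{exactly one} of the remaining two triangle edges. Thus each such color can be re-completed by its own edge or by a single prescribed alternative.

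In the $(1,1,1)$ type I would form the bipartite ``compatibility'' graph between colors $\{a,b,c\}$ and edges $\{e_1,e_2,e_3\}$, joining a color to every triangle edge that crosses its cut. By the previous paragraph each color has degree exactly two (its own edge plus one alternative), so the admissible recolorings are precisely the perfect matchings, and the original assignment is one of them. A short analysis of this bipartite graph, $2$-regular on the color side---equivalently, of the functional digraph $e_i\mapsto(\text{alternative of color }i)$, whose cycles on three loop-free vertices are forced to amount to a single cycle---shows there is exactly one further perfect matching. Being a non-identical bijection of a $3$-set, it necessarily moves at least two edges, as required.

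The main obstacle is the $(2,1)$ type, where color $a$ owns two triangle edges, say $e_1,e_2$ (these meet at $x$), and $b$ owns $e_3$. Deleting $e_1,e_2$ splits the tree of $a$ into three components $T_x\ni x$, $T_y\ni y$, $T_z\ni z$, and a count-preserving recoloring must return two triangle edges to $a$ and one to $b$; besides the identity there are exactly two such recolorings, each moving exactly two edges. I would check that \emph{both} non-identity choices, $\{e_2,e_3\}\to a$ and $\{e_1,e_3\}\to a$, reconnect the three components of $a$ into a spanning tree (each pair links $T_x,T_y,T_z$ in a path), so $a$ imposes no constraint; the selection is then made entirely by $b$, whose single cut, obtained by deleting $e_3$, is crossed by exactly one of the two candidate edges $e_1,e_2$---namely the one determined by the side of that cut containing $x$. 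Hence exactly one non-identity recoloring is globally cycle-free, giving both existence and uniqueness. Verifying the three-component reconnection and that the two cases for $b$ are genuinely complementary is the delicate step; everything else is bookkeeping.
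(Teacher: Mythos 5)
Your proposal is correct, but note one thing about the comparison: this paper does not prove Lemma \ref{keylemma} at all --- it is quoted as a result established in \cite{edge} --- so the only proof in this document to compare against is the worked $d=2$ example in Figure \ref{figkeylemma}, which illustrates exactly the kind of finite case check you formalize. Judged on its own, your argument is complete. Cycle-freeness forces homogeneity, so each $\Gamma_i$ is a spanning tree of $K_{2d}$, and since the partitions must agree off the triangle, any admissible replacement preserves the number of triangle edges per color; this correctly reduces the lemma to the two distribution types $(1,1,1)$ and $(2,1)$. Your cut computations are right: deleting a color's triangle edge yields a cut crossed by exactly one of the other two triangle edges (determined by the side containing the third vertex), and in the $(2,1)$ case any two of the three triangle edges reconnect the three components of $T_a$ into a tree, so the constraint comes only from $b$, whose cut is crossed by exactly one of $e_1,e_2$. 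The one place a referee would press you is the $(1,1,1)$ count: the phrase about the functional digraph being ``forced to amount to a single cycle'' must \emph{not} be read as saying the alternative map is a $3$-cycle (it usually is not); the correct statement is that a fixed-point-free self-map of a $3$-set has exactly one cycle (a $2$-cycle or a $3$-cycle), every non-identity admissible matching has support equal to a union of cycles of that map, and hence there is exactly one non-identity matching. With that sentence unpacked, existence and uniqueness follow in both types, and the non-identity solution automatically moves at least two edges. Compared with the strategy suggested by the paper's example (enumerate the possible rearrangements of the three edges and discard those creating cycles), your matroid-exchange packaging organizes the same case analysis more systematically and is, if anything, cleaner and dimension-independent.
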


\begin{definition}
 For $(\Gamma_1,\dots, \Gamma_d)\in \mathcal{P}_d^{cf}(K_{2d})$ and $1\leq x<y<z\leq 2d$, we denote by $$(\Gamma_1,\dots, \Gamma_d)^{(x,y,z)}\in \mathcal{P}_d^{cf}(K_{2d}),$$ the unique cycle-free $d$-partition of the complete graph $K_{2d}$ given by Lemma \ref{keylemma}.
\end{definition}

\begin{example}
Let's see an example on how Lemma \ref{keylemma} works when $d=2$. Consider $(\Gamma_1,\Gamma_2)$ the  cycle-free $2$-partition of $K_4$ from the left side of Figure \ref{figkeylemma}, and  take $(x,y,z)=(1,2,3)$.  By rearranging  the edges $(1,2)$, $(1,3)$ and $(2,3)$, we obtain the two homogeneous $2$-partitions of $K_4$ on the right side of Figure \ref{figkeylemma}. Since the partition on the bottom right corner has a cycle, it must be that $(\Gamma_1,\Gamma_2)^{(1,2,3)}$ is the partition on the top right corner of Figure \ref{figkeylemma}. 

\begin{figure}[h]
\centering
\begin{tikzpicture}
  [scale=0.8,auto=left]

	\node[shape=circle,draw=black,minimum size = 15pt,inner sep=0.3pt] (n1) at (0,0.2) {{$1$}};
  \node[shape=circle,draw=black,minimum size = 15pt,inner sep=0.3pt] (n2) at (2,0.2) {{$2$}};
	\node[shape=circle,draw=black,minimum size = 15pt,inner sep=0.3pt] (n3) at (2,-1.8) {{$3$}};
  \node[shape=circle,draw=black,minimum size = 15pt,inner sep=0.3pt] (n4) at (0,-1.8) {{$4$}};

	  \draw[line width=0.5mm,red]  (n1) -- (n2)  ;
		\draw[line width=0.5mm,red]  (n1) -- (n4)  ;
		\draw[line width=0.5mm,red]  (n2) -- (n3)  ;
		
			\node[shape=circle,draw=black,minimum size = 15pt,inner sep=0.3pt] (n11) at (3.5,0.2) {{$1$}};
  \node[shape=circle,draw=black,minimum size = 15pt,inner sep=0.3pt] (n21) at (5.5,0.2) {{$2$}};
	\node[shape=circle,draw=black,minimum size = 15pt,inner sep=0.3pt] (n31) at (5.5,-1.8) {{$3$}};
  \node[shape=circle,draw=black,minimum size = 15pt,inner sep=0.3pt] (n41) at (3.5,-1.8) {{$4$}};

	  \draw[line width=0.5mm,blue]  (n11) -- (n31)  ;
		\draw[line width=0.5mm,blue]  (n41) -- (n21)  ;
		\draw[line width=0.5mm,blue]  (n31) -- (n41)  ;

	
		\node[shape=circle,draw=black,minimum size = 15pt,inner sep=0.3pt] (n13) at (8.5,2) {{$1$}};
  \node[shape=circle,draw=black,minimum size = 15pt,inner sep=0.3pt] (n23) at (10.5,2) {{$2$}};
	\node[shape=circle,draw=black,minimum size = 15pt,inner sep=0.3pt] (n33) at (10.5,0) {{$3$}};
  \node[shape=circle,draw=black,minimum size = 15pt,inner sep=0.3pt] (n43) at (8.5,0) {{$4$}};
	
	  \draw[line width=0.5mm,red]  (n13) -- (n33)  ;
		\draw[line width=0.5mm,red]  (n43) -- (n13)  ;
		\draw[line width=0.5mm,red]  (n33) -- (n23)  ;
		
			\node[shape=circle,draw=black,minimum size = 15pt,inner sep=0.3pt] (n14) at (12,2) {{$1$}};
  \node[shape=circle,draw=black,minimum size = 15pt,inner sep=0.3pt] (n24) at (14,2) {{$2$}};
	\node[shape=circle,draw=black,minimum size = 15pt,inner sep=0.3pt] (n34) at (14,0) {{$3$}};
  \node[shape=circle,draw=black,minimum size = 15pt,inner sep=0.3pt] (n44) at (12,0) {{$4$}};
	

	  \draw[line width=0.5mm,blue]  (n14) -- (n24)  ;
		\draw[line width=0.5mm,blue]  (n44) -- (n24)  ;
		\draw[line width=0.5mm,blue]  (n34) -- (n44)  ;



\node[shape=circle,minimum size = 24pt,inner sep=0.3pt] (m4) at (7,-1) {{\large $\longrightarrow$}};
        \node[shape=circle,minimum size = 24pt,inner sep=0.3pt] (mu4) at (7,-0.5) {{ $(1,2,3)$}};
	
		\node[shape=circle,draw=black,minimum size = 15pt,inner sep=0.3pt] (n13) at (8.5,-1.5) {{$1$}};
  \node[shape=circle,draw=black,minimum size = 15pt,inner sep=0.3pt] (n23) at (10.5,-1.5) {{$2$}};
	\node[shape=circle,draw=black,minimum size = 15pt,inner sep=0.3pt] (n33) at (10.5,-3.5) {{$3$}};
  \node[shape=circle,draw=black,minimum size = 15pt,inner sep=0.3pt] (n43) at (8.5,-3.5) {{$4$}};
	
	  \draw[line width=0.5mm,red]  (n13) -- (n23)  ;
		\draw[line width=0.5mm,red]  (n13) -- (n43)  ;
		\draw[line width=0.5mm,red]  (n13) -- (n33)  ;
		
			\node[shape=circle,draw=black,minimum size = 15pt,inner sep=0.3pt] (n14) at (12,-1.5) {{$1$}};
  \node[shape=circle,draw=black,minimum size = 15pt,inner sep=0.3pt] (n24) at (14,-1.5) {{$2$}};
	\node[shape=circle,draw=black,minimum size = 15pt,inner sep=0.3pt] (n34) at (14,-3.5) {{$3$}};
  \node[shape=circle,draw=black,minimum size = 15pt,inner sep=0.3pt] (n44) at (12,-3.5) {{$4$}};
	

	  \draw[line width=0.5mm,blue]  (n24) -- (n34)  ;
		\draw[line width=0.5mm,blue]  (n44) -- (n24)  ;
		\draw[line width=0.5mm,blue]  (n44) -- (n34)  ;





\end{tikzpicture}
\caption{Computing  $(\Gamma_1,\Gamma_2)^{(1,2,3)}$} \label{figkeylemma}
\end{figure}
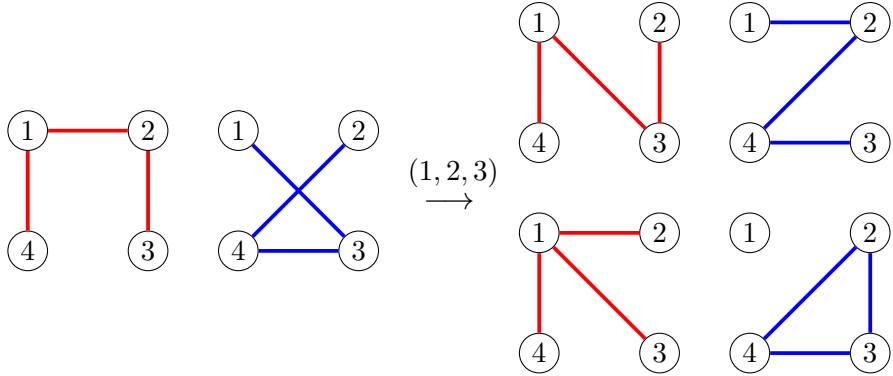

\end{example}


\begin{remark} Notice that for $(\Gamma_1,\dots, \Gamma_d)\in \mathcal{P}_d^{cf}(K_{2d})$ and $1\leq x<y<z\leq 2d$ we have  $$((\Gamma_1,\dots, \Gamma_d)^{(x,y,z)})^{(x,y,z)}=(\Gamma_1,\dots, \Gamma_d).$$
In particular there is an involution $(x,y,z):\mathcal{P}_d^{cf}(K_{2d})\to \mathcal{P}_d^{cf}(K_{2d})$ determined by
$$(\Gamma_1,\dots,\Gamma_d)\to (\Gamma_1,\dots,\Gamma_d)^{(x,y,z)}.$$
\end{remark}

\begin{definition}
We denote by $G_d$ the subgroup of the symmetric group $S_{\mathcal{P}_d^{cf}(K_{2d})}$ generated by the involutions $(x,y,z)$ for all
$1\leq x<y<z\leq 2d$. If $g\in G_d$ we will denote by $(\Gamma_1,\dots, \Gamma_d)^g$ its action on the cycle-free $d$-partition $(\Gamma_1,\dots, \Gamma_d)$. \label{defGd}
\end{definition}
\begin{example}
One can show that $\mathcal{P}_2^{cf}(K_{4})$ has $12$ elements, $G_2$ acts transitively on $\mathcal{P}_2^{cf}(K_{4})$, and $G_2$ is isomorphic to $S_5$. It was shown in \cite{edge} that  $\mathcal{P}_3^{cf}(K_{6})$ has  \num{66240} elements and  $G_3$ acts transitively on $\mathcal{P}_3^{cf}(K_{6})$. The isomorphism type and the order of the group $G_3$ are unknown at this point.
\end{example}

\subsection{The map $det^{S^2}$}
The results in this subsection are not needed in this paper. We provide them only as partial motivation for studying cycle-free $d$-partitions of $K_{2d}$.
\begin{remark} Let $V_d$ be a $k$ vector space with $dim_k(V_d)=d$.  The determinant map is the unique  (up to a constant) nontrivial linear map  $$det:V_d^{\otimes d}\to k,$$ with the property that $det(\otimes_{1\leq i\leq d} (v_i))=0$ if there exist $1\leq x<y\leq d$ such that $v_x=v_y$.

It was shown in \cite{dets2} that for all $d\geq 1$ there exists a nontrivial linear map $$det^{S^2}:V_d^{\otimes d(2d-1)}\to k,$$ with the property that $det^{S^2}(\otimes_{1\leq i<j\leq 2d}(v_{i,j}))=0$ if there exist $1\leq  x<y<z\leq 2d$ such that $v_{x,y}=v_{x,z}=v_{y,z}$.

It was  conjectured in \cite{sta2} that such a map is unique up to a constant. The conjecture is known to be true for the case $d=2$ (\cite{sta2}) and $d=3$ (\cite{edge}). It follows from the results in \cite{edge} that the conjecture is true for $V_d$ if and only if the action of the group $G_d$ on $\mathcal{P}_d^{cf}(K_{2d})$ is transitive.
\end{remark}

Let's briefly recall from \cite{edge} the connection between the map $det^{S^2}$ and $\mathcal{P}_d^{cf}(K_{2d})$. Fix $\mathcal{B}_d=\{e_1,e_2,\dots,e_d\}$ a basis for $V_d$. Then, to every edge $d$-partition $(\Gamma_1,\dots, \Gamma_d)$  of $K_{2d}$ we can associate a unique  element $$f_{(\Gamma_1,\dots, \Gamma_d)}=\otimes_{1\leq i<j\leq 2d}(f_{i,j})\in V_d^{\otimes d(2d-1)},$$ determined by  $f_{i,j}=e_k$ if and only if $(i,j)\in V(\Gamma_k)$. The following result supports the conjecture about uniqueness of the map $det^{S^2}$.

\begin{theorem} \cite{dets2}
 Let $(\Gamma_1,\dots, \Gamma_d)$ be an edge $d$-partition of $K_{2d}$, and $f_{(\Gamma_1,\dots, \Gamma_d)}$ be the associated element in $V_d^{\otimes d(2d-1)}$. Then $(\Gamma_1,\dots, \Gamma_d)$ is cycle-free if and only if $det^{S^2}(f_{(\Gamma_1,\dots, \Gamma_d)})\neq 0$.
\end{theorem}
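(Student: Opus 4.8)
The plan is to prove both directions of the equivalence by expanding $det^{S^2}(f_{(\Gamma_1,\dots,\Gamma_d)})$ in terms of the defining vanishing property of the map. Recall that the associated tensor is $f_{(\Gamma_1,\dots,\Gamma_d)}=\otimes_{1\leq i<j\leq 2d}(f_{i,j})$ where $f_{i,j}=e_k$ precisely when the edge $(i,j)$ lies in $\Gamma_k$. The key structural observation is that a triple of equal factors $v_{x,y}=v_{x,z}=v_{y,z}$ in the vanishing hypothesis of $det^{S^2}$ corresponds exactly to the three edges of a triangle on vertices $x<y<z$ all lying in the \emph{same} graph $\Gamma_k$. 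So the defining property of $det^{S^2}$ says: the map kills any tensor in which some triangle is monochromatic.

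For the easy direction, suppose $(\Gamma_1,\dots,\Gamma_d)$ is \emph{not} cycle-free. First I would reduce to the case of a monochromatic triangle. If some $\Gamma_k$ contains a cycle, it contains a shortest cycle; I would argue that in the complete graph setting the presence of any cycle in $\Gamma_k$ forces the existence of three vertices $x<y<z$ with all of $(x,y),(x,z),(y,z)$ in $\Gamma_k$. Indeed, take a shortest cycle in $\Gamma_k$; if it has length greater than $3$, a chord would shorten it, but the chord edge $(a,b)$ belongs to \emph{some} graph in the partition---here one must be careful, as the chord need not lie in $\Gamma_k$. The cleaner route is to invoke the counting remark from the excerpt: since any cycle-free $d$-partition of $K_{2d}$ is homogeneous with $|E(\Gamma_i)|=2d-1$, a non-cycle-free partition has some $\Gamma_k$ with at least $2d$ edges on $2d$ vertices, hence a cycle; but to get a monochromatic \emph{triangle} specifically, I would appeal to the construction of $det^{S^2}$ from \cite{dets2}, which is presumably built so that it vanishes on any tensor arising from a partition with a cycle, not merely a triangle. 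This suggests the theorem's vanishing condition is really detecting cycles of all lengths, and the correct argument expands $det^{S^2}$ as an alternating-type sum whose terms cancel whenever a cycle is present.

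For the converse---the substantive direction---suppose $(\Gamma_1,\dots,\Gamma_d)$ is cycle-free and I must show $det^{S^2}(f_{(\Gamma_1,\dots,\Gamma_d)})\neq 0$. Here I would rely on the explicit nontriviality established in \cite{dets2}: the map $det^{S^2}$ is nonzero on \emph{some} tensor, and by the involution structure (Lemma \ref{keylemma}) every cycle-free $d$-partition is connected to the standard example $E_d=(\Omega_1^{(d)},\dots,\Omega_d^{(d)})$ via a sequence of the moves $(x,y,z)$. The strategy is to show that each such move changes $det^{S^2}(f_{(\Gamma_1,\dots,\Gamma_d)})$ only by a nonzero scalar (this is exactly the content of the relation between the involutions and the linear relations satisfied by $det^{S^2}$), so that nonvanishing propagates from $E_d$ to every element of $\mathcal{P}_d^{cf}(K_{2d})$. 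One would first verify $det^{S^2}(f_{E_d})\neq 0$ directly on the twin-star partition, then transport nonvanishing across involutions.

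The main obstacle is the converse direction and specifically controlling how $det^{S^2}$ behaves under a single involution $(x,y,z)$: one must show the relevant $3\times 3$ (or larger) linear relation governing the three edge-factors is nondegenerate, so that passing to the unique other cycle-free repartition multiplies the value by a nonzero constant rather than possibly sending it to zero. Proving that the standard twin-star tensor $f_{E_d}$ is not annihilated---rather than relying on a black-box nontriviality from \cite{dets2}---would be the delicate computational heart, and I expect it to require the detailed definition of $det^{S^2}$ as an antisymmetrized summation over $S^2$-type index data, which is exactly the ingredient this subsection declines to reproduce.
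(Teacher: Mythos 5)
There is no proof in the paper to compare against: this theorem is quoted from \cite{dets2}, and the subsection containing it states explicitly that its results are not needed in the paper and are included only as motivation. The substantive question is therefore whether your outline could stand on its own, and it cannot. For the vanishing direction, you correctly observe that the defining property of $det^{S^2}$ only kills tensors containing a \emph{monochromatic triangle}, and that a cycle in some $\Gamma_k$ need not produce one (the chords of a shortest cycle lie in other members of the partition, so the reduction fails). At that point your argument stops: vanishing of $det^{S^2}$ on tensors coming from partitions with longer monochromatic cycles is a property of the particular map constructed in \cite{dets2}, not a formal consequence of the defining condition, and ``presumably built so that it vanishes'' is an appeal to the very statement being proved.

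The nonvanishing direction has a more serious, structural problem: it is circular. The claim that every element of $\mathcal{P}_d^{cf}(K_{2d})$ is connected to $E_d$ by a sequence of involutions is \emph{not} Lemma \ref{keylemma}; it is exactly the transitivity of the $G_d$-action, i.e.\ Conjecture \ref{mainconj}, the open problem this paper is devoted to (settled only for $d\leq 4$), and the sign relation $\varepsilon^{S^2}_d(\mathcal{P}^{(x,y,z)})=-\varepsilon^{S^2}_d(\mathcal{P})$ you want to use is stated in the paper only as a consequence of that conjecture. Even granting connectivity, propagating nonvanishing across one involution requires the multilinear relation obtained by polarizing $det^{S^2}$ in the three slots $(x,y)$, $(x,z)$, $(y,z)$; to reduce that relation to the two cycle-free repartitions $\mathcal{P}$ and $\mathcal{P}^{(x,y,z)}$ you must already know that $det^{S^2}$ kills all the other, non-cycle-free, repartitions of those three edges --- that is, you need the first direction, which you have not established. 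Finally, the base case $det^{S^2}(f_{E_d})\neq 0$ is deferred as the ``computational heart.'' Since the defining property plus abstract nontriviality does not determine the map up to scalar (that indeterminacy is precisely why Conjecture \ref{mainconj} is open), no argument at this level of generality can succeed; a correct proof must engage the explicit construction of $det^{S^2}$ given in \cite{dets2}.
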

\begin{remark}
Finally, it follows from results in \cite{edge} that if the conjecture is true then the map $det^{S^2}$ can be written as
\begin{equation}
det^{S^2}(\otimes_{1\leq i<j\leq 2d}(v_{i,j}))=\sum_{(\Gamma_1,\dots,\Gamma_d)\in \mathcal{P}^{cf}_d(K_{2d})} \varepsilon^{S^2}_d((\Gamma_1,\dots,\Gamma_d)) \prod_{t=1}^d(\prod_{(i,j)\in \Gamma_t}v_{i,j}^t),\label{for1}
\end{equation}
 where the sum is taken over the set $\mathcal{P}^{h,cf}_d(K_{2d})$ of homogeneous, cycle-free, edge $d$-partitions of the complete graph $K_{2d}$,  $v_{i,j}=\sum_{t=1}^dv_{i,j}^te_t$, and $\varepsilon^{S^2}_d:\mathcal{P}^{cf}_d(K_{2d})\to \{-1,1\}$ is determined (up to a sign) by the fact that $\varepsilon^{S^2}_d((\Gamma_1,\dots,\Gamma_d)^{(x,y,z)})=-\varepsilon^{S^2}_d((\Gamma_1,\dots,\Gamma_d))$ for all $1\leq x<y<z\leq 2d$. One should notice  the similitude with the formula of the determinant.
\end{remark}

\section{Equivalence relations on the set of cycle-free $d$-partitions of $K_{2d}$}
\label{section2}

In this section we state the main conjecture concerning this paper. We also introduce two equivalence relations on the set $\mathcal{P}_d^{cf}(K_{2d})$  and discuss some of their properties.

\subsection{Involution Equivalence} First recall from Lemma \ref{omegad} that the set $\mathcal{P}_d^{cf}(K_{2d})$ is not empty.
\begin{definition} Let $\mathcal{P}=(\Gamma_1,\dots,\Gamma_d)$ and $\mathcal{Q}=(\Theta_1,\dots,\Theta_d)\in \mathcal{P}_d^{cf}(K_{2d})$ be two cycle-free $d$-partitions of $K_{2d}$. We say that they are {\it involution equivalent} if there exist $n\in \mathbb{N}$, cycle-free $d$-partitions $\mathcal{P}_i\in \mathcal{P}_d^{cf}(K_{2d})$ for $0\leq i\leq n$, and $1\leq x_i<y_i<z_i\leq 2d$ for $1\leq i\leq n$ such that
$$\mathcal{P}_0=\mathcal{P}, ~~~\mathcal{P}_n=\mathcal{Q},~~ {\rm and}  ~~\mathcal{P}_i^{(x_i,y_i,z_i)}=\mathcal{P}_{i-1} ~~{\rm for ~~all} ~~1\leq i\leq n.$$
\end{definition}

The results from \cite{edge,dets2} show that the uniqueness (up to a constant)  of the map $det^{S^2}$ is equivalent with the following purely combinatorial conjecture.
\begin{conjecture} Any two cycle-free $d$-partitions of $K_{2d}$ are involution equivalent. \label{mainconj}
\end{conjecture}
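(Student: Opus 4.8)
The plan is to prove the stronger statement that every cycle-free $d$-partition of $K_{2d}$ is involution equivalent to the single fixed partition $E_d=(\Omega_1^{(d)},\dots,\Omega_d^{(d)})$ of Lemma \ref{omegad}, all of whose components are twin-stars. Since involution equivalence is an equivalence relation, once every partition is shown equivalent to $E_d$, Conjecture \ref{mainconj} follows at once. I would argue by induction on $d$, the case $d=1$ being trivial since $K_2$ has a single edge and $\mathcal{P}_1^{cf}(K_2)$ is a singleton. Reformulated in terms of the group $G_d$ of Definition \ref{defGd}, this is just the transitivity of the $G_d$-action, with $E_d$ chosen as a convenient base point.

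For the inductive step, the strategy splits into two tasks. First, using the involutions $(x,y,z)$ provided by Lemma \ref{keylemma}, I would move $(\Gamma_1,\dots,\Gamma_d)$ to a partition in which at least one component, say $\Gamma_d$, is isomorphic to the twin-star $TS_d$; denote its two centers by $a$ and $b$, so that $(a,b)\in E(\Gamma_d)$ and every other vertex is a leaf attached to exactly one of $a,b$. This is precisely the content of the twin-star hypothesis $TS(d)$, and it is the step I expect to be the genuine obstacle. There is no evident monovariant that decreases under a single involution and is minimized only at a twin-star, so producing one seems to demand either a global argument ranging over the isomorphism types of spanning trees of $K_{2d}$ (together with Theorem \ref{theI2d} to first normalize one component to the path $I_{2d}$) or a finite, machine-assisted verification, exactly as the paper carries out for small $d$.

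Granting a twin-star component, the second task is the reduction to $d-1$. The idea is to delete the two centers $a,b$ and pass to $K_{2d-2}$ on the remaining $2d-2$ vertices. The difficulty is that the restrictions $\Gamma_i\setminus\{a,b\}$ for $i<d$ need not be spanning trees of $K_{2d-2}$: removing $a$ (of $\Gamma_i$-degree $d_a^{(i)}$) and $b$ from the tree $\Gamma_i$ deletes $d_a^{(i)}+d_b^{(i)}$ edges, so the restriction is a spanning tree exactly when both $a$ and $b$ are leaves of $\Gamma_i$. The global count $\sum_{i<d}d_a^{(i)}=(2d-1)-\deg_{\Gamma_d}(a)=d-1$ (and likewise for $b$) shows this is numerically consistent, but it is not automatic for each individual $\Gamma_i$. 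So before deleting I would first apply further involutions $(a,b,w)$ on triangles through the edge $(a,b)$ to normalize the local picture at the two centers, arranging that each $\Gamma_i$ with $i<d$ has both $a$ and $b$ as leaves. This normalization is where the auxiliary notion of weak equivalence and a reduction argument of the type behind Theorem \ref{mainth} enter; the technical heart is to show that the required involutions exist and can be performed without destroying the twin-star structure of $\Gamma_d$.

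Once the deletion yields a genuine cycle-free $(d-1)$-partition of $K_{2d-2}$, the inductive hypothesis moves it to the all-twin-star partition $E_{d-1}$, and lifting this chain of involutions back to $K_{2d}$ (leaving $\Gamma_d$ and the edges incident to $a,b$ untouched) brings $(\Gamma_1,\dots,\Gamma_d)$ to $E_d$, completing the induction; note that $E_d$ itself is compatible with this scheme, since in $E_d$ the centers of $\Omega_d^{(d)}$ are leaves of every other component. In summary, the two serious difficulties are, first, establishing the twin-star hypothesis $TS(d)$ — the step I expect to resist a uniform proof and to force either a structural case analysis or a computation — and, second, showing that a twin-star component can be leveraged, after normalizing the two centers, into a clean vertex deletion reducing the problem from $d$ to $d-1$.
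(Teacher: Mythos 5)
The statement you set out to prove is a \emph{conjecture} in the paper, and the paper itself does not prove it unconditionally: what it proves is Theorem \ref{mainth} (if the twin-star hypothesis $\mathcal{TS}(i)$ holds for all $1\leq i\leq d$, then Conjecture \ref{mainconj} holds for $d$), together with a verification of $\mathcal{TS}(4)$ by a combinatorial reduction through Theorem \ref{theI2d} and a MATLAB computation. Your proposal is, in outline, exactly that reduction: induct on $d$, use $\mathcal{TS}(d)$ to produce a twin-star component, delete its two centers, apply the inductive hypothesis on $K_{2d-2}$, and lift back. Since you explicitly leave $\mathcal{TS}(d)$ unproved --- and it is genuinely open for $d\geq 5$ --- your proposal does not prove the conjecture; it rediscovers the paper's conditional Theorem \ref{mainth}. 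That is the principal gap, and it is the same gap the paper acknowledges by calling the statement a conjecture.

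Within the reduction itself two points differ from the paper, one in your favor to simplify and one a real omission. First, your worry that the centers $a,b$ need not be leaves of each $\Gamma_i$, and your plan to force this with involutions $(a,b,w)$, is unnecessary: since any cycle-free $d$-partition of $K_{2d}$ is automatically homogeneous, each $\Gamma_i$ is a spanning tree, so every vertex has degree at least $1$ in every component; as $\sum_{i<d}\deg_{\Gamma_i}(a)=(2d-1)-d=d-1$ is distributed over $d-1$ components, each degree equals exactly $1$. This is the paper's Lemma \ref{lemmagd}, proved by counting, with no involutions needed. Second, and more seriously, your closing claim that lifting the inductive chain of involutions ``brings $(\Gamma_1,\dots,\Gamma_d)$ to $E_d$'' is too quick. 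After the lift you only know that the partition restricts to $E_{d-1}$ on $K_{2d-2}$ and that $\Gamma_d$ is a twin-star; the $2(d-1)$ leaf edges at the two centers still lie in arbitrary components and attach to arbitrary vertices, and moreover $\mathcal{TS}(d)$ only yields \emph{weak} equivalence, so a relabeling by $S_{2d}\times S_d$ is still in play. Closing these two gaps is precisely the content of the paper's Lemmas \ref{lemmaxys} and \ref{lemmare2d} (swapping center edges between components via involutions of the form $(a,b,2d)$, using Remark \ref{remlocal} to keep everything else fixed) and of Lemmas \ref{lemma3} and \ref{lemma3eiw} (the $S_{2d}\times S_d$-orbit of $E_d$ lies inside its $G_d$-orbit, so weak equivalence to $E_d$ upgrades to involution equivalence). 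These are substantive steps, not bookkeeping, and any complete write-up of your plan would have to supply them.
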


\begin{remark} It is known from \cite{edge} that the conjecture is true for $d=2$ and $d=3$. The main result of this paper is showing that  Conjecture \ref{mainconj} is implied by the so called twin-star hypothesis.  We will also check that the conjecture is true for $d=4$.
\end{remark}

\begin{remark} \label{remGd} Recall from Definition \ref{defGd} that the group $G_d$ is generated by the involutions  $(x,y,z):\mathcal{P}_d^{cf}(K_{2d})\to \mathcal{P}_d^{cf}(K_{2d})$ for $1\leq x<y<z\leq 2d$. This means that  two cycle free $d$-partitions $\mathcal{P}=(\Gamma_1,\dots,\Gamma_d)$ and $\mathcal{Q}=(\Theta_1,\dots,\Theta_d)$ of $K_{2d}$ are involution equivalent if and only if there exists $g\in G_d$ such that $\mathcal{P}=\mathcal{Q}^g$. In particular, Conjecture \ref{mainconj} is equivalent with $G_d$ acting transitively on the set $\mathcal{P}_d^{cf}(K_{2d})$.
\end{remark}

\subsection{Weak Equivalence}
Before we prove the main result of this paper we need another equivalence relation on the set $\mathcal{P}_d^{cf}(K_{2d})$.

 Notice that there is an obvious action of the group $S_{2d}\times S_d$ on the set $\mathcal{P}_d^{cf}(K_{2d})$ induced by the action of the group $S_{2d}$ on the vertices of the graph $K_{2d}$, and the action of the group $S_d$ on the order of the subgraphs $\Gamma_i$. More precisely, we have.
\begin{definition} If $\sigma\in S_{2d}$,  and $(\Gamma_1,\dots, \Gamma_d)\in \mathcal{P}_d^{cf}(K_{2d})$ we define
$$\sigma\circ(\Gamma_1,\dots, \Gamma_d)=(\sigma \circ\Gamma_1,\dots, \sigma\circ\Gamma_d),$$
where for a subgraph $\Gamma$ of $K_{2d}$ with $E(\Gamma)=\{(i_1,j_1),...,(i_{2d-1},j_{2d-1})\}$ we take the edges of $\sigma\circ\Gamma$ to be
$E(\sigma\circ\Gamma)=\{(\sigma(i_1),\sigma(j_1)),...,(\sigma(i_{2d-1}),\sigma(j_{2d-1}))\}$.

For $\tau\in S_d$ we take
$$\tau \odot(\Gamma_1,\dots, \Gamma_d)=(\Gamma_{\tau^{-1}(1)},\dots,\Gamma_{\tau^{-1}(d)}).$$
\end{definition}

These two actions commute with each other and so we get an action of the group $S_{2d}\times S_d$ on $\mathcal{P}_d^{cf}(K_{2d})$.

\begin{remark} One can see that this action is transitive for $d=2$. For $d=3$ it was shown in \cite{edge} that there are $19$ equivalence classes for the action of $S_6\times S_3$ on the set $\mathcal{P}_3^{cf}(K_{6})$.
\end{remark}

The above actions are compatible as follows.
\begin{lemma} Let $(\Gamma_1,\dots,\Gamma_d)$ be a cycle-free $d$-partition of $K_{2d}$, $\sigma\in S_{2d}$, $\tau\in S_d$, and $1\leq x<y<z\leq 2d$. Then $$(\sigma,\tau)\cdot((\Gamma_1,\dots,\Gamma_d)^{(x,y,z)})=((\sigma,\tau)\cdot(\Gamma_{1},\dots,\Gamma_{d}))^{(\sigma(x),\sigma(y),\sigma(z))}.$$
\label{lemma4}
\end{lemma}
\begin{proof}
It follows directly form the definitions.
\end{proof}

The following technical result will be used in the next section. In order to make the paper easier to read, we postpone its proof to Appendix \ref{appendix1}.

\begin{lemma} Let $E_d\in \mathcal{P}_d^{cf}(K_{2d})$ be the cycle-free $d$-partition of $K_{2d}$ introduced in Lemma \ref{omegad}.
Then for every $(\sigma, \tau) \in S_{2d}\times S_d$ there exists an element $g\in G_d$ such that $$(\sigma, \tau)\cdot E_d= (E_d)^g.$$ \label{lemma3}
In other words $(\sigma, \tau)\cdot E_d$ is involution equivalent to $E_d$.
\end{lemma}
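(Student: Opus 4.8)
The plan is to reduce the statement to the two generating families of $S_{2d}\times S_d$ and to handle each with an explicit sequence of involutions. Since $G_d$ is a group (Remark \ref{remGd}), the set of pairs $(\sigma,\tau)$ for which $(\sigma,\tau)\cdot E_d$ is involution equivalent to $E_d$ is a subgroup of $S_{2d}\times S_d$: if $(\sigma,\tau)\cdot E_d=(E_d)^g$ and $(\sigma',\tau')\cdot E_d=(E_d)^{g'}$, then using Lemma \ref{lemma4} to commute the $G_d$-action past the $S_{2d}\times S_d$-action, one checks $(\sigma'\sigma,\tau'\tau)\cdot E_d$ is again of the form $(E_d)^{g''}$. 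Hence it suffices to verify the claim on a generating set of $S_{2d}\times S_d$, for which I would take the adjacent transpositions of $S_{2d}$ together with the adjacent transpositions of $S_d$.

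First I would dispose of the $S_d$-factor. A transposition $\tau=(a,a{+}1)\in S_d$ acts by swapping $\Omega_a^{(d)}$ and $\Omega_{a+1}^{(d)}$. Because both graphs are isomorphic to $TS_d$ and $E_d$ is a genuine partition, I expect that a short product of involutions $(x,y,z)$ supported on the four vertices $\{2a{-}1,2a,2a{+}1,2a{+}2\}$ implements this interchange; concretely, one tracks how the six edges inside $S_a\cup S_{a+1}$ are redistributed and verifies at each step that no cycle is created, so that Lemma \ref{keylemma} pins down the resulting partition uniquely. This is really the $d=2$ picture embedded on four vertices, where transitivity of $G_2$ is already known, so the needed word in the involutions can be read off.

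Next comes the $S_{2d}$-factor, the substantive part. For an adjacent transposition $\sigma=(v,v{+}1)\in S_{2d}$, applying $\sigma$ to $E_d$ produces a cycle-free partition differing from $E_d$ only in how edges incident to $v$ and $v{+}1$ are colored. The strategy is to write $\sigma\cdot E_d=(E_d)^g$ by choosing a sequence of triples $(x,y,z)$ that successively correct the recoloring one edge-class at a time; the explicit combinatorial recipe in the definition of $\Omega_t^{(d)}$—membership of $(i,j)$ is governed by the parity of $i+j$ and the blocks $S_a,S_b$—lets me predict exactly which edges change color under $\sigma$ and therefore which triples to apply. The verification that each intermediate $(x,y,z)$-move lands on the intended partition reduces, via Lemma \ref{keylemma}, to checking that exactly one of the two candidate rearrangements is cycle-free.

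The main obstacle I anticipate is the bookkeeping in the $S_{2d}$-step: I must exhibit an explicit word in the involutions and prove it has the claimed effect, which requires verifying cycle-freeness of every intermediate partition produced by Lemma \ref{keylemma}. The cleanest route is probably to first reduce to the case where $v$ and $v{+}1$ lie in the same block $S_a$ (so $\sigma$ merely swaps the two centers of a single $TS_d$), handled trivially, and the case where they straddle two blocks, which is the genuinely new computation. Organizing the argument so that the straddling case is built from a fixed small library of four- or six-vertex moves—whose correctness is checked once—would keep the proof finite and transparent rather than requiring a separate argument for each $d$.
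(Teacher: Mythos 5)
The genuine gap is in your treatment of the $S_d$-factor. Swapping the components $\Omega_a^{(d)}$ and $\Omega_{a+1}^{(d)}$ is not a local operation: it changes the color of all $2(2d-1)$ edges of these two twin-stars, and since each $\Omega_t^{(d)}$ is a spanning tree, those edges are incident to every vertex of $K_{2d}$. An involution $(x,y,z)$ with $x,y,z\in\{2a-1,2a,2a+1,2a+2\}$ alters only edges inside that four-point set, so no word in such involutions can realize the swap once $d\geq 3$; your ``$d=2$ picture embedded on four vertices'' accounts only for the six edges inside $S_a\cup S_{a+1}$ and says nothing about the $4(d-2)$ recolored edges incident to the remaining vertices. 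The paper disposes of this step by a symmetry observation rather than by involutions (Lemma \ref{lemmaSd}): because of how $E_d$ is built, $(c,c+1)\odot E_d$ is \emph{equal} to $\sigma\circ E_d$ for $\sigma=(2c-1,2c+1)(2c,2c+2)\in S_{2d}$, so the $S_d$-action on $E_d$ factors through the $S_{2d}$-action and the component swap needs no separate argument. Without this, or some substitute that recolors the edges of $\Omega_a^{(d)}\cup\Omega_{a+1}^{(d)}$ far from the four special vertices, your proof does not go through.

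Your reduction to generators and the subgroup argument via Lemma \ref{lemma4} are fine, and your plan for the $S_{2d}$-factor is essentially the paper's: for $\sigma=(2a-1,2a)$ the partitions $E_d$ and $\sigma\circ E_d$ differ only on the subgraphs $K_4(2a-1,2a,2x-1,2x)$, $x\neq a$, and for $\sigma=(2a,2a+1)$ only on the subgraphs $K_6(2x-1,2x,2a-1,2a,2a+1,2a+2)$; your ``library of moves checked once'' is exactly the known transitivity for $d=2$ and $d=3$ transported into $K_{2d}$ by the locality principle (Remark \ref{reKsubG}, Lemmas \ref{lemmaS2d1} and \ref{lemmaS2d2}). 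Two caveats, though. First, the same-block case is not ``trivial'': swapping $2a-1$ and $2a$ recolors edges in \emph{every} $K_4(2a-1,2a,2x-1,2x)$, so a local fix is needed for each $x\neq a$. Second, in the straddling case the various $K_6$'s all share the central $K_4(2a-1,2a,2a+1,2a+2)$, so corrections on different $K_6$'s interfere; the paper resolves this by routing through an auxiliary intermediate $3$-partition that agrees with the target on the central $K_4$, and your one-edge-at-a-time sketch does not address this point.
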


Combining the action of $S_{2d}\times S_d$ on $\mathcal{P}_d^{cf}(K_{2d})$ with involution equivalence  we get the following definition.

\begin{definition} Let $\mathcal{P}=(\Gamma_1,\dots,\Gamma_d)$ and $\mathcal{Q}=(\Theta_1,\dots,\Theta_d)$ be two cycle-free $d$-partitions of $K_{2d}$. We say that they are {\it weakly equivalent} if there exist $n\geq 0$, cycle-free $d$-partitions $\mathcal{P}_i$ for $0\leq i\leq n$, integers $1\leq x_i<y_i<z_i\leq 2d$ for $1\leq i\leq n$, and $(\sigma_i,\tau_i)\in S_{2d}\times S_d$ for $1\leq i\leq n$  such that
$$\mathcal{P}_0=\mathcal{P}, ~~~\mathcal{P}_n=\mathcal{Q},~~ {\rm and}  ~~(\sigma_i,\tau_i)\cdot (\mathcal{P}_i^{(x_i,y_i,z_i)})=\mathcal{P}_{i-1} ~~{\rm for ~~all} ~~1\leq i\leq n.$$
\end{definition}

\begin{remark} Let $\mathcal{P}=(\Gamma_1,\dots,\Gamma_d)$ and $\mathcal{Q}=(\Theta_1,\dots,\Theta_d)$ be two cycle-free $d$-partitions of $K_{2d}$.  From Remark \ref{remGd} and Lemma \ref{lemma4}   we get that $\mathcal{P}$ and $\mathcal{Q}$ are weakly  equivalent if and only if there exist  $g\in G_d$ and $(\sigma, \tau)\in S_{2d}\times S_d$ such that
$$\mathcal{P}=((\sigma, \tau)\cdot \mathcal{Q})^g.$$
\label{remwe}
\end{remark}

\begin{lemma}  \label{lemma3eiw} Let $\mathcal{P}=(\Gamma_1,\dots,\Gamma_d)$ and $\mathcal{Q}=(\Theta_1,\dots,\Theta_d)$ be two cycle-free $d$-partitions of $K_{2d}$.\\
1) If $\mathcal{P}$ and $\mathcal{Q}$ are involution equivalent then they are weakly equivalent.\\
2) If $\mathcal{P}$ is weakly equivalent to $E_d$ then  $\mathcal{P}$ is involution equivalent to $E_d$.
\end{lemma}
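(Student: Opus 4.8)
The plan is to prove both parts by passing to the group-theoretic reformulations of the two equivalence relations recorded in Remark \ref{remGd} (for involution equivalence) and Remark \ref{remwe} (for weak equivalence, which itself rests on the compatibility in Lemma \ref{lemma4}). These reformulations already collapse the chain definitions into single-element statements, so once we work with them both parts reduce to short manipulations inside $G_d$ and $S_{2d}\times S_d$, with no surviving combinatorial content.

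For part 1, I would observe that involution equivalence is literally the special case of weak equivalence in which every group element $(\sigma_i,\tau_i)$ is taken to be the identity. Concretely, by Remark \ref{remGd} the hypothesis supplies some $g\in G_d$ with $\mathcal{P}=\mathcal{Q}^g$, and rewriting this as $\mathcal{P}=((\mathrm{id},\mathrm{id})\cdot\mathcal{Q})^g$ exhibits exactly the data required by Remark \ref{remwe}. Hence $\mathcal{P}$ and $\mathcal{Q}$ are weakly equivalent.

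For part 2, the real leverage comes from Lemma \ref{lemma3}. By Remark \ref{remwe}, weak equivalence of $\mathcal{P}$ and $E_d$ furnishes $g\in G_d$ and $(\sigma,\tau)\in S_{2d}\times S_d$ with $\mathcal{P}=((\sigma,\tau)\cdot E_d)^g$. Lemma \ref{lemma3} then lets me absorb the $S_{2d}\times S_d$-translate of $E_d$ into an involution-translate: there is $g'\in G_d$ with $(\sigma,\tau)\cdot E_d=(E_d)^{g'}$. Substituting gives $\mathcal{P}=((E_d)^{g'})^g$, and since $G_d$ is a group this composite action is realized by a single element $h\in G_d$, so $\mathcal{P}=(E_d)^h$. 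By Remark \ref{remGd} this is precisely involution equivalence of $\mathcal{P}$ and $E_d$.

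I expect no genuine obstacle inside this lemma itself: once the chain definitions are replaced by the one-step characterizations, the argument is bookkeeping. The one point that genuinely matters is the \emph{special role of the base point $E_d$} in part 2 — the argument works exactly because Lemma \ref{lemma3} guarantees that the entire $S_{2d}\times S_d$-orbit of $E_d$ already lies inside its $G_d$-orbit, so the additional symmetry available in weak equivalence buys nothing new when the target is $E_d$. The statement would fail with $E_d$ replaced by an arbitrary $\mathcal{Q}$, and the real difficulty is therefore the one hidden in Lemma \ref{lemma3}, whose proof is deferred to Appendix \ref{appendix1}.
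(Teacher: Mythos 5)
Your proof is correct and follows exactly the paper's route: part 1 is the observation that involution equivalence is the special case of weak equivalence with trivial $(\sigma,\tau)$, and part 2 combines Remark \ref{remwe} with Lemma \ref{lemma3} to absorb the $S_{2d}\times S_d$-translate of $E_d$ into a $G_d$-translate. You have merely spelled out the bookkeeping the paper leaves implicit, and your closing observation about the special role of the base point $E_d$ accurately identifies where the real work (Lemma \ref{lemma3}) lives.
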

\begin{proof} The first statement is obvious.  The second statement is a consequence of Lemma  \ref{lemma3} and Remark \ref{remwe}.
\end{proof}

\begin{remark}
 Notice that if Conjecture \ref{mainconj} is true then $\mathcal{P}$ and  $\mathcal{Q}$ are involution equivalent if and only if they are weakly equivalent.
\end{remark}

\section{Twin-star Hypothesis}
\label{section3}
In this section we introduce the twin-star hypothesis and discus how it relates to Conjecture \ref{mainconj}.

\begin{definition} Let $d\in \mathbb{N}^*$. We say that the twin-star hypothesis $\mathcal{TS}(d)$ holds true if every cycle-free $d$-partition of $K_{2d}$ is weakly equivalent to a cycle-free $d$-partition $(\Gamma_1,\Gamma_2,\dots,\Gamma_d)$ with the property that  at least one of the graphs $\Gamma_i$ is isomorphic to the twin-star graph $TS_d$.
\end{definition}

Notice that if Conjecture \ref{mainconj} is true for a certain $d$ then the twin-star hypothesis is also true for that $d$. In particular, since Conjecture \ref{mainconj} is true is true for $d=2$, and $d=3$ we know that $\mathcal{TS}(2)$ and $\mathcal{TS}(3)$ hold true. The main result of this paper is to prove the converse of this statement.

For this we need a few lemmas.
\begin{lemma} Let $(\Gamma_1,\dots,\Gamma_d)$ be a cycle-free $d$-partition of $K_{2d}$ such that $\Gamma_d$ coincides with the graph $\Omega_d^{(d)}$. Then for each $1\leq i\leq d-1$ the graph $\Gamma_i$ has  exactly one edge  adjacent to the vertex $2d-1$, and exactly one edge  adjacent to the vertex $2d$.  \label{lemmagd}
\end{lemma}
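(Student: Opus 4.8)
The plan is to reduce the statement to an elementary degree count, the substance of which is the observation that every part of a cycle-free $d$-partition is a spanning tree. The one ingredient that does real work is the connectivity of each $\Gamma_i$; everything else is pigeonhole.

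First I would record the precise adjacency structure of $\Gamma_d=\Omega_d^{(d)}$, read off from Figure \ref{fig31} (and justified by Lemma \ref{omegad}): the two centers of the twin star are $2d-1$ and $2d$, with $2d$ adjacent exactly to the odd vertices $1,3,\dots,2d-1$, and $2d-1$ adjacent exactly to the even vertices $2,4,\dots,2d-2$ together with $2d$. Counting these neighbors gives $\deg_{\Gamma_d}(2d)=d$ and $\deg_{\Gamma_d}(2d-1)=(d-1)+1=d$.

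Next I would invoke the homogeneity already noted for cycle-free $d$-partitions: each $\Gamma_i$ is cycle-free, spans all $2d$ vertices, and carries exactly $2d-1$ edges. A forest on $2d$ vertices with $2d-1$ edges is connected, hence a spanning tree, so every vertex has degree at least $1$ in each $\Gamma_i$. In particular $\deg_{\Gamma_i}(2d-1)\geq 1$ and $\deg_{\Gamma_i}(2d)\geq 1$ for every $1\leq i\leq d-1$. Then the argument becomes pure counting. In $K_{2d}$ the vertex $2d-1$ has degree $2d-1$, of which $\Gamma_d$ uses $d$; the remaining $d-1$ edges incident to $2d-1$ are distributed among the $d-1$ trees $\Gamma_1,\dots,\Gamma_{d-1}$. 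Each of these trees requires at least one such edge, and there are exactly $d-1$ edges for $d-1$ trees, so each $\Gamma_i$ receives exactly one edge at $2d-1$. The identical count at vertex $2d$ (degree $2d-1$ in $K_{2d}$, of which $d$ lie in $\Gamma_d$, leaving $d-1$ for the $d-1$ remaining trees) completes the proof.

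I would flag that the only genuine content is the step asserting that each $\Gamma_i$ is connected: this is what converts the trivial upper bound (at most $d-1$ edges to distribute) into the required equality, by ruling out an isolated occurrence of $2d-1$ or $2d$ in some part. Once the spanning-tree observation is in place there is no further obstacle, since the distribution is forced by a $(d-1)$-into-$(d-1)$ pigeonhole with each slot nonempty.
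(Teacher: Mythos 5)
Your proof is correct, but it runs on a different engine than the paper's. The paper deletes the vertex $2d$ and works with the induced cycle-free $d$-partition $(\Theta_1,\dots,\Theta_d)$ of $K_{2d-1}$: since each $\Theta_i$, $1\leq i\leq d-1$, is a forest on $2d-1$ vertices it has at most $2d-2$ edges, and the global edge count $\sum_{i=1}^{d-1}|E(\Theta_i)|=(d-1)(2d-1)-(d-1)=(d-1)(2d-2)$ forces equality in every slot, so each $\Gamma_i$ loses exactly one edge when $2d$ is deleted. You instead observe that each $\Gamma_i$ is a spanning tree (cycle-free, $2d$ vertices, $2d-1$ edges by homogeneity, hence connected), so $\deg_{\Gamma_i}(2d)\geq 1$, and then the degree count $\sum_{i=1}^{d-1}\deg_{\Gamma_i}(2d)=(2d-1)-d=d-1$ pins each degree to exactly $1$. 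The two inequalities are in fact equivalent, since $|E(\Theta_i)|=(2d-1)-\deg_{\Gamma_i}(2d)$, so $|E(\Theta_i)|\leq 2d-2$ says precisely $\deg_{\Gamma_i}(2d)\geq 1$; but you derive it from connectivity of the parts, while the paper derives it from the forest edge bound on the vertex-deleted subgraphs. Your version is more local and slightly shorter, needing no induced partition of $K_{2d-1}$; the paper's vertex-deletion framing is not wasted effort, though, because the induced-partition machinery it sets up (cf. Remark \ref{remlocal}) is exactly what gets reused in Lemmas \ref{lemmaxys} and \ref{lemmare2d} and in the proof of Theorem \ref{mainth}. Both arguments rest on the same two preliminaries, which you handle correctly: homogeneity of cycle-free $d$-partitions, and $\deg_{\Gamma_d}(2d-1)=\deg_{\Gamma_d}(2d)=d$ for $\Gamma_d=\Omega_d^{(d)}$.
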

\begin{proof}
First notice that  $\Gamma_d=\Omega_d^{(d)}$ has $d$ edges adjacent to the vertex $2d$. So, when we remove the vertex $2d$ from $K_{2d}$ the cycle-free $d$-partition $(\Gamma_1,\dots,\Gamma_d)$ will induce  a cycle-free $d$-partition $(\Theta_1,\dots,\Theta_d)$ of $K_{2d-1}$ such that $\Theta_d$ has $d-1$ edges. On the other hand the graph $K_{2d-1}$ has $(d-1)(2d-1)$ edges, so
\begin{equation}\sum_{i=1}^{d-1}|E(\Theta_i)|=|E(K_{2d-1})|-|E(\Theta_d)|=(d-1)(2d-1)-(d-1)=(d-1)(2d-2).\label{eqtheta}
\end{equation}
Next, one can see that for each $1\leq i\leq d-1$ the graph $\Theta_i$ is cycle-free and has $2d-1$ vertices, so it cannot have more than $2d-2$ edges.  Combining this with Equation (\ref{eqtheta}) we get that $$|E(\Theta_i)|=2d-2,$$
for all $1\leq i\leq d-1$.

Finally, because $|E(\Gamma_{i})|=2d-1$, $E|(\Theta_i)|=2d-2$ and the graph $\Theta_i$ is obtained from $\Gamma_i$ by removing the vertex $2d$  we get that graph $\Gamma_i$ has exactly one edge  adjacent to the vertex $2d$.

A similar argument shows that for each $1\leq i\leq d-1$ the graph $\Gamma_i$ has exactly one edge  adjacent to the vertex $2d-1$.
\end{proof}




\begin{remark} Let $(\Gamma_1,\dots,\Gamma_d)$ be a cycle-free $d$-partition of $K_{2d}$ such that $\Gamma_d$ coincides with the graph $\Omega_d^{(d)}$. Take $K_{2d-2}$ be the complete graph obtained from $K_{2d}$ by removing vertices $2d-1$, $2d$ and all their adjacent edges. Then the restriction of $(\Gamma_1,\dots,\Gamma_d)$ to $K_{2d-2}$ is a cycle-free $(d-1)$-partition $(\Delta_1,\dots,\Delta_{d-1})$.

Indeed, if we remove the vertices $2d-1$ and $2d$ (and the adjacent edges) from the graph $K_{2d}$ we obtain the complete graph $K_{2d-2}$. Moreover the set of edges $E(\Gamma_d)$ intersects trivially the set of edges $E(K_{2d-2})$. So, the cycle-free $d$-partition $(\Gamma_1,\dots,\Gamma_d)$ of the graph $K_{2d}$ induces a cycle-free $(d-1)$-partition $(\Delta_1,\dots,\Delta_{d-1})$ of the complete graph $K_{2d-2}$ (where for $1\leq i\leq d-1$ the graph $\Delta_i$ is obtained from $\Gamma_i$ by removing the vertices $2d-1$ and  $2d$ and their adjacent edges).

Finally, notice that under the above assumption, if $1\leq x<y<z\leq 2d-2$ then $(\Gamma_1,\dots,\Gamma_d)^{(x,y,z)}$ is determined by $(\Delta_1,\dots,\Delta_{d-1})^{(x,y,z)}$ and the restriction of  $(\Gamma_1,\dots,\Gamma_d)$ to all the edges connected to vertices $2d-1$ and $2d$. For a more general situation see also Remark \ref{reKsubG}.
\label{remlocal}
\end{remark}

\begin{lemma} Let $(\Gamma_1,\dots,\Gamma_d)$ be a cycle-free $d$-partition of $K_{2d}$ such that $\Gamma_d$ coincide with the graph $\Omega_d^{(d)}$.  Assume that when we remove the vertices $2d-1$ and $2d$ from the graph $K_{2d}$ the cycle-free $d-1$-partition of $K_{2d-2}$induced by $(\Gamma_1,\dots,\Gamma_d)$ is involution equivalent to $E_{d-1}$.
Then for every  $1\leq a<b\leq 2d-2$ and $1\leq s\leq d-1$ there exists a cycle-free $d$-partition $(\Psi_1,\dots,\Psi_d)$ such that $(a,b)\in E(\Psi_s)$, and  $(\Gamma_1,\dots,\Gamma_d)$ is involution equivalent to $(\Psi_1,\dots,\Psi_d)$ via a sequence of involutions $(x,y,z)$ with $1\leq x<y<z\leq 2d-2$.  In particular $(\Gamma_1,\dots,\Gamma_d)$ and $(\Psi_1,\dots,\Psi_d)$ coincide on all the edges connected to vertices $2d-1$ or $2d$, and so $\Psi_d=\Omega_d^{(d)}$. \label{lemmaxys}
\end{lemma}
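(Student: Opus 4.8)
The plan is to transfer the whole problem to the smaller complete graph $K_{2d-2}$, solve it there using the hypothesis together with Lemma \ref{lemma3}, and then lift the resulting sequence of involutions back to $K_{2d}$. First I would record the structural consequences of $\Gamma_d=\Omega_d^{(d)}$. By Lemma \ref{lemmagd}, for each $1\leq i\leq d-1$ the graph $\Gamma_i$ meets the vertex $2d-1$ in exactly one edge and the vertex $2d$ in exactly one edge; removing the two vertices $2d-1,2d$ therefore yields, as in Remark \ref{remlocal}, an induced cycle-free $(d-1)$-partition $(\Delta_1,\dots,\Delta_{d-1})$ of $K_{2d-2}$, where $\Delta_i$ is $\Gamma_i$ with two pendant leaves deleted. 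The crucial point is the compatibility recorded in Remark \ref{remlocal}: for a triple $1\leq x<y<z\leq 2d-2$ none of the edges $(x,y),(x,z),(y,z)$ lies in $\Gamma_d=\Omega_d^{(d)}$ (all of whose edges touch $2d-1$ or $2d$), and deleting the two pendant leaves does not change connectivity among the vertices $\{1,\dots,2d-2\}$; hence the involution $(x,y,z)$ applied to $(\Gamma_1,\dots,\Gamma_d)$ is obtained by applying the involution $(x,y,z)$ to $(\Delta_1,\dots,\Delta_{d-1})$ and leaving all edges incident to $2d-1,2d$ (and in particular $\Gamma_d$) untouched. Consequently any sequence of involutions with triples in $\{1,\dots,2d-2\}$ acts on the full partition exactly through its action on the induced $(d-1)$-partition, preserving $\Gamma_d=\Omega_d^{(d)}$ and all pendant edges at every step. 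This reduces the statement to the following claim about $K_{2d-2}$: the partition $(\Delta_1,\dots,\Delta_{d-1})$ is involution equivalent (inside $K_{2d-2}$) to a cycle-free $(d-1)$-partition whose $s$-th graph contains $(a,b)$.

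To prove this claim I would use the hypothesis and Lemma \ref{lemma3}. By assumption $(\Delta_1,\dots,\Delta_{d-1})$ is involution equivalent to $E_{d-1}=(\Omega_1^{(d-1)},\dots,\Omega_{d-1}^{(d-1)})$. In $E_{d-1}$ the edge $(a,b)$ belongs to exactly one of the graphs, say $\Omega_t^{(d-1)}$ with $1\leq t\leq d-1$. Let $\tau\in S_{d-1}$ be the transposition $(s\,t)$ (or the identity if $s=t$); then the $s$-th component of $\tau\odot E_{d-1}$ is $\Omega_t^{(d-1)}$, so $(a,b)$ sits in the $s$-th graph of $\tau\odot E_{d-1}$. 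Applying Lemma \ref{lemma3} with $d-1$ in place of $d$ and $(\sigma,\tau)=(\mathrm{id},\tau)$ shows that $\tau\odot E_{d-1}$ is involution equivalent to $E_{d-1}$, hence to $(\Delta_1,\dots,\Delta_{d-1})$ by transitivity. This establishes the reduced claim with the explicit target $\tau\odot E_{d-1}$.

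Finally I would lift the sequence back. Each involution in the sequence realizing the equivalence of $(\Delta_1,\dots,\Delta_{d-1})$ and $\tau\odot E_{d-1}$ has its triple in $\{1,\dots,2d-2\}$, so by the compatibility from the first paragraph it lifts, step by step, to an involution of the full partition with the same triple; inductively $\Gamma_d=\Omega_d^{(d)}$ and the pendant edges are carried along unchanged. The resulting cycle-free $d$-partition $(\Psi_1,\dots,\Psi_d)$ restricts to $\tau\odot E_{d-1}$ on $K_{2d-2}$, keeps $\Psi_d=\Omega_d^{(d)}$, and agrees with $(\Gamma_1,\dots,\Gamma_d)$ on every edge incident to $2d-1$ or $2d$; since $(a,b)$ is an edge of $K_{2d-2}$ lying in the $s$-th graph of $\tau\odot E_{d-1}$, it lies in $E(\Psi_s)$, as required. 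The part of the argument needing the most care is this lifting: one must check that the involution chosen by Lemma \ref{keylemma} in $K_{2d}$ genuinely coincides with the one predicted by the $(d-1)$-dimensional computation at each step, which is exactly the content of Remark \ref{remlocal} and relies on $\Gamma_d$ remaining equal to $\Omega_d^{(d)}$ throughout.
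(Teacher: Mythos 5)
Your proposal is correct and follows essentially the same route as the paper's own proof: reduce to the induced $(d-1)$-partition of $K_{2d-2}$, use the hypothesis together with Lemma \ref{lemma3} (choosing $\tau\in S_{d-1}$ so that $(a,b)$ lands in the $s$-th component of $\tau\odot E_{d-1}$), and lift the resulting sequence of involutions back to $K_{2d}$ via the compatibility in Remark \ref{remlocal}. Your explicit choice $\tau=(s\,t)$ and the careful check that pendant edges at $2d-1$, $2d$ do not affect cycle-freeness merely spell out details the paper leaves implicit.
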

\begin{proof}
Our hypothesis states that the cycle-free $(d-1)$-partition $(\Delta_1,\dots,\Delta_{d-1})$ induced by $(\Gamma_1,\dots,\Gamma_d)$ on $K_{2d-2}$ is involution equivalent to
$E_{d-1}$ via a sequence of involutions $(x,y,z)$ with $1\leq x<y<z\leq 2d-2$. 
By Lemma \ref{lemma3} we know $E_{d-1}$ is involution equivalent to $\tau \odot E_{d-1}$ for any $\tau\in S_{d-1}$ (via a sequence of involutions $(x,y,z)$ with $1\leq x<y<z\leq 2d-2$). By choosing the appropriate $\tau\in S_{d-1}$ we may assume that the edge $(a,b)$ belongs to the $s$-th component of the partition $\tau\odot E_{d-1}$.

We take $(\Psi_1,\dots,\Psi_d)$ to be the $d$-partition of $K_{2d}$ that coincides with $(\Gamma_1,\dots, \Gamma_d)$ on all the edges connected to vertex $2d-1$ and $2d$, and such that the restriction of $(\Psi_1,\dots,\Psi_d)$ to $K_{2d-2}$ is the partition $\tau \odot E_{d-1}$. By Remark \ref{remlocal} we know that $(\Psi_1,\dots,\Psi_d)$  is involution equivalent to $(\Gamma_1,\dots, \Gamma_d)$ via a sequence of involutions $(x,y,z)$ with $1\leq x<y<z\leq 2d-2$. Moreover, from construction we have that the edge $(a,b)$ belongs to $E(\Psi_s)$.
\end{proof}

\begin{lemma}  Let $(\Gamma_1,\dots,\Gamma_d)$ be a cycle-free $d$-partition of $K_{2d}$ such that $\Gamma_d$ coincides with the graph $\Omega_d^{(d)}$. Assume  that when we remove the vertices $2d-1$ and $2d$ from the graph $K_{2d}$ the cycle-free $(d-1)$-partition of $K_{2d-2}$ induced by $(\Gamma_1,\dots,\Gamma_d)$ is involution equivalent to $E_{d-1}$.
Then there exists a cycle-free $d$-partition $(\Psi_1,\dots,\Psi_d)$ of $K_{2d}$ that is involution equivalent  to $(\Gamma_1,\dots,\Gamma_d)$ and has the property that the partition $(\Psi_1,\dots,\Psi_d)$ coincides with $E_d$ on all edges connected to the vertices $2d-1$ and $2d$.
\label{lemmare2d}

\end{lemma}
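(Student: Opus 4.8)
The plan is to transform $(\Gamma_1,\dots,\Gamma_d)$ into the desired $(\Psi_1,\dots,\Psi_d)$ by repairing, one at a time, first the edges incident to $2d-1$ and then those incident to $2d$, using only involutions that fix everything else. First I would record what the target looks like: in $E_d$ the graph $\Omega_d^{(d)}=\Gamma_d$ already accounts for $(2d-1,2d)$, all $(i,2d-1)$ with $i$ even, and all $(i,2d)$ with $i$ odd, while for each $1\le t\le d-1$ the component $\Omega_t^{(d)}$ contains exactly the two center edges $(2t-1,2d-1)$ and $(2t,2d)$. Since $\Gamma_d=\Omega_d^{(d)}$ is to be left untouched, the task reduces to rearranging, among $\Gamma_1,\dots,\Gamma_{d-1}$, the matching that sends each odd vertex $2c-1$ to the component carrying $(2c-1,2d-1)$, and likewise the matching of the even vertices via $2d$, so that both $(2t-1,2d-1)$ and $(2t,2d)$ land in $\Gamma_t$. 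By Lemma~\ref{lemmagd} each $\Gamma_i$ with $i<d$ carries exactly one edge at $2d-1$ and one at $2d$, so these are genuine bijections onto $\{1,\dots,d-1\}$, and it suffices to sort each to the identity.

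The key structural observation is that every $\Gamma_i$ with $i<d$ is a spanning tree of $K_{2d}$ (it is cycle-free with $2d-1$ edges on $2d$ vertices) in which, by Lemma~\ref{lemmagd}, both $2d-1$ and $2d$ are leaves; hence its restriction $\Gamma_i\cap K_{2d-2}$ is a spanning tree of $K_{2d-2}$, so any two vertices of $K_{2d-2}$ are joined by a path inside each $\Gamma_i\cap K_{2d-2}$. I will use this to realize an arbitrary transposition of the matching at $2d-1$. Suppose $(2a-1,2d-1)\in\Gamma_i$ and $(2b-1,2d-1)\in\Gamma_j$ with $i\ne j$. To interchange their components, first apply Lemma~\ref{lemmaxys} to move the $K_{2d-2}$-edge $(2a-1,2b-1)$ into $\Gamma_i$ (this uses only involutions $(x,y,z)$ with $x,y,z\le 2d-2$, so all center edges and $\Gamma_d$ stay fixed), and then apply the involution $(2a-1,2b-1,2d-1)$ of Lemma~\ref{keylemma}.

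The heart of the argument is verifying that this last involution performs exactly the intended swap, and this is the step I expect to be the main obstacle, since it is precisely what pins down the behaviour of the involution of Lemma~\ref{keylemma}. The involution only redistributes the three edges $(2a-1,2b-1)$, $(2a-1,2d-1)$, $(2b-1,2d-1)$ and must give the unique cycle-free outcome differing from the present one in at least two of them. I would argue by elimination using the spanning-tree fact. First, $(2a-1,2b-1)$ cannot leave $\Gamma_i$: moving it into any $\Gamma_k$ with $k<d$ closes a cycle, because $2a-1$ and $2b-1$ are already joined in the spanning tree $\Gamma_k\cap K_{2d-2}$, while moving it into $\Gamma_d=\Omega_d^{(d)}$ closes a cycle through the path joining $2a-1$ and $2b-1$ via the vertex $2d$ already present in $\Omega_d^{(d)}$. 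Since it stays in $\Gamma_i$ and the outcome must differ in at least two edges, both center edges must move. Now $(2a-1,2d-1)$ cannot go to $\Gamma_d$ (a cycle through $2d$ and $2d-1$ forms) nor to any $\Gamma_k$, $k\ne i,j$, that still holds its own edge at $2d-1$ (the vertex $2d-1$ would acquire degree two and the spanning tree closes a cycle); the only admissible destination is $\Gamma_j$, which is simultaneously losing its edge $(2b-1,2d-1)$. The same reasoning forces $(2b-1,2d-1)$ into $\Gamma_i$. Hence the outcome is exactly the transposition. Crucially, this involution fixes the only $K_{2d-2}$-edge among the three, so the restriction to $K_{2d-2}$ is unchanged; combined with Lemma~\ref{lemma3}, the hypothesis of Lemma~\ref{lemmaxys} persists and the move may be repeated.

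Since transpositions generate the symmetric group, composing such moves sorts the matching at $2d-1$ to the identity, so that $(2t-1,2d-1)\in\Gamma_t$ for all $t<d$. The symmetric construction with the even vertices and the involutions $(2a,2b,2d)$ sorts the matching at $2d$; because each such involution touches only edges incident to $2d$ together with a single edge of $K_{2d-2}$, it leaves every edge at $2d-1$ in place, so the two phases do not interfere, and the same spanning-tree analysis (now using the path through the vertex $2d-1$ inside $\Omega_d^{(d)}$ to kill the move into $\Gamma_d$) shows each transposition is realized. The resulting partition $(\Psi_1,\dots,\Psi_d)$ is involution equivalent to $(\Gamma_1,\dots,\Gamma_d)$, has $\Psi_d=\Omega_d^{(d)}$, and agrees with $E_d$ on every edge incident to $2d-1$ or $2d$, which is the assertion of the lemma.
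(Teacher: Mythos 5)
Your proposal is correct and follows essentially the same route as the paper: position the edge $(a,b)$ inside $\Gamma_i$ using Lemma~\ref{lemmaxys}, apply the involution $(a,b,2d-1)$ (resp.\ $(a,b,2d)$) and argue that it merely exchanges the two pendant edges between $\Gamma_i$ and $\Gamma_j$ while fixing everything else, then compose such transpositions to sort the matchings at $2d-1$ and $2d$ until they agree with $E_d$. The only difference is in how the swap is justified: the paper rules out the bad rearrangement by noting the $j$-th component would keep $2d-1$ edges but have no edge at vertex $2d$ (hence a cycle), whereas you eliminate every alternative destination using the spanning-tree/leaf structure of the components --- a somewhat more thorough write-up of the same step.
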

\begin{proof}

Let $1\leq a\neq b\leq 2d-2$ and $1\leq i<j\leq d-1$ such that $(a,2d)\in \Gamma_i$ and $(b,2d)\in \Gamma_j$. We claim  that up to a sequence of involutions $(x,y,z)$, we may switch the edge $(a,2d)\in \Gamma_i$ with the edge $(b,2d)\in \Gamma_j$ without changing our partition on any of the other edges adjacent to $2d-1$ or $2d$ (including the graph $\Gamma_d$).

Indeed, using Lemma \ref{lemmaxys} we may assume that up to a sequence of involutions $(x,y,z)$ with $1\leq x<y<z\leq 2d-2$, we have that  edge $(a,b)\in E(\Gamma_i)$. Notice that these involutions will not affect any of the edges connected to vertices $2d-1$ or $2d$.

Next, consider the action of the involution $(a,b,2d)$ on the $d$-partition $(\Gamma_1,\dots,\Gamma_d)$.  On the left side of Figure \ref{figc2}, we have the initial partition $(\Gamma_1,\dots,\Gamma_d)$ restricted to the edges $(a,b), (a,2d)\in E(\Gamma_i)$, and $(b,2d)\in E(\Gamma_j)$. We want to compute $(\Gamma_1,\dots,\Gamma_d)^{(a,b,2d)}$. Notice that the $d$-partition corresponding to the bottom right  graph in Figure \ref{figc2} has no edge adjacent to the vertex $2d$ in the $j$-th component of the partition, and so the $j$-th component of that partition will have a cycle. This means that the  graph in the top right graph in Figure \ref{figc2} represents the cycle-free $d$-partition $(\Gamma_1,\dots,\Gamma_d)^{(a,b,2d)}$. So, the action of involution $(a,b,2d)$ on $(\Gamma_1,\dots,\Gamma_d)$ will switch the edges $(a,2d)$ and $(b,2d)$ between the graph $\Gamma_i$ and $\Gamma_j$, and will not change any other edge connected to vertex $2d-1$ or $2d$, which proves our claim.

\begin{figure}[ht]
\centering
\begin{tikzpicture}
  [scale=0.8,auto=left]
  \node[shape=circle,draw=black,minimum size = 16pt,inner sep=0.5pt] (nx) at (0,-1) {$a$};
  \node[shape=circle,draw=black,minimum size = 16pt,inner sep=0.5pt] (ny) at (3,-1)  {$b$};
  \node[shape=circle,draw=black,minimum size = 16pt,inner sep=0.5pt] (nd) at (1.5,1.5)  {{\tiny $2d$}};

\node[shape=circle,minimum size = 24pt,inner sep=0.3pt] (m4) at (4.5,0) {{\large $\rightarrow$}};

    \path[line width=0.5mm,red] (nx) edge[bend left=0] node [above] {$\Gamma_i$} (ny);
		\path[line width=0.5mm,red] (nx) edge[bend left=0] node [left] {$\Gamma_i$} (nd);
	  \path[line width=0.5mm,blue] (ny) edge[bend left=0] node [right] {$\Gamma_j$} (nd);
	
	\node[shape=circle,draw=black,minimum size = 16pt,inner sep=0.5pt] (nx1) at (6,1) {$a$};
  \node[shape=circle,draw=black,minimum size = 16pt,inner sep=0.5pt] (ny1) at (9,1)  {$b$};
  \node[shape=circle,draw=black,minimum size = 16pt,inner sep=0.5pt] (nd1) at (7.5,3.5)  {{\tiny $2d$}};
	\node[shape=circle,minimum size = 24pt,inner sep=0.3pt] (m4) at (12,2.2) {{\large $: \;(\Gamma_1,\dots,\Gamma_d)^{(a,b,2d)}$}};
	
	\node[shape=circle,minimum size = 24pt,inner sep=0.3pt] (m4) at (7.5,0) {{\large $or$}};

    \path[line width=0.5mm,red] (nx1) edge[bend left=0] node [above] {$\Gamma_i$} (ny1);
		\path[line width=0.5mm,red] (ny1) edge[bend left=0] node [right] {$\Gamma_i$} (nd1);
	  \path[line width=0.5mm,blue] (nx1) edge[bend left=0] node [left] {$\Gamma_j$} (nd1);
		
	\node[shape=circle,draw=black,minimum size = 16pt,inner sep=0.5pt] (nx2) at (6,-3.5) {$a$};
  \node[shape=circle,draw=black,minimum size = 16pt,inner sep=0.5pt] (ny2) at (9,-3.5)  {$b$};
  \node[shape=circle,draw=black,minimum size = 16pt,inner sep=0.5pt] (nd2) at (7.5,-1)  {{\tiny $2d$}};

    \path[line width=0.5mm,blue] (ny2) edge[bend left=0] node [above] {$\Gamma_j$} (nx2);
		\path[line width=0.5mm,red] (ny2) edge[bend left=0] node [right] {$\Gamma_i$} (nd2);
	  \path[line width=0.5mm,red] (nx2) edge[bend left=0] node [left] {$\Gamma_i$} (nd2);
		\node[shape=circle,minimum size = 24pt,inner sep=0.3pt] (m4) at (11.5,-2.2) {{\large $:$ Not cycle-free }};


\end{tikzpicture}
\caption{Finding the action of the involution $(a,b,2d)$}\label{figc2}
\end{figure}
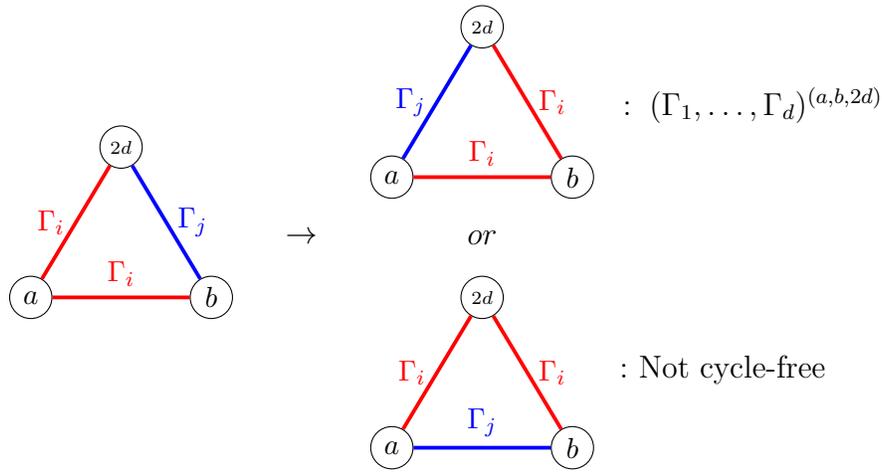

Similarly, if $1\leq a\neq b\leq 2d-2$ and $1\leq i<j\leq d-1$ such that $(a,2d-1)\in \Gamma_i$ and $(b,2d-1)\in \Gamma_j$, then up to a  sequence of involutions $(x,y,z)$, we may switch the edge $(a,2d-1)\in \Gamma_i$ with the edge $(b,2d-1)\in \Gamma_j$ without changing our partition on any of the other edges adjacent to $2d-1$ or $2d$ (including the graph $\Gamma_d$).

Next we will prove the existence of a cycle-free $d$-partition $(\Psi_1,\dots,\Psi_d)$ that is involution equivalent  to $(\Gamma_1,\dots,\Gamma_d)$ and has the property that the edges connected to the vertices $2d-1$ and $2d$ match the edges in the partition $E_d$. Indeed, we already know that $\Gamma_d$ coincides with with the graph $\Omega_d^{(d)}$. From Lemma \ref{lemmagd} we know that  for each $1\leq i\leq d-1$ there exist unique $1\leq u_i, v_i\leq 2d-2$ such that $(u_i,2d-1), (v_i,2d)\in E(\Gamma_i)$. Because the graph $\Omega_d^{(d)}$ accounts for all the edges of the form $(2t,2d-1)$ and $(2t-1,2d)$ it follows that $u_i$ is odd and $v_i$ is even.

Fix $1\leq i\leq d-1$ and let $(2a-1,2d-1)\in E(\Gamma_i)$. If $a\neq i$ then there exist $1\leq b\leq d-1$ such that $(2i-1,2d-1)\in E(\Gamma_b)$. From the above claim we know that up to involution equivalence (that does not change any of the other edges adjacent to $2d-1$ or $2d$) we may switch the edge $(2a-1,2d-1)\in E(\Gamma_i)$ with the edge $(2i-1,2d-1)\in E(\Gamma_b)$. This means that up to involution equivalence we may  assume  that  $(2i-1,2d-1)\in E(\Gamma_i)$  for all $1\leq i\leq d-1$.  A similar argument shows that  up to involution equivalence we may assume that $(2i,2d)\in E(\Gamma_i)$. In other words, up to involution equivalence the cycle-free $d$-partition $(\Gamma_1,\dots,\Gamma_d)$ coincides with $E_d$ on all edges connected to the vertices $2d-1$ and $2d$.
\end{proof}

We are now ready to prove the main result of this paper.

\begin{theorem} Suppose that the twin-star hypothesis $\mathcal{TS}(i)$ holds true for all $1\leq i\leq d$. Then Conjecture \ref{mainconj} holds true for $d$. \label{mainth}
\end{theorem}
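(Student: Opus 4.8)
The plan is to prove Theorem \ref{mainth} by induction on $d$, showing that every cycle-free $d$-partition of $K_{2d}$ is involution equivalent to the fixed partition $E_d$. Since involution equivalence is an equivalence relation, establishing that $E_d$ is a common representative for all of $\mathcal{P}_d^{cf}(K_{2d})$ is exactly Conjecture \ref{mainconj} for that $d$. The base cases $d=1,2,3$ are covered by the hypothesis $\mathcal{TS}(i)$ together with the fact that Conjecture \ref{mainconj} already holds in those small cases (equivalently, $\mathcal{TS}(i)$ is assumed for all $1\leq i\leq d$).

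For the inductive step, I would start from an arbitrary $(\Gamma_1,\dots,\Gamma_d)\in\mathcal{P}_d^{cf}(K_{2d})$ and invoke the twin-star hypothesis $\mathcal{TS}(d)$ to replace it, up to \emph{weak} equivalence, by a partition in which one component---say $\Gamma_d$---is isomorphic to $TS_d$. After applying a suitable element of $S_{2d}\times S_d$ (a relabeling of vertices and a reordering of the components), I may assume that this component is not merely isomorphic to $TS_d$ but literally coincides with $\Omega_d^{(d)}$, since $\Omega_d^{(d)}\cong TS_d$ by Lemma \ref{omegad} and the twin star of $\Omega_d^{(d)}$ has its two centers at the vertices $2d-1$ and $2d$. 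At this stage I would switch from weak equivalence back to involution equivalence using Lemma \ref{lemma3eiw}: it suffices to show the resulting partition is weakly equivalent to $E_d$, and then part (2) of that lemma upgrades this to involution equivalence with $E_d$.

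With $\Gamma_d=\Omega_d^{(d)}$ fixed, the key is to peel off the two vertices $2d-1$ and $2d$. Removing them (and their incident edges, all of which lie in $\Gamma_d$) leaves, by Remark \ref{remlocal}, an induced cycle-free $(d-1)$-partition $(\Delta_1,\dots,\Delta_{d-1})$ of $K_{2d-2}$. By the inductive hypothesis this smaller partition is involution equivalent to $E_{d-1}$, and because the vertices $2d-1,2d$ are untouched, every involution $(x,y,z)$ with $x<y<z\leq 2d-2$ lifts to $K_{2d}$ without disturbing $\Gamma_d$ (Remark \ref{remlocal}). This puts us exactly in the situation of Lemma \ref{lemmare2d}: its hypotheses are met, so there is a cycle-free $d$-partition $(\Psi_1,\dots,\Psi_d)$, involution equivalent to our current partition, that agrees with $E_d$ on all edges incident to $2d-1$ and $2d$. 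Since $\Psi_d$ then equals $\Omega_d^{(d)}$ and the restriction of $(\Psi_1,\dots,\Psi_d)$ to $K_{2d-2}$ is again involution equivalent to $E_{d-1}$ (by induction), one final application of the lift in Remark \ref{remlocal} carries this restriction to $E_{d-1}$ while fixing the last two vertices, yielding $(\Psi_1,\dots,\Psi_d)=E_d$.

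The main obstacle I anticipate is the transition from the twin-star hypothesis, which only provides \emph{weak} equivalence, to the \emph{involution} equivalence demanded by Conjecture \ref{mainconj}; this is precisely why Lemma \ref{lemma3} (and its consequence Lemma \ref{lemma3eiw}) is indispensable, since it certifies that the $S_{2d}\times S_d$-action on $E_d$ can be realized through involutions alone. A secondary subtlety is bookkeeping: one must be careful that all involutions used in the inductive step and in Lemma \ref{lemmare2d} act only on indices $\leq 2d-2$, so that the carefully arranged component $\Gamma_d=\Omega_d^{(d)}$ and the edges at $2d-1,2d$ are never disturbed until the very end. Once these two points are handled, the induction closes and Conjecture \ref{mainconj} follows for $d$.
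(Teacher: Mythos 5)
Your proposal is correct and follows essentially the same route as the paper's own proof: induction on $d$, invoking $\mathcal{TS}(d)$ to reach (up to weak equivalence) a partition whose last component is $\Omega_d^{(d)}$, applying the induction hypothesis to the induced $(d-1)$-partition of $K_{2d-2}$, using Lemma \ref{lemmare2d} and Remark \ref{remlocal} to match $E_d$ on the edges at vertices $2d-1$ and $2d$, and finally upgrading weak equivalence to involution equivalence via Lemma \ref{lemma3eiw}. The only cosmetic difference is your explicit emphasis on the bookkeeping that all intermediate involutions use indices at most $2d-2$, which the paper handles implicitly through Remark \ref{remlocal} and Lemma \ref{lemmaxys}.
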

\begin{proof} We will prove the statement by induction. Because Conjecture \ref{mainconj} is true for $d=2$ we have the first step of induction.

Assume that the statement is true for all integers smaller or equal to $d-1$. Take $(\Gamma_1,\dots,\Gamma_d)$ a cycle-free $d$-partition of $K_{2d}$. Since we assume  the twin-star hypothesis $\mathcal{TS}(d)$ holds true, we know that $(\Gamma_1,\dots,\Gamma_d)$ is weakly equivalence to $(\Psi_1,\dots,\Psi_d)$ such that the graph $\Psi_d$ is the twin-star graph $TS_d$ with the labeling of the graph $\Omega_d^{(d)}$ from  Figure \ref{fig31} (i.e. the last graph of the  $d$-partition  $E_d$).


If we remove the vertices $2d-1$ and $2d$ from the  graph $K_{2d}$ the cycle-free $d$-partition $(\Psi_1,\dots,\Psi_d)$ of $K_{2d}$ will induced a cycle-free $(d-1)$-partition $(\Delta_1,\dots,\Delta_{d-1})$ of the complete graph $K_{2d-2}$. From the induction hypothesis we know that the cycle free $(d-1)$-partition $(\Delta_1,\dots,\Delta_{d-1})$ is involution equivalent to $E_{d-1}$.
From Lemma \ref{lemmare2d} we know that up to involution equivalence to the partition $(\Psi_1,\dots,\Psi_d)$ we can rearrange the edges connected to the vertex $2d-1$ and $2d$  such that they coincide with those of the partition $E_d$.

Moreover, notice that if we start with the cycle-free $d$-partition $E_d$ and remove the vertices $2d-1$ and $2d$ from the graph $K_{2d}$ then the induced cycle-free $(d-1)$-partition of $K_{2d-2}$ is  the cycle-free $(d-1)$-partition $E_{d-1}$. To summarize, the two partitions $(\Psi_1,\dots,\Psi_d)$ and $E_d$ coincide on the edges connected to vertices $2d-1$ and $2d$, and their restrictions to $K_{2d-2}$ are involution equivalent. So, from Remark \ref{remlocal} we get that $(\Psi_1,\dots,\Psi_d)$ is involution equivalent to $E_d$, which implies that $(\Gamma_1,\dots,\Gamma_d)$ is weakly equivalent to $E_d$.  Finally, from Lemma \ref{lemma3eiw} we get that $(\Gamma_1,\dots,\Gamma_d)$ is involution  equivalent to $E_d$, proving our statement.
\end{proof}

Before we move to the case $d=4$, we will prove one more general result about the cycle-free $d$-partitions of $K_{2d}$. We will need the following definitions.
\begin{definition} If $\Gamma$ is a tree then the diameter of $\Gamma$ is the length of the longest path in $\Gamma$. If $\Gamma$ is a tree and $x\in V(\Gamma)$ we say that $x$ is a branch vertex of $\Gamma$ if the there are at least three distinct edges $(x,u)\in E(\Gamma)$ (i.e. the degree of $x$ is at least $3$).
\end{definition}

\begin{example} The line graph $I_{2d}$ from Figure \ref{fig301} has diameter $2d-1$, and it has no branch vertices. The twin-star graph $TS_{d}$ from Figure \ref{fig3} has diameter $3$, and for $d\geq 3$ the only branch vertices are $1$ and $2$.
\end{example}

\begin{theorem} Let $(\Gamma_1,\dots, \Gamma_d)$ a cycle-free $d$-partition of $K_{2d}$. Then there exists $(\Upsilon_1,\dots,\Upsilon_d)$ a cycle-free $d$-partition of $K_{2d}$ that is involution equivalent to $(\Gamma_1,\dots, \Gamma_d)$ and the graph $\Upsilon_d$ is isomorphic to the path graph $I_{2d}$. \label{theI2d}
\end{theorem}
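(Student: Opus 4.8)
The plan is to run an extremal argument on the diameter of the last component. Among all cycle-free $d$-partitions that are involution equivalent to $(\Gamma_1,\dots,\Gamma_d)$, I would choose one $(\Upsilon_1,\dots,\Upsilon_d)$ for which the diameter of $\Upsilon_d$ is as large as possible, and call this maximal diameter $k$. Recall that every component of a cycle-free $d$-partition of $K_{2d}$ is a spanning tree with exactly $2d-1$ edges (this is the homogeneity observation in the preliminaries). Hence a tree on $2d$ vertices of diameter $2d-1$ is a Hamiltonian path and therefore isomorphic to $I_{2d}$, so it suffices to prove $k=2d-1$. I would argue by contradiction, assuming $k<2d-1$.

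Fix a longest path $P=p_0p_1\cdots p_k$ in $\Upsilon_d$; its endpoints $p_0,p_k$ are necessarily leaves of $\Upsilon_d$. Since $k<2d-1$, the path $P$ is not spanning, so by connectivity of the tree $\Upsilon_d$ there is a vertex $w\notin P$ adjacent to some path vertex $p_i$ with $0<i<k$. The engine of the proof is the involution $(p_{i-1},p_i,w)$ applied to the triangle on $\{p_{i-1},p_i,w\}$, whose edges $(p_{i-1},p_i)$ and $(p_i,w)$ lie in $\Upsilon_d$ while $(p_{i-1},w)$ lies in another component. The decisive point is a counting constraint: the involution alters only these three edges, and since every component must remain a spanning tree with $2d-1$ edges, $\Upsilon_d$ must retain \emph{exactly two} of the three triangle edges. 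It cannot retain the original pair $\{(p_{i-1},p_i),(p_i,w)\}$, since that would change fewer than two edges, contradicting the conclusion of Lemma \ref{keylemma}. Therefore, after the move, $\Upsilon_d$ contains either $\{(p_{i-1},p_i),(p_{i-1},w)\}$ -- a \emph{slide} that merely reattaches $w$ (together with whatever subtree hangs from it) from $p_i$ to $p_{i-1}$ while leaving $P$ intact of length $k$ -- or $\{(p_i,w),(p_{i-1},w)\}$ -- an \emph{extension} that inserts $w$ into the path to produce $p_0\cdots p_{i-1}\,w\,p_i\cdots p_k$ of length $k+1$. If the extension occurs, the diameter becomes at least $k+1$, contradicting maximality. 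If the slide occurs, I replace the partition by the new (involution-equivalent) one; now $w$ is attached at $p_{i-1}$ and the attachment index $i$ has strictly decreased, so I iterate. Since $i$ decreases at each forced slide, I eventually reach $i=1$, where the triangle $\{p_0,p_1,w\}$ has $(p_0,p_1),(p_1,w)\in\Upsilon_d$, and the same counting argument forces $\Upsilon_d$ to keep either $\{(p_0,p_1),(p_0,w)\}$, giving the path $w\,p_0\,p_1\cdots p_k$, or $\{(p_1,w),(p_0,w)\}$, giving $p_0\,w\,p_1\cdots p_k$; both have length $k+1$, producing the final contradiction.

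The main obstacle I anticipate is that Lemma \ref{keylemma} does not let me choose which of the two reconfigurations the involution performs: whether a given triangle move slides or extends is dictated by the global structure of the other component containing the edge $(p_{i-1},w)$, which I cannot control locally. The device that circumvents this is precisely the monotone descent of the attachment index $i$ under forced slides -- I never need to know which move happens, only that a slide strictly decreases $i$ while an extension finishes the argument immediately, and that at $i=1$ \emph{both} admissible reconfigurations are extensions. The one step I would be careful to justify in full is the edge-counting claim that $\Upsilon_d$ must keep exactly two triangle edges (ruling out any reconfiguration that drops $\Upsilon_d$ to one or zero of them), since this is exactly where homogeneity, and hence the extremal nature of the case $K_{2d}$, is used; everything else is routine verification that the slide preserves $P$ and keeps $w$ adjacent to the path so the induction on $i$ can proceed.
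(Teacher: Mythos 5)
Your proposal is correct and takes essentially the same approach as the paper's proof: you use the identical triangle involution (a path edge at the attachment vertex, the branch edge, and the cross edge lying in another component), with the identical dichotomy that the move either inserts $w$ into the path and lengthens it, or slides the branch one step toward an endpoint, where at the endpoint both outcomes lengthen the path. The differences are only presentational: you phrase termination as an extremal/contradiction argument on the maximal diameter and justify the two-outcome dichotomy by edge-counting from homogeneity, whereas the paper runs the same monotone iteration directly (increase the diameter, or move the branch vertex closest to $v_t$ one step closer) and reads the dichotomy off Lemma \ref{keylemma} and its figures.
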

\begin{proof} First notice that since $\Gamma_d$ is a tree with $2d-1$ edges if the length of its diameter is $2d-1$ then $\Gamma_d$ is isomorphic to $I_{2d}$, which is what we want.

So let's assume that the length of the diameter of $\Gamma_d$ is  $t<2d-1$. Take a path $(v_0,v_1,\dots,v_t)$ that is a diameter of the tree $\Gamma_d$, and consider a branch vertex $v_a$ along this diameter (such a vertex exists since $t<2d-1$). This means that $0<a<t$ and there exist a vertex $w\notin \{v_0,v_1,\dots,v_t\}$ such that $(v_a,w)\in E(\Gamma_d)$. We may assume that $v_a$ is the closes branch vertex to $v_t$ (i.e. $t-a$ is minimum among all possible branch vertices along the path $(v_0,v_1,\dots,v_t)$).  We will show that after applying a certain involution we can either increase the length of the diameter of $\Gamma_d$, or keep the same diameter but move the branch point closer to $v_t$.

Case I: If $t-a=1$, then $a=t-1$. Because $(v_0,v_1,\dots,v_{t-1},v_t)$ is a diameter in $\Gamma_d$  we must have that $(v_a,w)=(v_{t-1},w)$ is a leaf in $\Gamma_d$ (otherwise we would have a longer path in $\Gamma_d$). We can see from Figure \ref{figI1} that when we use the involution $(v_{t-1},v_t,w)$ on the cycle-free $d$-partition $(\Gamma_1,\dots, \Gamma_d)$ we create a cycle-free $d$-partition $$(\Theta_1,\dots,\Theta_d)=(\Gamma_1,\dots, \Gamma_d)^{(v_{t-1},v_t,w)},$$ with the property that $\Theta_d$ has the diameter of length  $t+1$.
Indeed, on the left hand side of Figure \ref{figI1} we have the graph $\Gamma_d$ that has the diameter $(v_0,v_1,\dots,v_{t-2},v_{t-1},v_t)$ with a branch point at $v_{t-1}$, and the edge $(w,v_t)\in E(\Gamma_j)$. After applying the involution $(v_{t-1},v_t,w)$ we have two options, either the top right graph $\Theta_d$ that has a  path $(v_0,v_1,\dots,v_{t-2},v_{t-1},v_t,w)$ of length $t+1$, or the bottom right graph $\Theta_d$ that has the path $(v_0,v_1,\dots,v_{t-2},v_{t-1},w,v_t)$ of length $t+1$. In both cases the diameter of $\Theta_d$ has increased to  $t+1$.
\begin{figure}[H]
\centering
\begin{tikzpicture}
  [scale=0.7,auto=left]
	\node[] (nq) at (-3,0) {};
	\node[shape=circle,draw=black,minimum size = 24pt,inner sep=0.5pt] (nt) at (-2,0) {$ v_{t-2}$};
  \node[shape=circle,draw=black,minimum size = 24pt,inner sep=0.5pt] (nx) at (1,0) {$v_{t-1}$};
  \node[shape=circle,draw=black,minimum size = 24pt,inner sep=0.5pt] (ny) at (4,0)  {$v_t$};
  \node[shape=circle,draw=black,minimum size = 24pt,inner sep=0.5pt] (nd) at (2.5,2.5)  { $w$};

\node[shape=circle,minimum size = 24pt,inner sep=0.3pt] (m4) at (6.5,0) {{\large $\rightarrow$}};

		\path[line width=0.5mm,red] (nt) edge[bend left=0] node [left] {$\Gamma_d$} (nq);
		\path[line width=0.5mm,red] (nx) edge[bend left=0] node [above] {$\Gamma_d$} (nt);
    \path[line width=0.5mm,red] (nx) edge[bend left=0] node [above] {$\Gamma_d$} (ny);
		\path[line width=0.5mm,red] (nx) edge[bend left=0] node [left] {$\Gamma_d$} (nd);
	  \path[line width=0.5mm,blue] (ny) edge[bend left=0] node [right] {$\Gamma_j$} (nd);
			\node[] (nq1) at (8,2) {};
	\node[shape=circle,draw=black,minimum size = 24pt,inner sep=0.5pt] (nt1) at (9,2) {$ v_{t-2}$};
	\node[shape=circle,draw=black,minimum size = 24pt,inner sep=0.5pt] (nx1) at (12,2) {$ v_{t-1}$};
  \node[shape=circle,draw=black,minimum size = 24pt,inner sep=0.5pt] (ny1) at (15,2)  {$v_t$};
  \node[shape=circle,draw=black,minimum size = 24pt,inner sep=0.5pt] (nd1) at (13.5,4.5)  { $w$};

	\node[shape=circle,minimum size = 24pt,inner sep=0.3pt] (m4) at (10.5,0) {{\large $or$}};

		\path[line width=0.5mm,red] (nt1) edge[bend left=0] node [left] {$\Theta_d$} (nq1);
			\path[line width=0.5mm,red] (nx1) edge[bend left=0] node [above] {$\Theta_d$} (nt1);
    \path[line width=0.5mm,red] (nx1) edge[bend left=0] node [above] {$\Theta_d$} (ny1);
		\path[line width=0.5mm,red] (ny1) edge[bend left=0] node [right] {$\Theta_d$} (nd1);
	  \path[line width=0.5mm,blue] (nx1) edge[bend left=0] node [left] {$\Theta_j$} (nd1);
	\node[] (nq2) at (8,-3) {};
	\node[shape=circle,draw=black,minimum size = 24pt,inner sep=0.5pt] (nt2) at (9,-3) {$v_{t-2}$};
	\node[shape=circle,draw=black,minimum size = 24pt,inner sep=0.5pt] (nx2) at (12,-3) {$v_{t-1}$};
  \node[shape=circle,draw=black,minimum size = 24pt,inner sep=0.5pt] (ny2) at (15,-3)  {$v_t$};
  \node[shape=circle,draw=black,minimum size = 24pt,inner sep=0.5pt] (nd2) at (13.5,-0.5)  { $w$};

		\path[line width=0.5mm,red] (nt2) edge[bend left=0] node [left] {$\Theta_d$} (nq2);
		\path[line width=0.5mm,red] (nx2) edge[bend left=0] node [above] {$\Theta_d$} (nt2);
    \path[line width=0.5mm,blue] (ny2) edge[bend left=0] node [above] {$\Theta_j$} (nx2);
		\path[line width=0.5mm,red] (ny2) edge[bend left=0] node [right] {$\Theta_d$} (nd2);
	  \path[line width=0.5mm,red] (nx2) edge[bend left=0] node [left] {$\Theta_d$} (nd2);

\end{tikzpicture}
\caption{The action of the involution $(v_{t-1},v_t,w)$ on $(\Gamma_1,\dots,\Gamma_d)$}\label{figI1}
\end{figure}
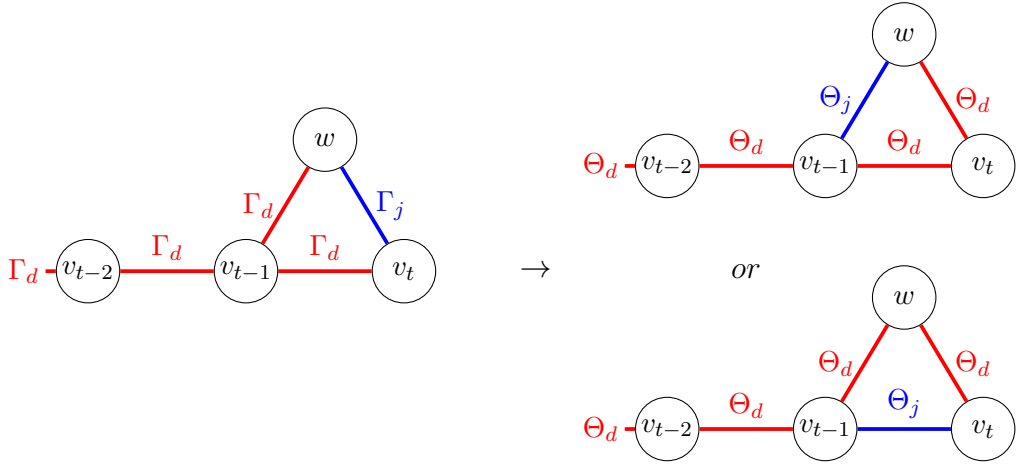

Case II: If $t-a=s\geq 2$, then $a+1<t$. We can see in Figure \ref{figI2} that after applying the involution $(v_a,v_{a+1},w)$ to the $d$-partition $(\Gamma_1,\dots, \Gamma_d)$ we create a cycle-free $d$-partition
$$(\Theta_1,\dots,\Theta_d)=(\Gamma_1,\dots, \Gamma_d)^{(v_{a},v_{a+1},w)},$$ such that $\Theta_d$ has diameter of length $t+1$, or $\Theta_d$ has diameter $t$ but the branch vertex is at distance $s-1$ from $v_t$. Indeed, on the left hand side of Figure \ref{figI2} we have the graph $\Gamma_d$ with the diameter $(v_0,v_1,\dots,v_{a},v_{a+1},\dots,v_t)$ that has a  branch point at $v_{a}$ which is at distance $s\geq 2$ from $v_t$, $(v_a,w)\in E(\Gamma_d)$ and  $(w,v_{a+1})\in E(\Gamma_j)$. After applying the involution $(v_a,v_{a+1},w)$  we have two options. The first one if the top right graph $\Theta_d$ from Figure \ref{figI2} that still contains the path $(v_0,v_1,\dots,v_{a-1},v_a,v_{a+1},\dots,v_t)$ of length $t$ but has a branch point at $v_{a+1}$ which is at distance $t-(a+1)=s-1$ to $v_t$. The second option is the bottom right graph $\Theta_d$ that has a path $(v_0,v_1,\dots,v_{a-1},v_{a},w,v_{a+1}\dots, v_{t})$ of length $t+1$.

And so, after applying the involution $(v_a,v_{a+1},w)$ either we increase the diameter of $\Theta_d$ to $t+1$, or we keep the length of the diameter of $\Theta_d$ at $t$, but move the branch point closer to $v_t$.
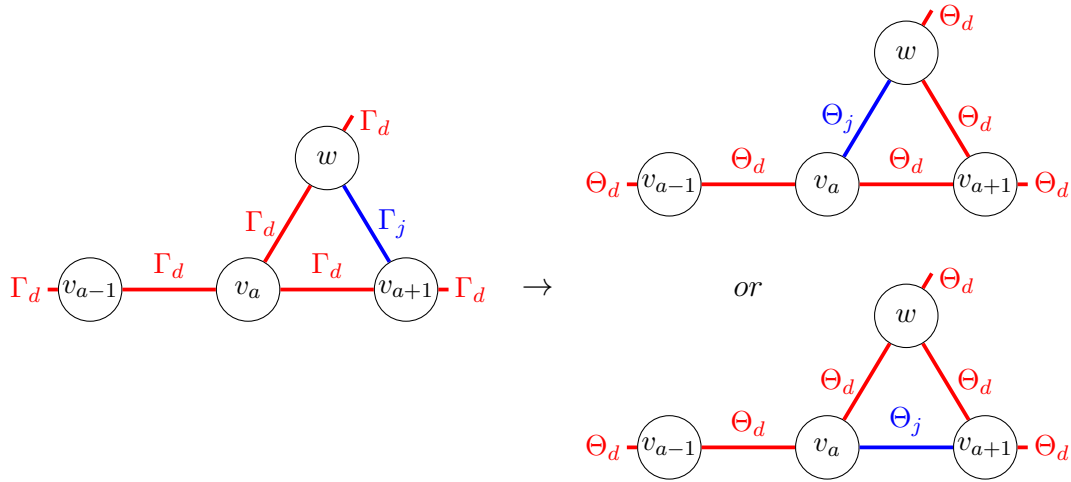
\begin{figure}[H]
\centering
\begin{tikzpicture}
  [scale=0.7,auto=left]
	\node[] (np) at (5,0) {};
	\node[] (nq) at (-3,0) {};
	\node[] (nr) at (3.1,3.5) {};
	\node[shape=circle,draw=black,minimum size = 24pt,inner sep=0.5pt] (nt) at (-2,0) {$ v_{a-1}$};
  \node[shape=circle,draw=black,minimum size = 24pt,inner sep=0.5pt] (nx) at (1,0) {$v_{a}$};
  \node[shape=circle,draw=black,minimum size = 24pt,inner sep=0.5pt] (ny) at (4,0)  {$v_{a+1}$};
  \node[shape=circle,draw=black,minimum size = 24pt,inner sep=0.5pt] (nd) at (2.5,2.5)  { $w$};

\node[shape=circle,minimum size = 24pt,inner sep=0.3pt] (m4) at (6.5,0) {{\large $\rightarrow$}};

		\path[line width=0.5mm,red] (ny) edge[bend left=0] node [right] {$\Gamma_d$} (np);
		\path[line width=0.5mm,red] (nd) edge[bend left=0] node [right] {$\Gamma_d$} (nr);
		\path[line width=0.5mm,red] (nt) edge[bend left=0] node [left] {$\Gamma_d$} (nq);
		\path[line width=0.5mm,red] (nx) edge[bend left=0] node [above] {$\Gamma_d$} (nt);
    \path[line width=0.5mm,red] (nx) edge[bend left=0] node [above] {$\Gamma_d$} (ny);
		\path[line width=0.5mm,red] (nx) edge[bend left=0] node [left] {$\Gamma_d$} (nd);
	  \path[line width=0.5mm,blue] (ny) edge[bend left=0] node [right] {$\Gamma_j$} (nd);
			\node[] (np1) at (16,2) {};
		  \node[] (nr1) at (14.1,5.5) {};
			\node[] (nq1) at (8,2) {};
	\node[shape=circle,draw=black,minimum size = 24pt,inner sep=0.5pt] (nt1) at (9,2) {$ v_{a-1}$};
	\node[shape=circle,draw=black,minimum size = 24pt,inner sep=0.5pt] (nx1) at (12,2) {$ v_{a}$};
  \node[shape=circle,draw=black,minimum size = 24pt,inner sep=0.5pt] (ny1) at (15,2)  {$v_{a+1}$};
  \node[shape=circle,draw=black,minimum size = 24pt,inner sep=0.5pt] (nd1) at (13.5,4.5)  { $w$};

	\node[shape=circle,minimum size = 24pt,inner sep=0.3pt] (m4) at (10.5,0) {{\large $or$}};

		\path[line width=0.5mm,red] (ny1) edge[bend left=0] node [right] {$\Theta_d$} (np1);
		\path[line width=0.5mm,red] (nt1) edge[bend left=0] node [left] {$\Theta_d$} (nq1);
			\path[line width=0.5mm,red] (nx1) edge[bend left=0] node [above] {$\Theta_d$} (nt1);
    \path[line width=0.5mm,red] (nx1) edge[bend left=0] node [above] {$\Theta_d$} (ny1);
		\path[line width=0.5mm,red] (ny1) edge[bend left=0] node [right] {$\Theta_d$} (nd1);
	  \path[line width=0.5mm,blue] (nx1) edge[bend left=0] node [left] {$\Theta_j$} (nd1);
		\path[line width=0.5mm,red] (nd1) edge[bend left=0] node [right] {$\Theta_d$} (nr1);
		\node[] (np2) at (16,-3) {};
		 \node[] (nr2) at (14.1,0.5) {};
	\node[] (nq2) at (8,-3) {};
	\node[shape=circle,draw=black,minimum size = 24pt,inner sep=0.5pt] (nt2) at (9,-3) {$v_{a-1}$};
	\node[shape=circle,draw=black,minimum size = 24pt,inner sep=0.5pt] (nx2) at (12,-3) {$v_{a}$};
  \node[shape=circle,draw=black,minimum size = 24pt,inner sep=0.5pt] (ny2) at (15,-3)  {$v_{a+1}$};
  \node[shape=circle,draw=black,minimum size = 24pt,inner sep=0.5pt] (nd2) at (13.5,-0.5)  { $w$};

		\path[line width=0.5mm,red] (ny2) edge[bend left=0] node [right] {$\Theta_d$} (np2);
		\path[line width=0.5mm,red] (nt2) edge[bend left=0] node [left] {$\Theta_d$} (nq2);
		\path[line width=0.5mm,red] (nx2) edge[bend left=0] node [above] {$\Theta_d$} (nt2);
    \path[line width=0.5mm,blue] (ny2) edge[bend left=0] node [above] {$\Theta_j$} (nx2);
		\path[line width=0.5mm,red] (ny2) edge[bend left=0] node [right] {$\Theta_d$} (nd2);
	  \path[line width=0.5mm,red] (nx2) edge[bend left=0] node [left] {$\Theta_d$} (nd2);
		\path[line width=0.5mm,red] (nd2) edge[bend left=0] node [right] {$\Theta_d$} (nr2);

\end{tikzpicture}
\caption{The action of the involution $(v_{a},v_{a+1},w)$}\label{figI2}
\end{figure}
Combining these two cases we get our statement.

\end{proof}

\begin{remark} Notice that $I_{2d}$ is the tree with the largest possible diameter in $K_{2d}$. One can easily show that the star graph $S_{2d}$ cannot appear as one of the graphs $\Gamma_i$ of a cycle-free $d$-partition $(\Gamma_1,\dots,\Gamma_d)$ of $K_{2d}$. This means that   the twin-star graph $TS_{d}$ has the smallest diameter among all possible graphs $\Gamma_i$ that can appear in a  cycle-free $d$-partition $(\Gamma_1,\dots,\Gamma_d)$ of $K_{2d}$. So, in a certain way, Theorem \ref{theI2d} is the opposite limit case of the twin-star hypothesis.
\end{remark}


\section{The case $d=4$}

\label{section4}

In this section we show that Conjecture \ref{mainconj} is true for $d=4$. In is known from \cite{h} that there are $23$ isomorphism types of trees with $8$ vertices. For the convenience of the reader and in order to have a consistent notation we list them in Appendix \ref{appendix2}.

The proof has two distinct parts. First, using combinatorial arguments we  show  that every cycle-free $4$-partition $(\Gamma_1,\Gamma_2,\Gamma_3,\Gamma_4)$ of the complete graph $K_8$ is weakly equivalent with a cycle-free $4$-partition  $(\Delta_1,\Delta_2,\Delta_3,\Delta_4)$ such that $\Delta_4$ is either the graph $T_{19}$, or the  twin star graph $TS_4=T_{21}$ (see Appendix \ref{appendix2}). Then, using MATLAB, we check that if $(\Gamma_1,\Gamma_2,\Gamma_3,\Gamma_4)$ is a cycle-free $4$-partition  of the complete graph $K_8$ such that $\Gamma_4=T_{19}$, then $(\Gamma_1,\Gamma_2,\Gamma_3,\Gamma_4)$ is weakly equivalent with a cycle-free $4$-partition  $(\Delta_1,\Delta_2,\Delta_3,\Delta_4)$ such that $\Delta_4$ is the twin star graph $TS_4$. This shows that the twin-star hypothesis $\mathcal{TS}(4)$ holds true, and so Conjecture \ref{mainconj} is true for $d=4$.

Recall from Theorem \ref{theI2d} that every cycle-free $4$-partition $(\Gamma_1,\Gamma_2,\Gamma_3,\Gamma_4)$ of the complete graph $K_8$ is weakly equivalent with a cycle-free $4$-partition  $(\Delta_1,\Delta_2,\Delta_3,\Delta_4)$ such that $\Delta_4$ is the path graph  $I_8$ (i.e. $T_1$ in Appendix \ref{appendix2}). We need the  following results.

\begin{lemma} \label{lemma161920}
Let $(\Gamma_1,\Gamma_2,\Gamma_3,\Gamma_4)$ be a cycle-free $4$-partition of the complete graph $K_8$  such that $\Gamma_4$ is the path graph  $I_8$. Then $(\Gamma_1,\Gamma_2,\Gamma_3,\Gamma_4)$ is weakly equivalent to a cycle-free $4$-partition $(\Delta_1,\Delta_2,\Delta_3,\Delta_4)$ such that $\Delta_4$ is one of the graphs $T_{16}, T_{19},$ or $T_{20}$ (see Appendix \ref{appendix2}).
\end{lemma}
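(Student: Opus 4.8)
The plan is to work throughout with a fixed labeling in which $\Gamma_4$ is the standard path $1-2-3-4-5-6-7-8$, which is legitimate because weak equivalence lets us relabel the vertices by any element of $S_8$ and permute the components by $S_4$. In this picture the remaining $21$ edges of $K_8$ are distributed among the three trees $\Gamma_1,\Gamma_2,\Gamma_3$, and the goal is to lower the diameter of the fourth component from $7$ down to the value realized by $T_{16},T_{19},T_{20}$ by repeatedly applying well-chosen involutions. This is the exact mirror of the diameter-increasing descent used to prove Theorem \ref{theI2d}.

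The basic move is a ``folding'' involution. Given three consecutive path vertices $x-y-z$, so that $(x,y),(y,z)\in E(\Gamma_4)$ while the chord $(x,z)$ lies in some $\Gamma_j$ with $j\neq 4$, Lemma \ref{keylemma} produces $(\Gamma_1,\dots,\Gamma_4)^{(x,y,z)}$. Since $\Gamma_4$ must remain a spanning tree on $8$ vertices with $7$ edges, its new fourth component is forced to contain exactly two of the three active edges including $(x,z)$, and the displaced path edge passes to $\Gamma_j$; thus the new $\Gamma_4$ is obtained from the old one by replacing one of the two path edges with the chord $(x,z)$. Exactly one of the two replacements leaves $\Gamma_4$ isomorphic to a path (no progress), while the other creates a branch vertex and lowers the diameter by one; which of the two actually occurs is dictated by the requirement that the modified $\Gamma_j$ stay cycle-free. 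I would catalogue, for each position of the triple along the path, the two candidate outcomes and record precisely when the diameter drops.

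Carrying this out, I would first fold near a leaf to pass from the diameter-$7$ path to a diameter-$6$ caterpillar, then iterate, at each stage choosing a triple whose chord sits in a component where the branched resolution is the cycle-free one. When folding in a given position fails to reduce the diameter — because the branched resolution would create a cycle in $\Gamma_j$ — I would use the freedom in weak equivalence to relabel and permute components so as to move an offending chord into a more favorable component, or simply switch to folding at another admissible triple. The whole argument is a finite descent on the diameter of $\Gamma_4$, and the trees $T_{16},T_{19},T_{20}$ are exactly the isomorphism types at which no further diameter-reducing fold is forced by the combinatorics alone.

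The hard part will be the bookkeeping. Because the outcome of each involution is governed by the cycle structure of the other three trees, I cannot fold at will; I must verify that at every intermediate isomorphism type of $\Gamma_4$ there is always some admissible triple whose folding resolution is the branched one, and that the only shapes immune to this are the three listed. Organizing this as a finite case analysis over the handful of tree shapes of each diameter (using that there are only $23$ types, listed in Appendix \ref{appendix2}), while tracking the constraints that the chords of $\Gamma_1,\Gamma_2,\Gamma_3$ impose, is where the real work lies; the payoff is that $T_{16},T_{19},T_{20}$ are precisely the shapes that survive the descent, from which the subsequent reduction to $TS_4$ will be completed.
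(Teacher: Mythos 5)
Your overall strategy --- iterated involutions on consecutive path triples plus a finite case analysis over the $23$ tree types --- is the same one the paper uses, but two of the mechanisms you rely on to run the descent are wrong, and the second one is a genuine gap. First, the fold mechanics: for an \emph{interior} triple $x\!-\!y\!-\!z$ of the path, \emph{both} resolutions create a branch vertex and lower the diameter (replacing $(x,y)$ by $(x,z)$ branches at $z$; replacing $(y,z)$ by $(x,z)$ branches at $x$); only when the triple sits at an end of the path does one resolution return a path. So your claim that ``exactly one of the two replacements leaves $\Gamma_4$ isomorphic to a path'' is incorrect, and your plan to ``first fold near a leaf'' is precisely the choice that risks the no-progress outcome --- this is why the paper folds $I_8$ at the interior triple $(5,6,7)$, where either outcome yields $T_2$ or $T_3$ and progress is guaranteed.

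Second, and fatally: you cannot steer which of the two resolutions occurs, and weak equivalence gives you no leverage to do so. Which resolution is the cycle-free one is an intrinsic property of the partition, and by Lemma \ref{lemma4} the $S_8\times S_4$ action commutes with the involutions, so relabeling vertices or permuting components only produces the correspondingly relabeled outcome --- it can never ``move an offending chord into a more favorable component.'' Consequently a descent that requires the branched resolution to occur at each step cannot be completed as described. The missing idea is the one the paper's proof turns on: at every stage choose the triple so that \emph{both} possible outcomes advance the reduction (or so that one outcome is outright impossible, e.g.\ it would force a cycle or a vertex of degree $5$ in another component, as happens with $T_9$), and then follow \emph{every} branch of the resulting reduction tree $T_1\to\{T_2,T_3\}\to\{T_6,T_{17},T_{14},T_{16}\}\to\cdots$ until all leaves lie in $\{T_{16},T_{19},T_{20}\}$. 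Your final assertion that these three types are ``exactly the shapes that survive the descent'' is exactly the content that must be proved by this case analysis, which your proposal defers; as written, the proposal is a plan whose guiding heuristics would fail when executed.
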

\begin{proof} Up to weakly equivalence we may assume that $\Gamma_4=I_8$ with the labeling from Figure \ref{I8T23}. We will present a proof by picture. For example, in Figure \ref{I8T23} we see that  using the involution $(5,6,7)$, our cycle free 4-partition with $\Gamma_4=I_8$ can be reduced to a cycle-free $4$-partition $(\Delta_1, \Delta_2, \Delta_3,\Delta_4)$  with  $\Delta_4=T_2$ or $\Delta_4=T_3$. To avoid pictures that are too busy, all edges that appear belong to $\Gamma_4$ or $\Delta_4$ except those that have an explicit different label (like $\Gamma_3$ or $\Delta_3$ in Figure \ref{I8T23}).

\begin{figure}[h]
\centering
\begin{tikzpicture}
  [scale=0.9,auto=left]
	
	\node[shape=circle,minimum size = 24pt,inner sep=0.3pt] (m4) at (-2,0) {{\large $\Gamma_4=I_8:$}};
	\node[shape=circle,draw=black,minimum size = 12pt,inner sep=0.3pt] (n1) at (0,0) {$1$};
	\node[shape=circle,draw=black,minimum size = 12pt,inner sep=0.3pt] (n2) at (1,0) {$2$};
	\node[shape=circle,draw=black,minimum size = 12pt,inner sep=0.3pt] (n3) at (2,0) {$3$};
  \node[shape=circle,draw=black,minimum size = 12pt,inner sep=0.3pt] (n4) at (3,0) {$4$};
	\node[shape=circle,draw=black,minimum size = 12pt,inner sep=0.3pt] (n5) at (4,0) {$5$};
  \node[shape=circle,draw=black,minimum size = 12pt,inner sep=0.3pt] (n6) at (5,0) {$6$};
	\node[shape=circle,draw=black,minimum size = 12pt,inner sep=0.3pt] (n7) at (6,0) {$7$};
  \node[shape=circle,draw=black,minimum size = 12pt,inner sep=0.3pt] (n8) at (7,0) {$8$};

\node[shape=circle,minimum size = 24pt,inner sep=0.3pt] (m4) at (-2,-1) {{\large $\downarrow$}};
\path[line width=0.5mm,blue] (n1) edge[bend left=0] node [above] {} (n2);
\path[line width=0.5mm,blue] (n2) edge[bend left=0] node [above] {} (n3);
\path[line width=0.5mm,blue] (n3) edge[bend left=0] node [above] {} (n4);
\path[line width=0.5mm,blue] (n4) edge[bend left=0] node [above] {} (n5);
\path[line width=0.5mm,blue] (n5) edge[bend left=0] node [below] {$\Gamma_4$} (n6);
\path[line width=0.5mm,blue] (n6) edge[bend left=0] node [below] {$\Gamma_4$} (n7);
\path[line width=0.5mm,blue] (n7) edge[bend left=0] node [above] {} (n8);
\path[line width=0.5mm,red,dotted] (n5) edge[bend left=90] node [above] {$\Gamma_3$} (n7);

			\node[shape=circle,minimum size = 24pt,inner sep=0.3pt] (m4) at (-2,-2) {{\large $\Delta_4=T_2:$}};
	\node[shape=circle,draw=black,minimum size = 12pt,inner sep=0.3pt] (n11) at (0,-2) {$1$};
	\node[shape=circle,draw=black,minimum size = 12pt,inner sep=0.3pt] (n21) at (1,-2) {$2$};
	\node[shape=circle,draw=black,minimum size = 12pt,inner sep=0.3pt] (n31) at (2,-2) {$3$};
  \node[shape=circle,draw=black,minimum size = 12pt,inner sep=0.3pt] (n41) at (3,-2) {$4$};
	\node[shape=circle,draw=black,minimum size = 12pt,inner sep=0.3pt] (n51) at (4,-2) {$5$};
  \node[shape=circle,draw=black,minimum size = 12pt,inner sep=0.3pt] (n61) at (5,-2) {$6$};
	\node[shape=circle,draw=black,minimum size = 12pt,inner sep=0.3pt] (n71) at (6,-2) {$7$};
  \node[shape=circle,draw=black,minimum size = 12pt,inner sep=0.3pt] (n81) at (7,-2) {$8$};

\path[line width=0.5mm,blue] (n11) edge[bend left=0] node [above] {} (n21);
\path[line width=0.5mm,blue] (n21) edge[bend left=0] node [above] {} (n31);
\path[line width=0.5mm,blue] (n31) edge[bend left=0] node [above] {} (n41);
\path[line width=0.5mm,blue] (n41) edge[bend left=0] node [above] {} (n51);
\path[line width=0.5mm,red,dotted] (n51) edge[bend left=0] node [below] {$\Delta_3$} (n61);
\path[line width=0.5mm,blue] (n61) edge[bend left=0] node [below] {$\Delta_4$} (n71);
\path[line width=0.5mm,blue] (n71) edge[bend left=0] node [above] {} (n81);
\path[line width=0.5mm,blue] (n51) edge[bend left=90] node [above] {$\Delta_4$} (n71);

\node[shape=circle,minimum size = 24pt,inner sep=0.3pt] (m4) at (-2,-3) {{ or}};

			\node[shape=circle,minimum size = 24pt,inner sep=0.3pt] (m4) at (-2,-4) {{\large $\Delta_4=T_3:$}};
	\node[shape=circle,draw=black,minimum size = 12pt,inner sep=0.3pt] (n111) at (0,-4) {$1$};
	\node[shape=circle,draw=black,minimum size = 12pt,inner sep=0.3pt] (n211) at (1,-4) {$2$};
	\node[shape=circle,draw=black,minimum size = 12pt,inner sep=0.3pt] (n311) at (2,-4) {$3$};
  \node[shape=circle,draw=black,minimum size = 12pt,inner sep=0.3pt] (n411) at (3,-4) {$4$};
	\node[shape=circle,draw=black,minimum size = 12pt,inner sep=0.3pt] (n511) at (4,-4) {$5$};
  \node[shape=circle,draw=black,minimum size = 12pt,inner sep=0.3pt] (n611) at (5,-4) {$6$};
	\node[shape=circle,draw=black,minimum size = 12pt,inner sep=0.3pt] (n711) at (6,-4) {$7$};
  \node[shape=circle,draw=black,minimum size = 12pt,inner sep=0.3pt] (n811) at (7,-4) {$8$};

\path[line width=0.5mm,blue] (n111) edge[bend left=0] node [above] {} (n211);
\path[line width=0.5mm,blue] (n211) edge[bend left=0] node [above] {} (n311);
\path[line width=0.5mm,blue] (n311) edge[bend left=0] node [above] {} (n411);
\path[line width=0.5mm,blue] (n411) edge[bend left=0] node [above] {} (n511);
\path[line width=0.5mm,blue] (n511) edge[bend left=0] node [below] {$\Delta_4$} (n611);
\path[line width=0.5mm,red,dotted] (n611) edge[bend left=0] node [below] {$\Delta_3$} (n711);
\path[line width=0.5mm,blue] (n711) edge[bend left=0] node [above] {} (n811);
\path[line width=0.5mm,blue] (n511) edge[bend left=90] node [above] {$\Delta_4$} (n711);

\end{tikzpicture}
\caption{Reduction of $I_8=T_1$ to $T_2$ or $T_3$ \label{I8T23}}
\end{figure}
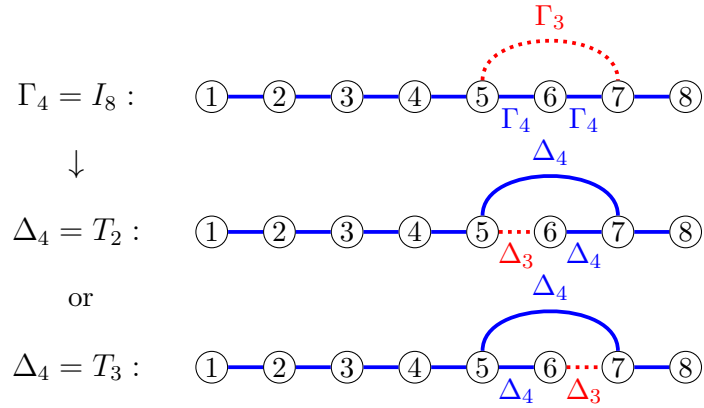

Next, in Figure \ref{T2T617} we see that if $\Gamma_4=T_2$ then using the involution $(4,5,6)$ we can reduce our partition  to a cycle-free $4$-partition with $\Delta_4=T_6$ or $\Delta_4=T_{17}$.
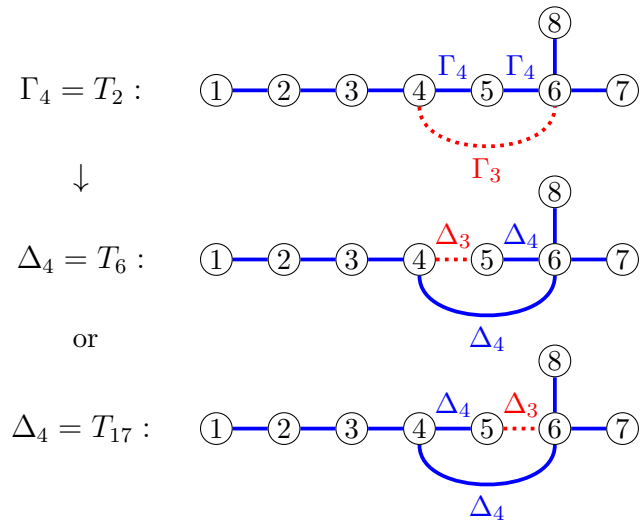
\begin{figure}[H]
\centering
\begin{tikzpicture}
  [scale=0.9,auto=left]
	
	\node[shape=circle,minimum size = 24pt,inner sep=0.3pt] (m4) at (-2,0) {{\large $\Gamma_4=T_2:$}};
	\node[shape=circle,draw=black,minimum size = 12pt,inner sep=0.3pt] (n1) at (0,0) {$1$};
	\node[shape=circle,draw=black,minimum size = 12pt,inner sep=0.3pt] (n2) at (1,0) {$2$};
	\node[shape=circle,draw=black,minimum size = 12pt,inner sep=0.3pt] (n3) at (2,0) {$3$};
  \node[shape=circle,draw=black,minimum size = 12pt,inner sep=0.3pt] (n4) at (3,0) {$4$};
	\node[shape=circle,draw=black,minimum size = 12pt,inner sep=0.3pt] (n5) at (4,0) {$5$};
  \node[shape=circle,draw=black,minimum size = 12pt,inner sep=0.3pt] (n6) at (5,0) {$6$};
	\node[shape=circle,draw=black,minimum size = 12pt,inner sep=0.3pt] (n7) at (6,0) {$7$};
  \node[shape=circle,draw=black,minimum size = 12pt,inner sep=0.3pt] (n8) at (5,1) {$8$};

\node[shape=circle,minimum size = 24pt,inner sep=0.3pt] (m4) at (-2,-1.3) {{\large $\downarrow$}};
\path[line width=0.5mm,blue] (n1) edge[bend left=0] node [above] {} (n2);
\path[line width=0.5mm,blue] (n2) edge[bend left=0] node [above] {} (n3);
\path[line width=0.5mm,blue] (n3) edge[bend left=0] node [above] {} (n4);
\path[line width=0.5mm,blue] (n4) edge[bend left=0] node [above] {$\Gamma_4$} (n5);
\path[line width=0.5mm,blue] (n5) edge[bend left=0] node [above] {$\Gamma_4$} (n6);
\path[line width=0.5mm,blue] (n6) edge[bend left=0] node [below] {} (n7);
\path[line width=0.5mm,blue] (n6) edge[bend left=0] node [above] {} (n8);
\path[line width=0.5mm,red,dotted] (n4) edge[bend right=90] node [below] {$\Gamma_3$} (n6);
	
	\node[shape=circle,minimum size = 24pt,inner sep=0.3pt] (m41) at (-2,-2.5) {{\large $\Delta_4=T_6:$}};
	\node[shape=circle,draw=black,minimum size = 12pt,inner sep=0.3pt] (n11) at (0,-2.5) {$1$};
	\node[shape=circle,draw=black,minimum size = 12pt,inner sep=0.3pt] (n21) at (1,-2.5) {$2$};
	\node[shape=circle,draw=black,minimum size = 12pt,inner sep=0.3pt] (n31) at (2,-2.5) {$3$};
  \node[shape=circle,draw=black,minimum size = 12pt,inner sep=0.3pt] (n41) at (3,-2.5) {$4$};
	\node[shape=circle,draw=black,minimum size = 12pt,inner sep=0.3pt] (n51) at (4,-2.5) {$5$};
  \node[shape=circle,draw=black,minimum size = 12pt,inner sep=0.3pt] (n61) at (5,-2.5) {$6$};
	\node[shape=circle,draw=black,minimum size = 12pt,inner sep=0.3pt] (n71) at (6,-2.5) {$7$};
  \node[shape=circle,draw=black,minimum size = 12pt,inner sep=0.3pt] (n81) at (5,-1.5) {$8$};

\path[line width=0.5mm,blue] (n11) edge[bend left=0] node [above] {} (n21);
\path[line width=0.5mm,blue] (n21) edge[bend left=0] node [above] {} (n31);
\path[line width=0.5mm,blue] (n31) edge[bend left=0] node [above] {} (n41);
\path[line width=0.5mm,red,dotted] (n41) edge[bend left=0] node [above] {$\Delta_3$} (n51);
\path[line width=0.5mm,blue] (n51) edge[bend left=0] node [above] {$\Delta_4$} (n61);
\path[line width=0.5mm,blue] (n61) edge[bend left=0] node [below] {} (n71);
\path[line width=0.5mm,blue] (n61) edge[bend left=0] node [above] {} (n81);
\path[line width=0.5mm,blue] (n41) edge[bend right=90] node [below] {$\Delta_4$} (n61);

\node[shape=circle,minimum size = 24pt,inner sep=0.3pt] (m42) at (-2,-3.7) {{ or}};

\node[shape=circle,minimum size = 24pt,inner sep=0.3pt] (m4) at (-2,-5) {{\large $\Delta_4=T_{17}:$}};
	\node[shape=circle,draw=black,minimum size = 12pt,inner sep=0.3pt] (n12) at (0,-5) {$1$};
	\node[shape=circle,draw=black,minimum size = 12pt,inner sep=0.3pt] (n22) at (1,-5) {$2$};
	\node[shape=circle,draw=black,minimum size = 12pt,inner sep=0.3pt] (n32) at (2,-5) {$3$};
  \node[shape=circle,draw=black,minimum size = 12pt,inner sep=0.3pt] (n42) at (3,-5) {$4$};
	\node[shape=circle,draw=black,minimum size = 12pt,inner sep=0.3pt] (n52) at (4,-5) {$5$};
  \node[shape=circle,draw=black,minimum size = 12pt,inner sep=0.3pt] (n62) at (5,-5) {$6$};
	\node[shape=circle,draw=black,minimum size = 12pt,inner sep=0.3pt] (n72) at (6,-5) {$7$};
  \node[shape=circle,draw=black,minimum size = 12pt,inner sep=0.3pt] (n82) at (5,-4) {$8$};

\path[line width=0.5mm,blue] (n12) edge[bend left=0] node [above] {} (n22);
\path[line width=0.5mm,blue] (n22) edge[bend left=0] node [above] {} (n32);
\path[line width=0.5mm,blue] (n32) edge[bend left=0] node [above] {} (n42);
\path[line width=0.5mm,blue] (n42) edge[bend left=0] node [above] {$\Delta_4$} (n52);
\path[line width=0.5mm,red,dotted] (n52) edge[bend left=0] node [above] {$\Delta_3$} (n62);
\path[line width=0.5mm,blue] (n62) edge[bend left=0] node [below] {} (n72);
\path[line width=0.5mm,blue] (n62) edge[bend left=0] node [above] {} (n82);
\path[line width=0.5mm,blue] (n42) edge[bend right=90] node [below] {$\Delta_4$} (n62);

\end{tikzpicture}
\caption{Reduction of $T_2$ to $T_6$ or $T_{17}$ \label{T2T617}}
\end{figure}

In Figure \ref{T3T1416} we see that if $\Gamma_4=T_3$ then using the involution $(2,3,4)$ we can reduce it to a cycle-free $4$-partition with $\Delta_4=T_{14}$ or $\Delta_4=T_{16}$.

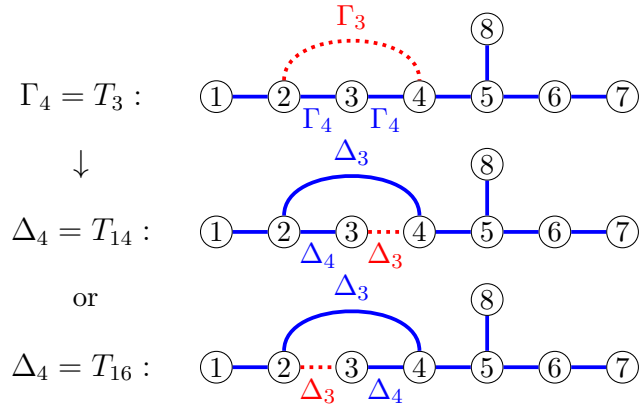
\begin{figure}[H]
\centering
\begin{tikzpicture}
  [scale=0.9,auto=left]
	
	\node[shape=circle,minimum size = 24pt,inner sep=0.3pt] (m4) at (-2,0) {{\large $\Gamma_4=T_3:$}};
	\node[shape=circle,draw=black,minimum size = 12pt,inner sep=0.3pt] (n1) at (0,0) {$1$};
	\node[shape=circle,draw=black,minimum size = 12pt,inner sep=0.3pt] (n2) at (1,0) {$2$};
	\node[shape=circle,draw=black,minimum size = 12pt,inner sep=0.3pt] (n3) at (2,0) {$3$};
  \node[shape=circle,draw=black,minimum size = 12pt,inner sep=0.3pt] (n4) at (3,0) {$4$};
	\node[shape=circle,draw=black,minimum size = 12pt,inner sep=0.3pt] (n5) at (4,0) {$5$};
  \node[shape=circle,draw=black,minimum size = 12pt,inner sep=0.3pt] (n6) at (5,0) {$6$};
	\node[shape=circle,draw=black,minimum size = 12pt,inner sep=0.3pt] (n7) at (6,0) {$7$};
  \node[shape=circle,draw=black,minimum size = 12pt,inner sep=0.3pt] (n8) at (4,1) {$8$};

\node[shape=circle,minimum size = 24pt,inner sep=0.3pt] (m4) at (-2,-1) {{\large $\downarrow$}};
\path[line width=0.5mm,blue] (n1) edge[bend left=0] node [above] {} (n2);
\path[line width=0.5mm,blue] (n2) edge[bend left=0] node [below] {$\Gamma_4$} (n3);
\path[line width=0.5mm,blue] (n3) edge[bend left=0] node [below] {$\Gamma_4$} (n4);
\path[line width=0.5mm,blue] (n4) edge[bend left=0] node [above] {} (n5);
\path[line width=0.5mm,blue] (n5) edge[bend left=0] node [above] {} (n6);
\path[line width=0.5mm,blue] (n6) edge[bend left=0] node [below] {} (n7);
\path[line width=0.5mm,blue] (n5) edge[bend left=0] node [above] {} (n8);
\path[line width=0.5mm,red,dotted] (n2) edge[bend left=90] node [above] {$\Gamma_3$} (n4);

	\node[shape=circle,minimum size = 24pt,inner sep=0.3pt] (m41) at (-2,-4) {{\large $\Delta_4=T_{16}:$}};
	\node[shape=circle,draw=black,minimum size = 12pt,inner sep=0.3pt] (n11) at (0,-4) {$1$};
	\node[shape=circle,draw=black,minimum size = 12pt,inner sep=0.3pt] (n21) at (1,-4) {$2$};
	\node[shape=circle,draw=black,minimum size = 12pt,inner sep=0.3pt] (n31) at (2,-4) {$3$};
  \node[shape=circle,draw=black,minimum size = 12pt,inner sep=0.3pt] (n41) at (3,-4) {$4$};
	\node[shape=circle,draw=black,minimum size = 12pt,inner sep=0.3pt] (n51) at (4,-4) {$5$};
  \node[shape=circle,draw=black,minimum size = 12pt,inner sep=0.3pt] (n61) at (5,-4) {$6$};
	\node[shape=circle,draw=black,minimum size = 12pt,inner sep=0.3pt] (n71) at (6,-4) {$7$};
  \node[shape=circle,draw=black,minimum size = 12pt,inner sep=0.3pt] (n81) at (4,-3) {$8$};

\path[line width=0.5mm,blue] (n11) edge[bend left=0] node [above] {} (n21);
\path[line width=0.5mm,red,dotted] (n21) edge[bend left=0] node [below] {$\Delta_3$} (n31);
\path[line width=0.5mm,blue] (n31) edge[bend left=0] node [below] {$\Delta_4$} (n41);
\path[line width=0.5mm,blue] (n41) edge[bend left=0] node [above] {} (n51);
\path[line width=0.5mm,blue] (n51) edge[bend left=0] node [above] {} (n61);
\path[line width=0.5mm,blue] (n61) edge[bend left=0] node [below] {} (n71);
\path[line width=0.5mm,blue] (n51) edge[bend left=0] node [above] {} (n81);
\path[line width=0.5mm,blue] (n21) edge[bend left=90] node [above] {$\Delta_3$} (n41);

\node[shape=circle,minimum size = 24pt,inner sep=0.3pt] (m42) at (-2,-3) {{ or}};

	\node[shape=circle,minimum size = 24pt,inner sep=0.3pt] (m41) at (-2,-2) {{\large $\Delta_4=T_{14}:$}};
	\node[shape=circle,draw=black,minimum size = 12pt,inner sep=0.3pt] (n111) at (0,-2) {$1$};
	\node[shape=circle,draw=black,minimum size = 12pt,inner sep=0.3pt] (n211) at (1,-2) {$2$};
	\node[shape=circle,draw=black,minimum size = 12pt,inner sep=0.3pt] (n311) at (2,-2) {$3$};
  \node[shape=circle,draw=black,minimum size = 12pt,inner sep=0.3pt] (n411) at (3,-2) {$4$};
	\node[shape=circle,draw=black,minimum size = 12pt,inner sep=0.3pt] (n511) at (4,-2) {$5$};
  \node[shape=circle,draw=black,minimum size = 12pt,inner sep=0.3pt] (n611) at (5,-2) {$6$};
	\node[shape=circle,draw=black,minimum size = 12pt,inner sep=0.3pt] (n711) at (6,-2) {$7$};
  \node[shape=circle,draw=black,minimum size = 12pt,inner sep=0.3pt] (n811) at (4,-1) {$8$};

\path[line width=0.5mm,blue] (n111) edge[bend left=0] node [above] {} (n211);
\path[line width=0.5mm,blue] (n211) edge[bend left=0] node [below] {$\Delta_4$} (n311);
\path[line width=0.5mm,red,dotted] (n311) edge[bend left=0] node [below] {$\Delta_3$} (n411);
\path[line width=0.5mm,blue] (n411) edge[bend left=0] node [above] {} (n511);
\path[line width=0.5mm,blue] (n511) edge[bend left=0] node [above] {} (n611);
\path[line width=0.5mm,blue] (n611) edge[bend left=0] node [below] {} (n711);
\path[line width=0.5mm,blue] (n511) edge[bend left=0] node [above] {} (n811);
\path[line width=0.5mm,blue] (n211) edge[bend left=90] node [above] {$\Delta_3$} (n411);

\end{tikzpicture}
\caption{Reduction of $T_3$ to $T_{14}$ or $T_{16}$ \label{T3T1416}}
\end{figure}

In Figure \ref{T14T1920} we see that if $\Gamma_4=T_{14}$ then using the involution $(2,3,4)$ we can reduce it to a cycle-free $4$-partition with $\Delta_4=T_{19}$ or $\Delta_4=T_{20}$ (which takes care of the $T_3$ branch of our proof).

\begin{figure}[H]
\centering
\begin{tikzpicture}
  [scale=0.9,auto=left]
	
	\node[shape=circle,minimum size = 24pt,inner sep=0.3pt] (m4) at (-2,0) {{\large $\Gamma_4=T_{14}:$}};
	\node[shape=circle,draw=black,minimum size = 12pt,inner sep=0.3pt] (n1) at (0,0) {$1$};
	\node[shape=circle,draw=black,minimum size = 12pt,inner sep=0.3pt] (n2) at (1,0) {$2$};
	\node[shape=circle,draw=black,minimum size = 12pt,inner sep=0.3pt] (n3) at (2,0) {$3$};
  \node[shape=circle,draw=black,minimum size = 12pt,inner sep=0.3pt] (n4) at (3,0) {$4$};
	\node[shape=circle,draw=black,minimum size = 12pt,inner sep=0.3pt] (n5) at (4,0) {$5$};
  \node[shape=circle,draw=black,minimum size = 12pt,inner sep=0.3pt] (n6) at (5,0) {$6$};
	\node[shape=circle,draw=black,minimum size = 12pt,inner sep=0.3pt] (n7) at (1,1) {$7$};
  \node[shape=circle,draw=black,minimum size = 12pt,inner sep=0.3pt] (n8) at (3,1) {$8$};

\node[shape=circle,minimum size = 24pt,inner sep=0.3pt] (m4) at (-2,-1.3) {{\large $\downarrow$}};
\path[line width=0.5mm,blue] (n1) edge[bend left=0] node [above] {} (n2);
\path[line width=0.5mm,blue] (n2) edge[bend left=0] node [above] {$\Gamma_4$} (n3);
\path[line width=0.5mm,blue] (n3) edge[bend left=0] node [above] {$\Gamma_4$} (n4);
\path[line width=0.5mm,blue] (n4) edge[bend left=0] node [above] {} (n5);
\path[line width=0.5mm,blue] (n5) edge[bend left=0] node [above] {} (n6);
\path[line width=0.5mm,blue] (n2) edge[bend left=0] node [below] {} (n7);
\path[line width=0.5mm,blue] (n4) edge[bend left=0] node [above] {} (n8);
\path[line width=0.5mm,red,dotted] (n2) edge[bend right=90] node [below] {$\Gamma_3$} (n4);

	\node[shape=circle,minimum size = 24pt,inner sep=0.3pt] (m41) at (-2,-5) {{\large $\Delta_4=T_{20}:$}};
	\node[shape=circle,draw=black,minimum size = 12pt,inner sep=0.3pt] (n11) at (0,-5) {$1$};
	\node[shape=circle,draw=black,minimum size = 12pt,inner sep=0.3pt] (n21) at (1,-5) {$2$};
	\node[shape=circle,draw=black,minimum size = 12pt,inner sep=0.3pt] (n31) at (2,-5) {$3$};
  \node[shape=circle,draw=black,minimum size = 12pt,inner sep=0.3pt] (n41) at (3,-5) {$4$};
	\node[shape=circle,draw=black,minimum size = 12pt,inner sep=0.3pt] (n51) at (4,-5) {$5$};
  \node[shape=circle,draw=black,minimum size = 12pt,inner sep=0.3pt] (n61) at (5,-5) {$6$};
	\node[shape=circle,draw=black,minimum size = 12pt,inner sep=0.3pt] (n71) at (1,-4) {$7$};
  \node[shape=circle,draw=black,minimum size = 12pt,inner sep=0.3pt] (n81) at (3,-4) {$8$};

\path[line width=0.5mm,blue] (n11) edge[bend left=0] node [above] {} (n21);
\path[line width=0.5mm,blue] (n21) edge[bend left=0] node [above] {$\Delta_4$} (n31);
\path[line width=0.5mm,red,dotted] (n31) edge[bend left=0] node [above] {$\Delta_3$} (n41);
\path[line width=0.5mm,blue] (n41) edge[bend left=0] node [above] {} (n51);
\path[line width=0.5mm,blue] (n51) edge[bend left=0] node [above] {} (n61);
\path[line width=0.5mm,blue] (n21) edge[bend left=0] node [below] {} (n71);
\path[line width=0.5mm,blue] (n41) edge[bend left=0] node [above] {} (n81);
\path[line width=0.5mm,blue] (n21) edge[bend right=90] node [below] {$\Delta_4$} (n41);

\node[shape=circle,minimum size = 24pt,inner sep=0.3pt] (m42) at (-2,-3.7) {{ or}};

		\node[shape=circle,minimum size = 24pt,inner sep=0.3pt] (m41) at (-2,-2.5) {{\large $\Delta_4=T_{19}:$}};
	\node[shape=circle,draw=black,minimum size = 12pt,inner sep=0.3pt] (n111) at (0,-2.5) {$1$};
	\node[shape=circle,draw=black,minimum size = 12pt,inner sep=0.3pt] (n211) at (1,-2.5) {$2$};
	\node[shape=circle,draw=black,minimum size = 12pt,inner sep=0.3pt] (n311) at (2,-2.5) {$3$};
  \node[shape=circle,draw=black,minimum size = 12pt,inner sep=0.3pt] (n411) at (3,-2.5) {$4$};
	\node[shape=circle,draw=black,minimum size = 12pt,inner sep=0.3pt] (n511) at (4,-2.5) {$5$};
  \node[shape=circle,draw=black,minimum size = 12pt,inner sep=0.3pt] (n611) at (5,-2.5) {$6$};
	\node[shape=circle,draw=black,minimum size = 12pt,inner sep=0.3pt] (n711) at (1,-1.5) {$7$};
  \node[shape=circle,draw=black,minimum size = 12pt,inner sep=0.3pt] (n811) at (3,-1.5) {$8$};

\path[line width=0.5mm,blue] (n111) edge[bend left=0] node [above] {} (n211);
\path[line width=0.5mm,red,dotted] (n211) edge[bend left=0] node [above] {$\Delta_3$} (n311);
\path[line width=0.5mm,blue] (n311) edge[bend left=0] node [above] {$\Delta_4$} (n411);
\path[line width=0.5mm,blue] (n411) edge[bend left=0] node [above] {} (n511);
\path[line width=0.5mm,blue] (n511) edge[bend left=0] node [above] {} (n611);
\path[line width=0.5mm,blue] (n211) edge[bend left=0] node [below] {} (n711);
\path[line width=0.5mm,blue] (n411) edge[bend left=0] node [above] {} (n811);
\path[line width=0.5mm,blue] (n211) edge[bend right=90] node [below] {$\Delta_4$} (n411);

\end{tikzpicture}
\caption{Reduction of $T_{14}$ to $T_{19}$ or $T_{20}$ \label{T14T1920}}
\end{figure}
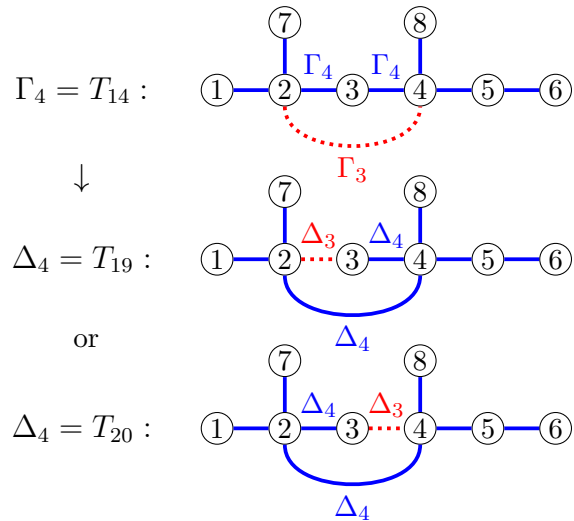

In Figure \ref{T17T1923} we see that if $\Gamma_4=T_{17}$ then using the involution $(3,4,5)$ we can reduce it to a cycle-free $4$-partition with $\Delta_4=T_{19}$ or $\Delta_4=T_{23}$

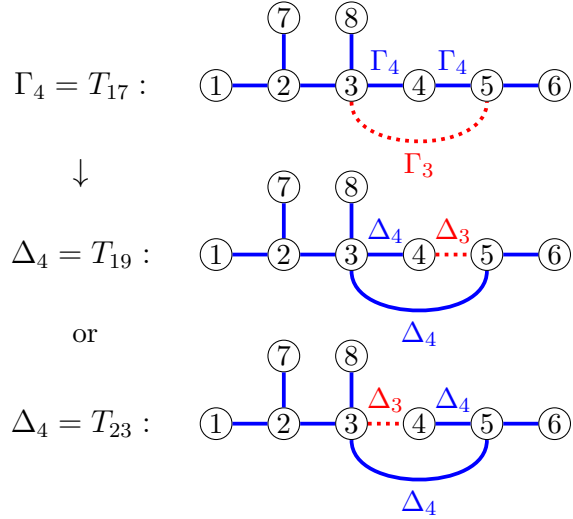
\begin{figure}[H]
\centering
\begin{tikzpicture}
  [scale=0.9,auto=left]
	
	\node[shape=circle,minimum size = 24pt,inner sep=0.3pt] (m4) at (-2,0) {{\large $\Gamma_4=T_{17}:$}};
	\node[shape=circle,draw=black,minimum size = 12pt,inner sep=0.3pt] (n1) at (0,0) {$1$};
	\node[shape=circle,draw=black,minimum size = 12pt,inner sep=0.3pt] (n2) at (1,0) {$2$};
	\node[shape=circle,draw=black,minimum size = 12pt,inner sep=0.3pt] (n3) at (2,0) {$3$};
  \node[shape=circle,draw=black,minimum size = 12pt,inner sep=0.3pt] (n4) at (3,0) {$4$};
	\node[shape=circle,draw=black,minimum size = 12pt,inner sep=0.3pt] (n5) at (4,0) {$5$};
  \node[shape=circle,draw=black,minimum size = 12pt,inner sep=0.3pt] (n6) at (5,0) {$6$};
	\node[shape=circle,draw=black,minimum size = 12pt,inner sep=0.3pt] (n7) at (1,1) {$7$};
  \node[shape=circle,draw=black,minimum size = 12pt,inner sep=0.3pt] (n8) at (2,1) {$8$};

\node[shape=circle,minimum size = 24pt,inner sep=0.3pt] (m4) at (-2,-1.3) {{\large $\downarrow$}};
\path[line width=0.5mm,blue] (n1) edge[bend left=0] node [above] {} (n2);
\path[line width=0.5mm,blue] (n2) edge[bend left=0] node [above] {} (n3);
\path[line width=0.5mm,blue] (n3) edge[bend left=0] node [above] {$\Gamma_4$} (n4);
\path[line width=0.5mm,blue] (n4) edge[bend left=0] node [above] {$\Gamma_4$} (n5);
\path[line width=0.5mm,blue] (n5) edge[bend left=0] node [above] {} (n6);
\path[line width=0.5mm,blue] (n2) edge[bend left=0] node [below] {} (n7);
\path[line width=0.5mm,blue] (n3) edge[bend left=0] node [above] {} (n8);
\path[line width=0.5mm,red,dotted] (n3) edge[bend right=90] node [below] {$\Gamma_3$} (n5);

	\node[shape=circle,minimum size = 24pt,inner sep=0.3pt] (m4) at (-2,-2.5) {{\large $\Delta_4=T_{19}:$}};
	\node[shape=circle,draw=black,minimum size = 12pt,inner sep=0.3pt] (n11) at (0,-2.5) {$1$};
	\node[shape=circle,draw=black,minimum size = 12pt,inner sep=0.3pt] (n21) at (1,-2.5) {$2$};
	\node[shape=circle,draw=black,minimum size = 12pt,inner sep=0.3pt] (n31) at (2,-2.5) {$3$};
  \node[shape=circle,draw=black,minimum size = 12pt,inner sep=0.3pt] (n41) at (3,-2.5) {$4$};
	\node[shape=circle,draw=black,minimum size = 12pt,inner sep=0.3pt] (n51) at (4,-2.5) {$5$};
  \node[shape=circle,draw=black,minimum size = 12pt,inner sep=0.3pt] (n61) at (5,-2.5) {$6$};
	\node[shape=circle,draw=black,minimum size = 12pt,inner sep=0.3pt] (n71) at (1,-1.5) {$7$};
  \node[shape=circle,draw=black,minimum size = 12pt,inner sep=0.3pt] (n81) at (2,-1.5) {$8$};

\path[line width=0.5mm,blue] (n11) edge[bend left=0] node [above] {} (n21);
\path[line width=0.5mm,blue] (n21) edge[bend left=0] node [above] {} (n31);
\path[line width=0.5mm,blue] (n31) edge[bend left=0] node [above] {$\Delta_4$} (n41);
\path[line width=0.5mm,red,dotted] (n41) edge[bend left=0] node [above] {$\Delta_3$} (n51);
\path[line width=0.5mm,blue] (n51) edge[bend left=0] node [above] {} (n61);
\path[line width=0.5mm,blue] (n21) edge[bend left=0] node [below] {} (n71);
\path[line width=0.5mm,blue] (n31) edge[bend left=0] node [above] {} (n81);
\path[line width=0.5mm,blue] (n31) edge[bend right=90] node [below] {$\Delta_4$} (n51);

\node[shape=circle,minimum size = 24pt,inner sep=0.3pt] (m42) at (-2,-3.7) {{ or}};

	\node[shape=circle,minimum size = 24pt,inner sep=0.3pt] (m4) at (-2,-5) {{\large $\Delta_4=T_{23}:$}};
	\node[shape=circle,draw=black,minimum size = 12pt,inner sep=0.3pt] (n111) at (0,-5) {$1$};
	\node[shape=circle,draw=black,minimum size = 12pt,inner sep=0.3pt] (n211) at (1,-5) {$2$};
	\node[shape=circle,draw=black,minimum size = 12pt,inner sep=0.3pt] (n311) at (2,-5) {$3$};
  \node[shape=circle,draw=black,minimum size = 12pt,inner sep=0.3pt] (n411) at (3,-5) {$4$};
	\node[shape=circle,draw=black,minimum size = 12pt,inner sep=0.3pt] (n511) at (4,-5) {$5$};
  \node[shape=circle,draw=black,minimum size = 12pt,inner sep=0.3pt] (n611) at (5,-5) {$6$};
	\node[shape=circle,draw=black,minimum size = 12pt,inner sep=0.3pt] (n711) at (1,-4) {$7$};
  \node[shape=circle,draw=black,minimum size = 12pt,inner sep=0.3pt] (n811) at (2,-4) {$8$};

\path[line width=0.5mm,blue] (n111) edge[bend left=0] node [above] {} (n211);
\path[line width=0.5mm,blue] (n211) edge[bend left=0] node [above] {} (n311);
\path[line width=0.5mm,red,dotted] (n311) edge[bend left=0] node [above] {$\Delta_3$} (n411);
\path[line width=0.5mm,blue] (n411) edge[bend left=0] node [above] {$\Delta_4$} (n511);
\path[line width=0.5mm,blue] (n511) edge[bend left=0] node [above] {} (n611);
\path[line width=0.5mm,blue] (n211) edge[bend left=0] node [below] {} (n711);
\path[line width=0.5mm,blue] (n311) edge[bend left=0] node [above] {} (n811);
\path[line width=0.5mm,blue] (n311) edge[bend right=90] node [below] {$\Delta_4$} (n511);

\end{tikzpicture}
\caption{Reduction of $T_{17}$ to $T_{19}$ or $T_{23}$ \label{T17T1923}}
\end{figure}

In Figure \ref{T23T19}  we see that  if $\Gamma_4=T_{23}$ then using the involution $(2,3,4)$ we can reduce it to a cycle-free $4$-partition with $\Delta_4=T_{19}$.

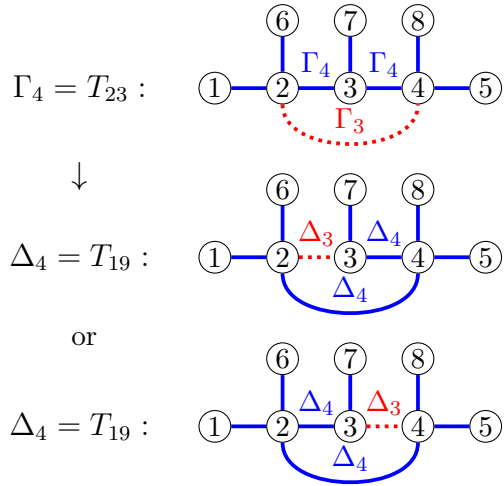
\begin{figure}[H]
\centering
\begin{tikzpicture}
  [scale=0.9,auto=left]
	
	\node[shape=circle,minimum size = 24pt,inner sep=0.3pt] (m4) at (-2,0) {{\large $\Gamma_4=T_{23}:$}};
	\node[shape=circle,draw=black,minimum size = 12pt,inner sep=0.3pt] (n1) at (0,0) {$1$};
	\node[shape=circle,draw=black,minimum size = 12pt,inner sep=0.3pt] (n2) at (1,0) {$2$};
	\node[shape=circle,draw=black,minimum size = 12pt,inner sep=0.3pt] (n3) at (2,0) {$3$};
  \node[shape=circle,draw=black,minimum size = 12pt,inner sep=0.3pt] (n4) at (3,0) {$4$};
	\node[shape=circle,draw=black,minimum size = 12pt,inner sep=0.3pt] (n5) at (4,0) {$5$};
  \node[shape=circle,draw=black,minimum size = 12pt,inner sep=0.3pt] (n6) at (1,1) {$6$};
	\node[shape=circle,draw=black,minimum size = 12pt,inner sep=0.3pt] (n7) at (2,1) {$7$};
  \node[shape=circle,draw=black,minimum size = 12pt,inner sep=0.3pt] (n8) at (3,1) {$8$};

\node[shape=circle,minimum size = 24pt,inner sep=0.3pt] (m4) at (-2,-1.3) {{\large $\downarrow$}};
\path[line width=0.5mm,blue] (n1) edge[bend left=0] node [above] {} (n2);
\path[line width=0.5mm,blue] (n2) edge[bend left=0] node [above] {$\Gamma_4$} (n3);
\path[line width=0.5mm,blue] (n3) edge[bend left=0] node [above] {$\Gamma_4$} (n4);
\path[line width=0.5mm,blue] (n4) edge[bend left=0] node [above] {} (n5);
\path[line width=0.5mm,blue] (n2) edge[bend left=0] node [above] {} (n6);
\path[line width=0.5mm,blue] (n3) edge[bend left=0] node [below] {} (n7);
\path[line width=0.5mm,blue] (n4) edge[bend left=0] node [above] {} (n8);
\path[line width=0.5mm,red,dotted] (n2) edge[bend right=90] node [above] {$\Gamma_3$} (n4);

		\node[shape=circle,minimum size = 24pt,inner sep=0.3pt] (m4) at (-2,-2.5) {{\large $\Delta_4=T_{19}:$}};
	\node[shape=circle,draw=black,minimum size = 12pt,inner sep=0.3pt] (n11) at (0,-2.5) {$1$};
	\node[shape=circle,draw=black,minimum size = 12pt,inner sep=0.3pt] (n21) at (1,-2.5) {$2$};
	\node[shape=circle,draw=black,minimum size = 12pt,inner sep=0.3pt] (n31) at (2,-2.5) {$3$};
  \node[shape=circle,draw=black,minimum size = 12pt,inner sep=0.3pt] (n41) at (3,-2.5) {$4$};
	\node[shape=circle,draw=black,minimum size = 12pt,inner sep=0.3pt] (n51) at (4,-2.5) {$5$};
  \node[shape=circle,draw=black,minimum size = 12pt,inner sep=0.3pt] (n61) at (1,-1.5) {$6$};
	\node[shape=circle,draw=black,minimum size = 12pt,inner sep=0.3pt] (n71) at (2,-1.5) {$7$};
  \node[shape=circle,draw=black,minimum size = 12pt,inner sep=0.3pt] (n81) at (3,-1.5) {$8$};

\path[line width=0.5mm,blue] (n11) edge[bend left=0] node [above] {} (n21);
\path[line width=0.5mm,red,dotted] (n21) edge[bend left=0] node [above] {$\Delta_3$} (n31);
\path[line width=0.5mm,blue] (n31) edge[bend left=0] node [above] {$\Delta_4$} (n41);
\path[line width=0.5mm,blue] (n41) edge[bend left=0] node [above] {} (n51);
\path[line width=0.5mm,blue] (n21) edge[bend left=0] node [above] {} (n61);
\path[line width=0.5mm,blue] (n31) edge[bend left=0] node [below] {} (n71);
\path[line width=0.5mm,blue] (n41) edge[bend left=0] node [above] {} (n81);
\path[line width=0.5mm,blue] (n21) edge[bend right=90] node [above] {$\Delta_4$} (n41);
	
\node[shape=circle,minimum size = 24pt,inner sep=0.3pt] (m42) at (-2,-3.7) {{ or}};

		\node[shape=circle,minimum size = 24pt,inner sep=0.3pt] (m4) at (-2,-5) {{\large $\Delta_4=T_{19}:$}};
	\node[shape=circle,draw=black,minimum size = 12pt,inner sep=0.3pt] (n12) at (0,-5) {$1$};
	\node[shape=circle,draw=black,minimum size = 12pt,inner sep=0.3pt] (n22) at (1,-5) {$2$};
	\node[shape=circle,draw=black,minimum size = 12pt,inner sep=0.3pt] (n32) at (2,-5) {$3$};
  \node[shape=circle,draw=black,minimum size = 12pt,inner sep=0.3pt] (n42) at (3,-5) {$4$};
	\node[shape=circle,draw=black,minimum size = 12pt,inner sep=0.3pt] (n52) at (4,-5) {$5$};
  \node[shape=circle,draw=black,minimum size = 12pt,inner sep=0.3pt] (n62) at (1,-4) {$6$};
	\node[shape=circle,draw=black,minimum size = 12pt,inner sep=0.3pt] (n72) at (2,-4) {$7$};
  \node[shape=circle,draw=black,minimum size = 12pt,inner sep=0.3pt] (n82) at (3,-4) {$8$};

\path[line width=0.5mm,blue] (n12) edge[bend left=0] node [above] {} (n22);
\path[line width=0.5mm,blue] (n22) edge[bend left=0] node [above] {$\Delta_4$} (n32);
\path[line width=0.5mm,red,dotted] (n32) edge[bend left=0] node [above] {$\Delta_3$} (n42);
\path[line width=0.5mm,blue] (n42) edge[bend left=0] node [above] {} (n52);
\path[line width=0.5mm,blue] (n22) edge[bend left=0] node [above] {} (n62);
\path[line width=0.5mm,blue] (n32) edge[bend left=0] node [below] {} (n72);
\path[line width=0.5mm,blue] (n42) edge[bend left=0] node [above] {} (n82);
\path[line width=0.5mm,blue] (n22) edge[bend right=90] node [above] {$\Delta_4$} (n42);
        	
\end{tikzpicture}
\caption{Reduction of $T_{23}$ to $T_{19}$ \label{T23T19}}
\end{figure}

Finally, in Figure \ref{T6T20} using the involution $(3,4,5)$ we see that  any homogeneous cycle-free $4$-partition with  $\Gamma_4=T_{6}$ is involution equivalent to one with $\Delta_4=T_{20}$. Notice that the other graph  from Figure \ref{T6T20} is  $T_9$, which  cannot be part of a cycle-free $4$-partition of $K_8$ since it has a vertex of degree $5$ (and so not all the $\Delta_i$'s can be connected to vertex $5$).

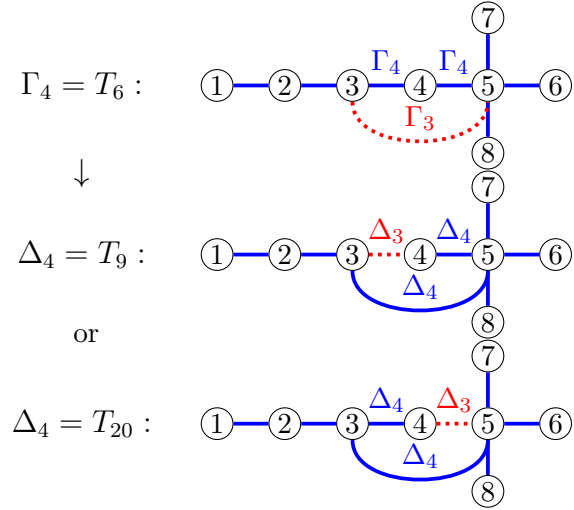
\begin{figure}[H]
\centering
\begin{tikzpicture}
  [scale=0.9,auto=left]
	
	\node[shape=circle,minimum size = 24pt,inner sep=0.3pt] (m4) at (-2,0) {{\large $\Gamma_4=T_{6}:$}};
	\node[shape=circle,draw=black,minimum size = 12pt,inner sep=0.3pt] (n1) at (0,0) {$1$};
	\node[shape=circle,draw=black,minimum size = 12pt,inner sep=0.3pt] (n2) at (1,0) {$2$};
	\node[shape=circle,draw=black,minimum size = 12pt,inner sep=0.3pt] (n3) at (2,0) {$3$};
  \node[shape=circle,draw=black,minimum size = 12pt,inner sep=0.3pt] (n4) at (3,0) {$4$};
	\node[shape=circle,draw=black,minimum size = 12pt,inner sep=0.3pt] (n5) at (4,0) {$5$};
  \node[shape=circle,draw=black,minimum size = 12pt,inner sep=0.3pt] (n6) at (5,0) {$6$};
	\node[shape=circle,draw=black,minimum size = 12pt,inner sep=0.3pt] (n7) at (4,1) {$7$};
  \node[shape=circle,draw=black,minimum size = 12pt,inner sep=0.3pt] (n8) at (4,-1) {$8$};

\node[shape=circle,minimum size = 24pt,inner sep=0.3pt] (m4) at (-2,-1.3) {{\large $\downarrow$}};
\path[line width=0.5mm,blue] (n1) edge[bend left=0] node [above] {} (n2);
\path[line width=0.5mm,blue] (n2) edge[bend left=0] node [above] {} (n3);
\path[line width=0.5mm,blue] (n3) edge[bend left=0] node [above] {$\Gamma_4$} (n4);
\path[line width=0.5mm,blue] (n4) edge[bend left=0] node [above] {$\Gamma_4$} (n5);
\path[line width=0.5mm,blue] (n5) edge[bend left=0] node [above] {} (n6);
\path[line width=0.5mm,blue] (n7) edge[bend left=0] node [below] {} (n5);
\path[line width=0.5mm,blue] (n8) edge[bend left=0] node [above] {} (n5);
\path[line width=0.5mm,red,dotted] (n3) edge[bend right=90] node [above] {$\Gamma_3$} (n5);

	\node[shape=circle,minimum size = 24pt,inner sep=0.3pt] (m42) at (-2,-2.5) {{\large $\Delta_4=T_{9}:$}};
	\node[shape=circle,draw=black,minimum size = 12pt,inner sep=0.3pt] (n11) at (0,-2.5) {$1$};
	\node[shape=circle,draw=black,minimum size = 12pt,inner sep=0.3pt] (n21) at (1,-2.5) {$2$};
	\node[shape=circle,draw=black,minimum size = 12pt,inner sep=0.3pt] (n31) at (2,-2.5) {$3$};
  \node[shape=circle,draw=black,minimum size = 12pt,inner sep=0.3pt] (n41) at (3,-2.5) {$4$};
	\node[shape=circle,draw=black,minimum size = 12pt,inner sep=0.3pt] (n51) at (4,-2.5) {$5$};
  \node[shape=circle,draw=black,minimum size = 12pt,inner sep=0.3pt] (n61) at (5,-2.5) {$6$};
	\node[shape=circle,draw=black,minimum size = 12pt,inner sep=0.3pt] (n71) at (4,-1.5) {$7$};
  \node[shape=circle,draw=black,minimum size = 12pt,inner sep=0.3pt] (n81) at (4,-3.5) {$8$};

\path[line width=0.5mm,blue] (n11) edge[bend left=0] node [above] {} (n21);
\path[line width=0.5mm,blue] (n21) edge[bend left=0] node [above] {} (n31);
\path[line width=0.5mm,red,dotted] (n31) edge[bend left=0] node [above] {$\Delta_3$} (n41);
\path[line width=0.5mm,blue] (n41) edge[bend left=0] node [above] {$\Delta_4$} (n51);
\path[line width=0.5mm,blue] (n51) edge[bend left=0] node [above] {} (n61);
\path[line width=0.5mm,blue] (n71) edge[bend left=0] node [below] {} (n51);
\path[line width=0.5mm,blue] (n81) edge[bend left=0] node [above] {} (n51);
\path[line width=0.5mm,blue] (n31) edge[bend right=90] node [above] {$\Delta_4$} (n51);

\node[shape=circle,minimum size = 24pt,inner sep=0.3pt] (m42) at (-2,-3.7) {{ or}};

			\node[shape=circle,minimum size = 24pt,inner sep=0.3pt] (m42) at (-2,-5) {{\large $\Delta_4=T_{20}:$}};
	\node[shape=circle,draw=black,minimum size = 12pt,inner sep=0.3pt] (n111) at (0,-5) {$1$};
	\node[shape=circle,draw=black,minimum size = 12pt,inner sep=0.3pt] (n211) at (1,-5) {$2$};
	\node[shape=circle,draw=black,minimum size = 12pt,inner sep=0.3pt] (n311) at (2,-5) {$3$};
  \node[shape=circle,draw=black,minimum size = 12pt,inner sep=0.3pt] (n411) at (3,-5) {$4$};
	\node[shape=circle,draw=black,minimum size = 12pt,inner sep=0.3pt] (n511) at (4,-5) {$5$};
  \node[shape=circle,draw=black,minimum size = 12pt,inner sep=0.3pt] (n611) at (5,-5) {$6$};
	\node[shape=circle,draw=black,minimum size = 12pt,inner sep=0.3pt] (n711) at (4,-4) {$7$};
  \node[shape=circle,draw=black,minimum size = 12pt,inner sep=0.3pt] (n811) at (4,-6) {$8$};

\path[line width=0.5mm,blue] (n111) edge[bend left=0] node [above] {} (n211);
\path[line width=0.5mm,blue] (n211) edge[bend left=0] node [above] {} (n311);
\path[line width=0.5mm,blue] (n311) edge[bend left=0] node [above] {$\Delta_4$} (n411);
\path[line width=0.5mm,red,dotted] (n411) edge[bend left=0] node [above] {$\Delta_3$} (n511);
\path[line width=0.5mm,blue] (n511) edge[bend left=0] node [above] {} (n611);
\path[line width=0.5mm,blue] (n711) edge[bend left=0] node [below] {} (n511);
\path[line width=0.5mm,blue] (n811) edge[bend left=0] node [above] {} (n511);
\path[line width=0.5mm,blue] (n311) edge[bend right=90] node [above] {$\Delta_4$} (n511);

\end{tikzpicture}
\caption{Reduction of $T_{6}$ to $T_{20}$ \label{T6T20}}
\end{figure}

To summarize, we have the reduction diagram from Figure \ref{redDia}, which shows that a cycle-free $4$-partition of $K_8$ with $\Gamma_4=I_8$ is weakly equivalent to one where $\Delta_4=T_{16}$, or  $\Delta_4=T_{19}$, or $\Delta_4=T_{20}$, which concludes our proof.

\begin{figure}[H]
\centering
\begin{tikzpicture}
  [scale=0.9,auto=left]
	
		\node[shape=circle,draw=black,minimum size = 24pt,inner sep=0.3pt] (n1) at (0,0) {$T_1$};
	\node[shape=circle,draw=black,minimum size = 24pt,inner sep=0.3pt] (n2) at (-2,-1.5) {$T_2$};
	\node[shape=circle,draw=black,minimum size = 24pt,inner sep=0.3pt] (n3) at (2,-1.5) {$T_3$};
	\node[shape=circle,draw=black,minimum size = 24pt,inner sep=0.3pt] (n6) at (-3,-3) {$T_6$};
	\node[shape=circle,draw=black,minimum size = 24pt,inner sep=0.3pt] (n17) at (-1,-3) {$T_{17}$};
	\node[shape=circle,draw=black,minimum size = 24pt,inner sep=0.3pt] (n14) at (1,-3) {$T_{14}$};
	\node[fill=yellow,shape=circle,draw=black,minimum size = 24pt,inner sep=0.3pt] (n16) at (3,-3) {$T_{16}$};
	\node[shape=circle,draw=black,minimum size = 24pt,inner sep=0.3pt] (n23) at (-2,-4.5) {$T_{23}$};
	\node[fill=yellow,shape=circle,draw=black,minimum size = 24pt,inner sep=0.3pt] (n19) at (0,-4.5) {$T_{19}$};
	\node[fill=yellow,shape=circle,draw=black,minimum size = 24pt,inner sep=0.3pt] (n20) at (2,-4.5) {$T_{20}$};
	
\path[line width=0.5mm,black,->] (n1) edge[bend left=0] node [above] {} (n2);
\path[line width=0.5mm,black,->] (n1) edge[bend left=0] node [above] {} (n3);
\path[line width=0.5mm,black,->] (n2) edge[bend left=0] node [above] {} (n6);
\path[line width=0.5mm,black,->] (n2) edge[bend left=0] node [above] {} (n17);
\path[line width=0.5mm,black,->] (n3) edge[bend left=0] node [above] {} (n14);
\path[line width=0.5mm,black,->] (n3) edge[bend left=0] node [above] {} (n16);
\path[line width=0.5mm,black,->] (n17) edge[bend left=0] node [above] {} (n23);
\path[line width=0.5mm,black,->] (n17) edge[bend left=0] node [above] {} (n19);
\path[line width=0.5mm,black,->] (n14) edge[bend left=0] node [above] {} (n19);
\path[line width=0.5mm,black,->] (n14) edge[bend left=0] node [above] {} (n20);
\path[line width=0.5mm,black,->] (n23) edge[bend left=0] node [above] {} (n19);
\path[line width=0.5mm,black,->] (n6) edge[bend right=120] node [above] {} (n20);
        	
\end{tikzpicture}
\caption{Reduction diagram from $T_1=I_8$ to $T_{16}$, $T_{19}$ or $T_{20}$ \label{redDia}}
\end{figure}
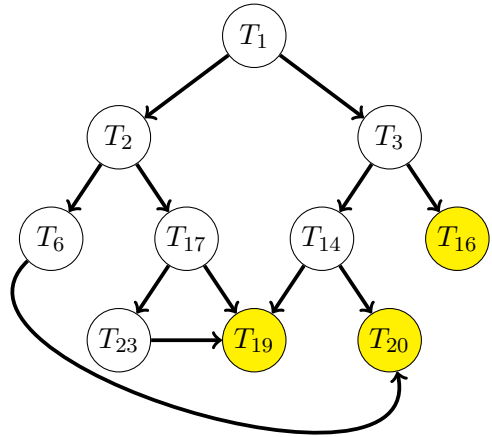

\end{proof}

\begin{remark} Notice that during the proof of Lemma \ref{lemma161920} we showed that if $(\Gamma_1,\Gamma_2,\Gamma_3,\Gamma_4)$ is a cycle-free $4$-partition of the complete graph $K_8$  such that $\Gamma_4$ is the graph $T_{17}$ or $T_{23}$ then it can be reduced to a cycle free $4$-partition $(\Delta_1,\Delta_2,\Delta_3,\Delta_4)$ where  $\Delta_4=T_{19}$ (see also Figure \ref{redDia}).
\label{remark1723}
\end{remark}

\begin{lemma}
Let $(\Gamma_1,\Gamma_2,\Gamma_3,\Gamma_4)$ be a cycle-free $4$-partition of the complete graph $K_8$  such that $\Gamma_4$ is the graph  $T_{16}$. Then $(\Gamma_1,\Gamma_2,\Gamma_3,\Gamma_4)$ is weakly equivalent to a cycle-free $4$-partition $(\Delta_1,\Delta_2,\Delta_3,\Delta_4)$ such that $\Delta_4$ is the graph $T_{19}$.
\end{lemma}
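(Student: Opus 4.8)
The plan is to proceed exactly as in Lemma \ref{lemma161920}, namely by a short sequence of ``bend--move'' involutions displayed through pictures, where in each picture all edges drawn lie in $\Gamma_4$ (resp.\ $\Delta_4$) except a single dotted auxiliary edge lying in another component, whose presence is what activates the involution. First I would fix the labelling of $\Gamma_4=T_{16}$ as in Appendix \ref{appendix2}, so that $T_{16}$ is the caterpillar of diameter $5$ whose two adjacent branch vertices each carry one pendant leaf and one pendant path of length two. The crucial reduction shortcut is Remark \ref{remark1723}: any cycle-free $4$-partition whose fourth component is $T_{17}$ or $T_{23}$ is weakly equivalent to one with fourth component $T_{19}$. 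Hence it suffices to reduce $T_{16}$ to $T_{17}$ or $T_{23}$ rather than to $T_{19}$ directly.

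Concretely, I would first apply the bend move at one branch vertex that transfers its pendant leaf to the neighbouring spine vertex; on the favourable branch this turns $T_{16}$ into (a relabelled copy of) $T_{14}$, i.e.\ the diameter-$5$ caterpillar whose two extra leaves sit at non-adjacent spine positions. On this copy of $T_{14}$ I would then apply a second bend move that absorbs the degree-two link vertex into a pendant path, lengthening a pendant path of length two into one of length three; the favourable branch of this move produces a copy of $T_{17}$ (a degree-$3$ vertex carrying two leaves, adjacent to a degree-$3$ vertex carrying one leaf and a pendant path of length three). Invoking Remark \ref{remark1723} on this $T_{17}$ then yields a weakly equivalent partition with $\Delta_4=T_{19}$, completing that branch via the chain $T_{16}\rightsquigarrow T_{14}\rightsquigarrow T_{17}\rightsquigarrow T_{19}$.

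The delicate point, and the main obstacle, is that $T_{16}$ is symmetric of diameter $5$ whereas $T_{19}$ has a vertex of degree $4$ and diameter $4$, so a short case check shows that \emph{no} single involution sends $T_{16}$ to $T_{19}$: every bend move out of $T_{16}$ either returns $T_{16}$, merges two pendant paths into $T_{20}$ or into a spider, or lengthens the spine to a diameter-$6$ caterpillar. In particular neither outcome of the first involution can be discarded by the ``would create a cycle'' or ``would force a vertex of degree $5$'' arguments used in Figure \ref{T6T20}, because at this stage there is no forced high-degree vertex yet. Thus the second cycle-free branch at each step (the diameter-$6$ caterpillars, and the degree-$4$ trees $T_{20}$ and the spiders) must be carried along, and the real content of the proof is a finite confluence/termination argument — a reduction diagram rooted at $T_{16}$ in the spirit of Figure \ref{redDia} in which every branch is shown to funnel into $T_{17}$, $T_{23}$, or $T_{19}$.

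I expect verifying that this diagram terminates to be the hard part. The diameter-$6$ branches in particular go ``backward'' toward $I_8$, and the cleanest way to dispose of them is probably to note, via Theorem \ref{theI2d} and Lemma \ref{lemma161920}, that such a partition is itself weakly equivalent to one with fourth component among $\{T_{16},T_{19},T_{20}\}$, and then to feed the resulting $T_{16}$ and $T_{20}$ cases back into the same reduction; ensuring this recursion does not cycle indefinitely — i.e.\ that the tightly connected degree-$4$ cluster $\{T_{14},T_{16},T_{17},T_{20},T_{23},\text{spiders}\}$ genuinely collapses onto $T_{19}$ — is the step requiring the most care.
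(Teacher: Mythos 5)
Your plan correctly identifies the central obstruction---no single bend move out of $T_{16}$ has a forced outcome, so both branches of every involution survive---but it does not overcome it, and the fix you sketch is circular. Feeding the spine-lengthening (diameter-$6$) branches back through Theorem \ref{theI2d} and Lemma \ref{lemma161920} returns you to the set $\{T_{16},T_{19},T_{20}\}$, and in particular back to $T_{16}$ itself, which is exactly the case being proved; without a well-founded measure that strictly decreases along the reduction, or an exhaustive verification of all branches (which you do not carry out), the recursion has no termination guarantee, as you yourself concede. So as written this is a proof plan with a genuine gap, not a proof: the confluence/termination of your reduction diagram is precisely the content of the lemma.

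The paper closes this gap by abandoning tree surgery on $\Gamma_4$ and instead exploiting the other three parts. With $T_{16}$ labelled as in Figure \ref{T16}, none of the four edges $(1,5)$, $(1,6)$, $(2,5)$, $(2,6)$ lies in $\Gamma_4$; since only three other parts exist, the pigeonhole principle puts two of these edges in a common part, which up to weak equivalence may be taken to be $\Gamma_3$. Choosing the triangle through such a pair---for instance $(2,5,6)$ when $(2,5),(2,6)\in E(\Gamma_3)$, whose third side $(5,6)$ lies in $\Gamma_4$---makes the involution forced: one of the two candidate outcomes creates a cycle (Figure \ref{T16T23}), so the move necessarily produces $T_{23}$ (or, in the symmetric cases, $T_{17}$), whence Remark \ref{remark1723} finishes. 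The diagonal configurations (the paper's Cases V and VI, where the remaining edge lies in $\Gamma_2$) are handled because there the involution either again forces $T_{23}$ or merely swaps the two edges between $\Gamma_2$ and $\Gamma_3$, landing in a previously settled case. This is the key idea missing from your proposal: by selecting triangles in which two edges already share a color, the cycle-free constraint disambiguates every involution, so no uncontrolled branches ever arise and no termination argument is needed.
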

\begin{proof}  Up to weakly equivalence we may assume  that $\Gamma_4=T_{16}$ with the labeling from Figure \ref{T16}. Notice that none of the edges $(1,5)$, $(1,6)$, $(2,5)$ or $(2,6)$ are in $\Gamma_4$. Since our partition has only three other graphs $\Gamma_1$, $\Gamma_2$ and $\Gamma_3$, we know that at least two of these four edges must belong to the same graph. Up to weakly equivalence we may assume that they belong to $\Gamma_3$. There are six different cases.

\begin{figure}[H]
\centering
\begin{tikzpicture}
  [scale=0.9,auto=left]
	
	\node[shape=circle,minimum size = 24pt,inner sep=0.3pt] (m4) at (-2,0) {{\large $\Gamma_4=T_{16}:$}};
	\node[shape=circle,draw=black,minimum size = 12pt,inner sep=0.3pt] (n1) at (0,0) {$1$};
	\node[shape=circle,draw=black,minimum size = 12pt,inner sep=0.3pt] (n2) at (1,0) {$2$};
	\node[shape=circle,draw=black,minimum size = 12pt,inner sep=0.3pt] (n3) at (2,0) {$3$};
  \node[shape=circle,draw=black,minimum size = 12pt,inner sep=0.3pt] (n4) at (3,0) {$4$};
	\node[shape=circle,draw=black,minimum size = 12pt,inner sep=0.3pt] (n5) at (4,0) {$5$};
  \node[shape=circle,draw=black,minimum size = 12pt,inner sep=0.3pt] (n6) at (5,0) {$6$};
	\node[shape=circle,draw=black,minimum size = 12pt,inner sep=0.3pt] (n7) at (2,1) {$7$};
  \node[shape=circle,draw=black,minimum size = 12pt,inner sep=0.3pt] (n8) at (3,1) {$8$};

\path[line width=0.5mm,blue] (n1) edge[bend left=0] node [above] {} (n2);
\path[line width=0.5mm,blue] (n2) edge[bend left=0] node [above] {} (n3);
\path[line width=0.5mm,blue] (n3) edge[bend left=0] node [above] {} (n4);
\path[line width=0.5mm,blue] (n4) edge[bend left=0] node [above] {} (n5);
\path[line width=0.5mm,blue] (n5) edge[bend left=0] node [above] {} (n6);
\path[line width=0.5mm,blue] (n7) edge[bend left=0] node [below] {} (n3);
\path[line width=0.5mm,blue] (n8) edge[bend left=0] node [above] {} (n4);

\end{tikzpicture}
\caption{The graph $\Gamma_4=T_{16}$ \label{T16}}
\end{figure}
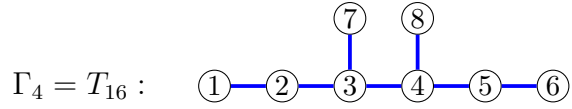

Case I: Suppose that the edges $(2,5)$ and $(2,6)\in E(\Gamma_3)$. From Figure \ref{T16T23} we can see that using the involution $(2,5,6)$  our partition can be reduced to one with $\Delta_4=T_{23}$ (the other possible partition in Figure \ref{T16T23} is not cycle-free). From Remark  \ref{remark1723} we know that this partition can be reduced further to one with $\Delta_4=T_{19}$.

\begin{figure}[H]
\centering
\begin{tikzpicture}
  [scale=0.9,auto=left]
	
	\node[shape=circle,minimum size = 24pt,inner sep=0.3pt] (m4) at (-2,0) {{\large $\Gamma_4=T_{16}:$}};
	\node[shape=circle,draw=black,minimum size = 12pt,inner sep=0.3pt] (n1) at (0,0) {$1$};
	\node[shape=circle,draw=black,minimum size = 12pt,inner sep=0.3pt] (n2) at (1,0) {$2$};
	\node[shape=circle,draw=black,minimum size = 12pt,inner sep=0.3pt] (n3) at (2,0) {$3$};
  \node[shape=circle,draw=black,minimum size = 12pt,inner sep=0.3pt] (n4) at (3,0) {$4$};
	\node[shape=circle,draw=black,minimum size = 12pt,inner sep=0.3pt] (n5) at (4,0) {$5$};
  \node[shape=circle,draw=black,minimum size = 12pt,inner sep=0.3pt] (n6) at (5,0) {$6$};
	\node[shape=circle,draw=black,minimum size = 12pt,inner sep=0.3pt] (n7) at (2,1) {$7$};
  \node[shape=circle,draw=black,minimum size = 12pt,inner sep=0.3pt] (n8) at (3,1) {$8$};

\node[shape=circle,minimum size = 24pt,inner sep=0.3pt] (m4) at (-2,-1.8) {{\large $\downarrow$}};
\path[line width=0.5mm,blue] (n1) edge[bend left=0] node [above] {} (n2);
\path[line width=0.5mm,blue] (n2) edge[bend left=0] node [above] {} (n3);
\path[line width=0.5mm,blue] (n3) edge[bend left=0] node [above] {} (n4);
\path[line width=0.5mm,blue] (n4) edge[bend left=0] node [above] {} (n5);
\path[line width=0.5mm,blue] (n5) edge[bend left=0] node [above] {$\Gamma_4$} (n6);
\path[line width=0.5mm,blue] (n7) edge[bend left=0] node [below] {} (n3);
\path[line width=0.5mm,blue] (n8) edge[bend left=0] node [above] {} (n4);
\path[line width=0.5mm,red,dotted] (n2) edge[bend right=90] node [below] {$\Gamma_3$} (n6);
\path[line width=0.5mm,red,dotted] (n2) edge[bend right=90] node [above] {$\Gamma_3$} (n5);

	\node[shape=circle,minimum size = 24pt,inner sep=0.3pt] (m41) at (-1.9,-3.5) {{\large Not cycle-free :}};
	\node[shape=circle,draw=black,minimum size = 12pt,inner sep=0.3pt] (n11) at (0,-3.5) {$1$};
	\node[shape=circle,draw=black,minimum size = 12pt,inner sep=0.3pt] (n21) at (1,-3.5) {$2$};
	\node[shape=circle,draw=black,minimum size = 12pt,inner sep=0.3pt] (n31) at (2,-3.5) {$3$};
  \node[shape=circle,draw=black,minimum size = 12pt,inner sep=0.3pt] (n41) at (3,-3.5) {$4$};
	\node[shape=circle,draw=black,minimum size = 12pt,inner sep=0.3pt] (n51) at (4,-3.5) {$5$};
  \node[shape=circle,draw=black,minimum size = 12pt,inner sep=0.3pt] (n61) at (5,-3.5) {$6$};
	\node[shape=circle,draw=black,minimum size = 12pt,inner sep=0.3pt] (n71) at (2,-2.5) {$7$};
  \node[shape=circle,draw=black,minimum size = 12pt,inner sep=0.3pt] (n81) at (3,-2.5) {$8$};

\path[line width=0.5mm,blue] (n11) edge[bend left=0] node [above] {} (n21);
\path[line width=0.5mm,blue] (n21) edge[bend left=0] node [above] {} (n31);
\path[line width=0.5mm,blue] (n31) edge[bend left=0] node [above] {} (n41);
\path[line width=0.5mm,blue] (n41) edge[bend left=0] node [above] {} (n51);
\path[line width=0.5mm,red,dotted] (n51) edge[bend left=0] node [above] {$\Delta_3$} (n61);
\path[line width=0.5mm,blue] (n71) edge[bend left=0] node [below] {} (n31);
\path[line width=0.5mm,blue] (n81) edge[bend left=0] node [above] {} (n41);
\path[line width=0.5mm,red,dotted] (n21) edge[bend right=90] node [below] {$\Delta_3$} (n61);
\path[line width=0.5mm,blue] (n21) edge[bend right=90] node [above] {$\Delta_4$} (n51);

\node[shape=circle,minimum size = 24pt,inner sep=0.3pt] (m42) at (-2,-5.2) {{ or}};

	\node[shape=circle,minimum size = 24pt,inner sep=0.3pt] (m41) at (-2,-7) {{\large $\Delta_4=T_{23}:$}};
	\node[shape=circle,draw=black,minimum size = 12pt,inner sep=0.3pt] (n111) at (0,-7) {$1$};
	\node[shape=circle,draw=black,minimum size = 12pt,inner sep=0.3pt] (n211) at (1,-7) {$2$};
	\node[shape=circle,draw=black,minimum size = 12pt,inner sep=0.3pt] (n311) at (2,-7) {$3$};
  \node[shape=circle,draw=black,minimum size = 12pt,inner sep=0.3pt] (n411) at (3,-7) {$4$};
	\node[shape=circle,draw=black,minimum size = 12pt,inner sep=0.3pt] (n511) at (4,-7) {$5$};
  \node[shape=circle,draw=black,minimum size = 12pt,inner sep=0.3pt] (n611) at (5,-7) {$6$};
	\node[shape=circle,draw=black,minimum size = 12pt,inner sep=0.3pt] (n711) at (2,-6) {$7$};
  \node[shape=circle,draw=black,minimum size = 12pt,inner sep=0.3pt] (n811) at (3,-6) {$8$};

\path[line width=0.5mm,blue] (n111) edge[bend left=0] node [above] {} (n211);
\path[line width=0.5mm,blue] (n211) edge[bend left=0] node [above] {} (n311);
\path[line width=0.5mm,blue] (n311) edge[bend left=0] node [above] {} (n411);
\path[line width=0.5mm,blue] (n411) edge[bend left=0] node [above] {} (n511);
\path[line width=0.5mm,red,dotted] (n511) edge[bend left=0] node [above] {$\Delta_3$} (n611);
\path[line width=0.5mm,blue] (n711) edge[bend left=0] node [below] {} (n311);
\path[line width=0.5mm,blue] (n811) edge[bend left=0] node [above] {} (n411);
\path[line width=0.5mm,blue] (n211) edge[bend right=90] node [below] {$\Delta_4$} (n611);
\path[line width=0.5mm,red,dotted] (n211) edge[bend right=90] node [above] {$\Delta_3$} (n511);

\end{tikzpicture}
\caption{Reduction of $T_{16}$ when $(2,5)$, $(2,6)\in E(\Gamma_3)$\label{T16T23}}
\end{figure}
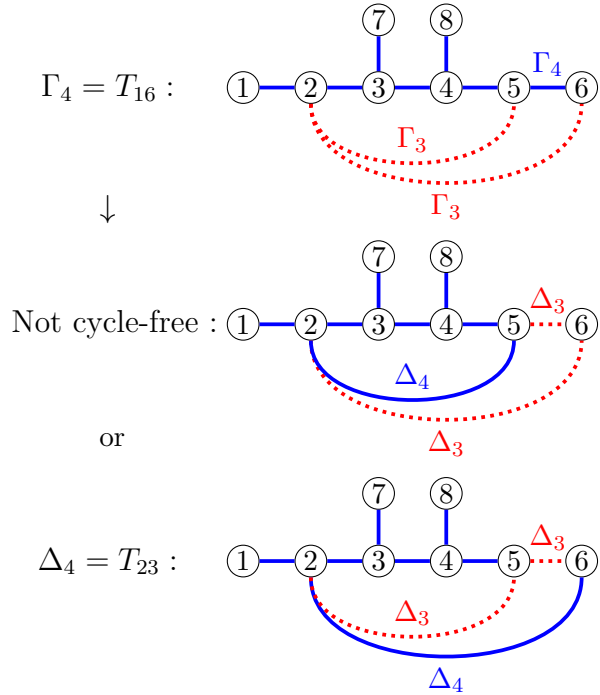

Case II: If  $(1,5)$, $(2,5)\in E(\Gamma_3)$, a  similar argument to Case I shows that  our partition can be reduced to one such that $\Delta_4$ is the graph $T_{19}$.

Case III: If $(1,6)$, $(2,6)\in E(\Gamma_3)$  then from  Figure \ref{T16T17} we can see that using the involution $(1,2,6)$  our partition can be reduced to one with $\Delta_4=T_{17}$. From Remark \ref{remark1723} we know that this partition can be reduced further to one with $\Delta_4=T_{19}$.

\begin{figure}[h!]
\centering
\begin{tikzpicture}
  [scale=0.9,auto=left]
	
	\node[shape=circle,minimum size = 24pt,inner sep=0.3pt] (m4) at (-2,0) {{\large $\Gamma_4=T_{16}:$}};
	\node[shape=circle,draw=black,minimum size = 12pt,inner sep=0.3pt] (n1) at (0,0) {$1$};
	\node[shape=circle,draw=black,minimum size = 12pt,inner sep=0.3pt] (n2) at (1,0) {$2$};
	\node[shape=circle,draw=black,minimum size = 12pt,inner sep=0.3pt] (n3) at (2,0) {$3$};
  \node[shape=circle,draw=black,minimum size = 12pt,inner sep=0.3pt] (n4) at (3,0) {$4$};
	\node[shape=circle,draw=black,minimum size = 12pt,inner sep=0.3pt] (n5) at (4,0) {$5$};
  \node[shape=circle,draw=black,minimum size = 12pt,inner sep=0.3pt] (n6) at (5,0) {$6$};
	\node[shape=circle,draw=black,minimum size = 12pt,inner sep=0.3pt] (n7) at (2,1) {$7$};
  \node[shape=circle,draw=black,minimum size = 12pt,inner sep=0.3pt] (n8) at (3,1) {$8$};

\node[shape=circle,minimum size = 24pt,inner sep=0.3pt] (m4) at (-2,-1.8) {{\large $\downarrow$}};
\path[line width=0.5mm,blue] (n1) edge[bend left=0] node [above] {$\Gamma_4$} (n2);
\path[line width=0.5mm,blue] (n2) edge[bend left=0] node [above] {} (n3);
\path[line width=0.5mm,blue] (n3) edge[bend left=0] node [above] {} (n4);
\path[line width=0.5mm,blue] (n4) edge[bend left=0] node [above] {} (n5);
\path[line width=0.5mm,blue] (n5) edge[bend left=0] node [above] {} (n6);
\path[line width=0.5mm,blue] (n7) edge[bend left=0] node [below] {} (n3);
\path[line width=0.5mm,blue] (n8) edge[bend left=0] node [above] {} (n4);
\path[line width=0.5mm,red,dotted] (n2) edge[bend right=90] node [above] {$\Gamma_3$} (n6);
\path[line width=0.5mm,red,dotted] (n1) edge[bend right=90] node [below] {$\Gamma_3$} (n6);

	\node[shape=circle,minimum size = 24pt,inner sep=0.3pt] (m41) at (-2,-3.5) {{\large $\Delta_4=T_{17}$ :}};
	\node[shape=circle,draw=black,minimum size = 12pt,inner sep=0.3pt] (n11) at (0,-3.5) {$1$};
	\node[shape=circle,draw=black,minimum size = 12pt,inner sep=0.3pt] (n21) at (1,-3.5) {$2$};
	\node[shape=circle,draw=black,minimum size = 12pt,inner sep=0.3pt] (n31) at (2,-3.5) {$3$};
  \node[shape=circle,draw=black,minimum size = 12pt,inner sep=0.3pt] (n41) at (3,-3.5) {$4$};
	\node[shape=circle,draw=black,minimum size = 12pt,inner sep=0.3pt] (n51) at (4,-3.5) {$5$};
  \node[shape=circle,draw=black,minimum size = 12pt,inner sep=0.3pt] (n61) at (5,-3.5) {$6$};
	\node[shape=circle,draw=black,minimum size = 12pt,inner sep=0.3pt] (n71) at (2,-2.5) {$7$};
  \node[shape=circle,draw=black,minimum size = 12pt,inner sep=0.3pt] (n81) at (3,-2.5) {$8$};

\path[line width=0.5mm,red,dotted] (n11) edge[bend left=0] node [above] {$\Delta_3$} (n21);
\path[line width=0.5mm,blue] (n21) edge[bend left=0] node [above] {} (n31);
\path[line width=0.5mm,blue] (n31) edge[bend left=0] node [above] {} (n41);
\path[line width=0.5mm,blue] (n41) edge[bend left=0] node [above] {} (n51);
\path[line width=0.5mm,blue] (n51) edge[bend left=0] node [above] {} (n61);
\path[line width=0.5mm,blue] (n71) edge[bend left=0] node [below] {} (n31);
\path[line width=0.5mm,blue] (n81) edge[bend left=0] node [above] {} (n41);
\path[line width=0.5mm,red,dotted] (n21) edge[bend right=90] node [above] {$\Delta_3$} (n61);
\path[line width=0.5mm,blue] (n11) edge[bend right=90] node [below] {$\Delta_4$} (n61);

\node[shape=circle,minimum size = 24pt,inner sep=0.3pt] (m42) at (-2,-5.2) {{ or}};

	\node[shape=circle,minimum size = 24pt,inner sep=0.3pt] (m41) at (-2.4,-7) {{\large Not cycle-free :}};
	\node[shape=circle,draw=black,minimum size = 12pt,inner sep=0.3pt] (n111) at (0,-7) {$1$};
	\node[shape=circle,draw=black,minimum size = 12pt,inner sep=0.3pt] (n211) at (1,-7) {$2$};
	\node[shape=circle,draw=black,minimum size = 12pt,inner sep=0.3pt] (n311) at (2,-7) {$3$};
  \node[shape=circle,draw=black,minimum size = 12pt,inner sep=0.3pt] (n411) at (3,-7) {$4$};
	\node[shape=circle,draw=black,minimum size = 12pt,inner sep=0.3pt] (n511) at (4,-7) {$5$};
  \node[shape=circle,draw=black,minimum size = 12pt,inner sep=0.3pt] (n611) at (5,-7) {$6$};
	\node[shape=circle,draw=black,minimum size = 12pt,inner sep=0.3pt] (n711) at (2,-6) {$7$};
  \node[shape=circle,draw=black,minimum size = 12pt,inner sep=0.3pt] (n811) at (3,-6) {$8$};

\path[line width=0.5mm,red,dotted] (n111) edge[bend left=0] node [above] {$\Delta_3$} (n211);
\path[line width=0.5mm,blue] (n211) edge[bend left=0] node [above] {} (n311);
\path[line width=0.5mm,blue] (n311) edge[bend left=0] node [above] {} (n411);
\path[line width=0.5mm,blue] (n411) edge[bend left=0] node [above] {} (n511);
\path[line width=0.5mm,blue] (n511) edge[bend left=0] node [above] {} (n611);
\path[line width=0.5mm,blue] (n711) edge[bend left=0] node [below] {} (n311);
\path[line width=0.5mm,blue] (n811) edge[bend left=0] node [above] {} (n411);
\path[line width=0.5mm,blue] (n211) edge[bend right=90] node [above] {$\Delta_4$} (n611);
\path[line width=0.5mm,red,dotted] (n111) edge[bend right=90] node [below] {$\Delta_3$} (n611);

\end{tikzpicture}
\caption{Reduction of $T_{16}$ when $(1,6)$, $(2,6)\in E(\Gamma_3)$\label{T16T17}}
\end{figure}
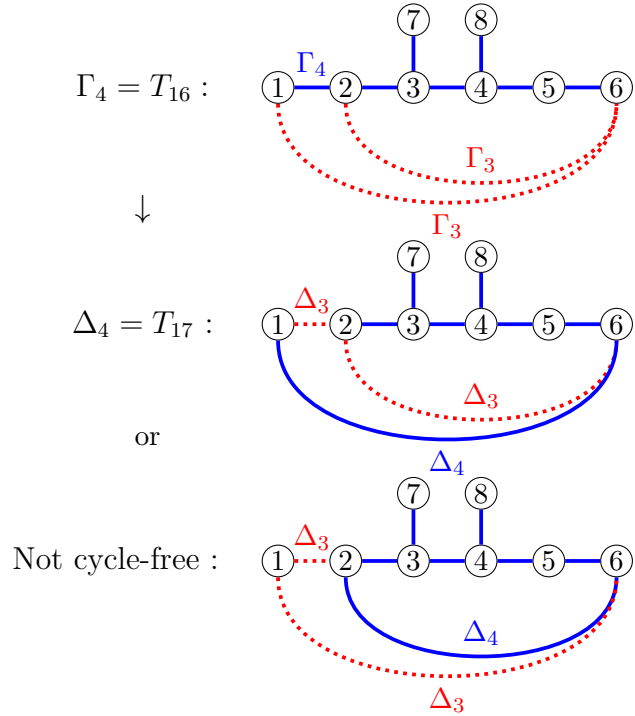

Case IV: If $(1,6)$, $(1,5)\in E(\Gamma_3)$, a similar argument to the one for Case III shows that our partition can be reduced to one such that $\Delta_4$ is the graph $T_{19}$.

Case V:  Assume that $(1,5), (2,6)\in E(\Gamma_3)$, and $(2,5)\in E(\Gamma_2)$. From Figure \ref{T16T23G23} we can see that using the involution $(2,5,6)$ we can either reduce our partition to a partition with $\Delta_4=T_{23}$ (which by  Remark  \ref{remark1723} can be reduced further to one with $\Delta_4=T_{19}$), or we can interchange the edges $(2,5)$ and $(2,6)$.  This gives us that $(1,5), (2,5)\in E(\Delta_3)$, and  $(2,6)\in E(\Delta_2)$, a situation that was already covered in Case II.

\begin{figure}[H]
\centering
\begin{tikzpicture}
  [scale=0.9,auto=left]
	
	\node[shape=circle,minimum size = 24pt,inner sep=0.3pt] (m4) at (-1,0) {{\large $T_{16}$ :}};
	\node[shape=circle,draw=black,minimum size = 12pt,inner sep=0.3pt] (n1) at (0,0) {$1$};
	\node[shape=circle,draw=black,minimum size = 12pt,inner sep=0.3pt] (n2) at (1,0) {$2$};
	\node[shape=circle,draw=black,minimum size = 12pt,inner sep=0.3pt] (n3) at (2,0) {$3$};
  \node[shape=circle,draw=black,minimum size = 12pt,inner sep=0.3pt] (n4) at (3,0) {$4$};
	\node[shape=circle,draw=black,minimum size = 12pt,inner sep=0.3pt] (n5) at (4,0) {$5$};
  \node[shape=circle,draw=black,minimum size = 12pt,inner sep=0.3pt] (n6) at (5,0) {$6$};
	\node[shape=circle,draw=black,minimum size = 12pt,inner sep=0.3pt] (n7) at (2,1) {$7$};
  \node[shape=circle,draw=black,minimum size = 12pt,inner sep=0.3pt] (n8) at (3,1) {$8$};

\node[shape=circle,minimum size = 24pt,inner sep=0.3pt] (m4) at (-1,-1.8) {{\large $\downarrow$}};
\path[line width=0.5mm,blue] (n1) edge[bend left=0] node [above] {} (n2);
\path[line width=0.5mm,blue] (n2) edge[bend left=0] node [above] {} (n3);
\path[line width=0.5mm,blue] (n3) edge[bend left=0] node [above] {} (n4);
\path[line width=0.5mm,blue] (n4) edge[bend left=0] node [above] {} (n5);
\path[line width=0.5mm,blue] (n5) edge[bend left=0] node [above] {$\Gamma_4$} (n6);
\path[line width=0.5mm,blue] (n7) edge[bend left=0] node [below] {} (n3);
\path[line width=0.5mm,blue] (n8) edge[bend left=0] node [above] {} (n4);
\path[line width=0.5mm,red,dotted] (n2) edge[bend right=90] node [below] {$\Gamma_3$} (n6);
\path[line width=0.5mm,green,dash dot] (n2) edge[bend right=90] node [above] {$\Gamma_2$} (n5);
\path[line width=0.5mm,red,dotted] (n1) edge[bend left=90] node [above] {$\Gamma_3$} (n5);

	\node[shape=circle,minimum size = 24pt,inner sep=0.3pt] (m41) at (-2,-3.5) {{\large Not cycle-free :}};
	\node[shape=circle,draw=black,minimum size = 12pt,inner sep=0.3pt] (n11) at (0,-3.5) {$1$};
	\node[shape=circle,draw=black,minimum size = 12pt,inner sep=0.3pt] (n21) at (1,-3.5) {$2$};
	\node[shape=circle,draw=black,minimum size = 12pt,inner sep=0.3pt] (n31) at (2,-3.5) {$3$};
  \node[shape=circle,draw=black,minimum size = 12pt,inner sep=0.3pt] (n41) at (3,-3.5) {$4$};
	\node[shape=circle,draw=black,minimum size = 12pt,inner sep=0.3pt] (n51) at (4,-3.5) {$5$};
  \node[shape=circle,draw=black,minimum size = 12pt,inner sep=0.3pt] (n61) at (5,-3.5) {$6$};
	\node[shape=circle,draw=black,minimum size = 12pt,inner sep=0.3pt] (n71) at (2,-2.5) {$7$};
  \node[shape=circle,draw=black,minimum size = 12pt,inner sep=0.3pt] (n81) at (3,-2.5) {$8$};

\path[line width=0.5mm,blue] (n11) edge[bend left=0] node [above] {} (n21);
\path[line width=0.5mm,blue] (n21) edge[bend left=0] node [above] {} (n31);
\path[line width=0.5mm,blue] (n31) edge[bend left=0] node [above] {} (n41);
\path[line width=0.5mm,blue] (n41) edge[bend left=0] node [above] {} (n51);
\path[line width=0.5mm,green,dash dot] (n51) edge[bend left=0] node [above] {$\Delta_2$} (n61);
\path[line width=0.5mm,blue] (n71) edge[bend left=0] node [below] {} (n31);
\path[line width=0.5mm,blue] (n81) edge[bend left=0] node [above] {} (n41);
\path[line width=0.5mm,red,dotted] (n21) edge[bend right=90] node [below] {$\Delta_3$} (n61);
\path[line width=0.5mm,blue] (n21) edge[bend right=90] node [above] {$\Delta_4$} (n51);
\path[line width=0.5mm,red,dotted] (n11) edge[bend left=90] node [above] {$\Delta_3$} (n51);
	
\node[shape=circle,minimum size = 24pt,inner sep=0.3pt] (m421) at (-1,-5.2) {{ or}};

\node[shape=circle,minimum size = 24pt,inner sep=0.3pt] (m411) at (-2,-7) {{\large Not cycle-free :}};
	\node[shape=circle,draw=black,minimum size = 12pt,inner sep=0.3pt] (n111) at (0,-7) {$1$};
	\node[shape=circle,draw=black,minimum size = 12pt,inner sep=0.3pt] (n211) at (1,-7) {$2$};
	\node[shape=circle,draw=black,minimum size = 12pt,inner sep=0.3pt] (n311) at (2,-7) {$3$};
  \node[shape=circle,draw=black,minimum size = 12pt,inner sep=0.3pt] (n411) at (3,-7) {$4$};
	\node[shape=circle,draw=black,minimum size = 12pt,inner sep=0.3pt] (n511) at (4,-7) {$5$};
  \node[shape=circle,draw=black,minimum size = 12pt,inner sep=0.3pt] (n611) at (5,-7) {$6$};
	\node[shape=circle,draw=black,minimum size = 12pt,inner sep=0.3pt] (n711) at (2,-6) {$7$};
  \node[shape=circle,draw=black,minimum size = 12pt,inner sep=0.3pt] (n811) at (3,-6) {$8$};

\path[line width=0.5mm,blue] (n111) edge[bend left=0] node [above] {} (n211);
\path[line width=0.5mm,blue] (n211) edge[bend left=0] node [above] {} (n311);
\path[line width=0.5mm,blue] (n311) edge[bend left=0] node [above] {} (n411);
\path[line width=0.5mm,blue] (n411) edge[bend left=0] node [above] {} (n511);
\path[line width=0.5mm,red,dotted] (n511) edge[bend left=0] node [above] {$\Delta_3$} (n611);
\path[line width=0.5mm,blue] (n711) edge[bend left=0] node [below] {} (n311);
\path[line width=0.5mm,blue] (n811) edge[bend left=0] node [above] {} (n411);
\path[line width=0.5mm,green,dash dot] (n211) edge[bend right=90] node [below] {$\Delta_2$} (n611);
\path[line width=0.5mm,blue] (n211) edge[bend right=90] node [above] {$\Delta_4$} (n511);
\path[line width=0.5mm,red,dotted] (n111) edge[bend left=90] node [above] {$\Delta_3$} (n511);
	
\node[shape=circle,minimum size = 24pt,inner sep=0.3pt] (m422) at (-1,-8.7) {{ or}};

	\node[shape=circle,minimum size = 24pt,inner sep=0.3pt] (m41) at (-1,-10.5) {{\large $T_{23}$ :}};
	\node[shape=circle,draw=black,minimum size = 12pt,inner sep=0.3pt] (n1112) at (0,-10.5) {$1$};
	\node[shape=circle,draw=black,minimum size = 12pt,inner sep=0.3pt] (n2112) at (1,-10.5) {$2$};
	\node[shape=circle,draw=black,minimum size = 12pt,inner sep=0.3pt] (n3112) at (2,-10.5) {$3$};
  \node[shape=circle,draw=black,minimum size = 12pt,inner sep=0.3pt] (n4112) at (3,-10.5) {$4$};
	\node[shape=circle,draw=black,minimum size = 12pt,inner sep=0.3pt] (n5112) at (4,-10.5) {$5$};
  \node[shape=circle,draw=black,minimum size = 12pt,inner sep=0.3pt] (n6112) at (5,-10.5) {$6$};
	\node[shape=circle,draw=black,minimum size = 12pt,inner sep=0.3pt] (n7112) at (2,-9.5) {$7$};
  \node[shape=circle,draw=black,minimum size = 12pt,inner sep=0.3pt] (n8112) at (3,-9.5) {$8$};

\path[line width=0.5mm,blue] (n1112) edge[bend left=0] node [above] {} (n2112);
\path[line width=0.5mm,blue] (n2112) edge[bend left=0] node [above] {} (n3112);
\path[line width=0.5mm,blue] (n3112) edge[bend left=0] node [above] {} (n4112);
\path[line width=0.5mm,blue] (n4112) edge[bend left=0] node [above] {} (n5112);
\path[line width=0.5mm,green,dash dot] (n5112) edge[bend left=0] node [above] {$\Delta_2$} (n6112);
\path[line width=0.5mm,blue] (n7112) edge[bend left=0] node [below] {} (n3112);
\path[line width=0.5mm,blue] (n8112) edge[bend left=0] node [above] {} (n4112);
\path[line width=0.5mm,blue] (n2112) edge[bend right=90] node [below] {$\Delta_4$} (n6112);
\path[line width=0.5mm,red,dotted] (n2112) edge[bend right=90] node [above] {$\Delta_3$} (n5112);
\path[line width=0.5mm,red,dotted] (n1112) edge[bend left=90] node [above] {$\Delta_3$} (n5112);
	
\node[shape=circle,minimum size = 24pt,inner sep=0.3pt] (m423) at (-1,-12.2) {{ or}};

	\node[shape=circle,minimum size = 24pt,inner sep=0.3pt] (m413) at (-1,-14) {{\large $T_{23}$ :}};
	\node[shape=circle,draw=black,minimum size = 12pt,inner sep=0.3pt] (n1113) at (0,-14) {$1$};
	\node[shape=circle,draw=black,minimum size = 12pt,inner sep=0.3pt] (n2113) at (1,-14) {$2$};
	\node[shape=circle,draw=black,minimum size = 12pt,inner sep=0.3pt] (n3113) at (2,-14) {$3$};
  \node[shape=circle,draw=black,minimum size = 12pt,inner sep=0.3pt] (n4113) at (3,-14) {$4$};
	\node[shape=circle,draw=black,minimum size = 12pt,inner sep=0.3pt] (n5113) at (4,-14) {$5$};
  \node[shape=circle,draw=black,minimum size = 12pt,inner sep=0.3pt] (n6113) at (5,-14) {$6$};
	\node[shape=circle,draw=black,minimum size = 12pt,inner sep=0.3pt] (n7113) at (2,-13) {$7$};
  \node[shape=circle,draw=black,minimum size = 12pt,inner sep=0.3pt] (n8113) at (3,-13) {$8$};

\path[line width=0.5mm,blue] (n1113) edge[bend left=0] node [above] {} (n2113);
\path[line width=0.5mm,blue] (n2113) edge[bend left=0] node [above] {} (n3113);
\path[line width=0.5mm,blue] (n3113) edge[bend left=0] node [above] {} (n4113);
\path[line width=0.5mm,blue] (n4113) edge[bend left=0] node [above] {} (n5113);
\path[line width=0.5mm,red,dotted] (n5113) edge[bend left=0] node [above] {$\Delta_3$} (n6113);
\path[line width=0.5mm,blue] (n7113) edge[bend left=0] node [below] {} (n3113);
\path[line width=0.5mm,blue] (n8113) edge[bend left=0] node [above] {} (n4113);
\path[line width=0.5mm,blue] (n2113) edge[bend right=90] node [below] {$\Delta_4$} (n6113);
\path[line width=0.5mm,green,dash dot] (n2113) edge[bend right=90] node [above] {$\Delta_2$} (n5113);
\path[line width=0.5mm,red,dotted] (n1113) edge[bend left=90] node [above] {$\Delta_3$} (n5113);

\node[shape=circle,minimum size = 24pt,inner sep=0.3pt] (m423) at (-1,-15.7) {{ or}};

\node[shape=circle,minimum size = 24pt,inner sep=0.3pt] (m4114) at (-3,-17.5) {{\large Switch $(2,5)$ with $(2,6)$ :}};

	\node[shape=circle,draw=black,minimum size = 12pt,inner sep=0.3pt] (n14) at (0,-17.5) {$1$};
	\node[shape=circle,draw=black,minimum size = 12pt,inner sep=0.3pt] (n24) at (1,-17.5) {$2$};
	\node[shape=circle,draw=black,minimum size = 12pt,inner sep=0.3pt] (n34) at (2,-17.5) {$3$};
  \node[shape=circle,draw=black,minimum size = 12pt,inner sep=0.3pt] (n44) at (3,-17.5) {$4$};
	\node[shape=circle,draw=black,minimum size = 12pt,inner sep=0.3pt] (n54) at (4,-17.5) {$5$};
  \node[shape=circle,draw=black,minimum size = 12pt,inner sep=0.3pt] (n64) at (5,-17.5) {$6$};
	\node[shape=circle,draw=black,minimum size = 12pt,inner sep=0.3pt] (n74) at (2,-16.5) {$7$};
  \node[shape=circle,draw=black,minimum size = 12pt,inner sep=0.3pt] (n84) at (3,-16.5) {$8$};

\node[shape=circle,minimum size = 24pt,inner sep=0.3pt] (m44) at (-1,-1.8) {{\large $\downarrow$}};
\path[line width=0.5mm,blue] (n14) edge[bend left=0] node [above] {} (n24);
\path[line width=0.5mm,blue] (n24) edge[bend left=0] node [above] {} (n34);
\path[line width=0.5mm,blue] (n34) edge[bend left=0] node [above] {} (n44);
\path[line width=0.5mm,blue] (n44) edge[bend left=0] node [above] {} (n54);
\path[line width=0.5mm,blue] (n54) edge[bend left=0] node [above] {$\Delta_4$} (n64);
\path[line width=0.5mm,blue] (n74) edge[bend left=0] node [below] {} (n34);
\path[line width=0.5mm,blue] (n84) edge[bend left=0] node [above] {} (n44);
\path[line width=0.5mm,green,dash dot] (n24) edge[bend right=90] node [below] {$\Delta_2$} (n64);
\path[line width=0.5mm,red,dotted] (n24) edge[bend right=90] node [above] {$\Delta_3$} (n54);
\path[line width=0.5mm,red,dotted] (n14) edge[bend left=90] node [above] {$\Delta_3$} (n54);

\end{tikzpicture}
\caption{Reduction of $T_{16}$ when $(2,5)\in E(\Gamma_2)$, $(2,6)\in E(\Gamma_3)$\label{T16T23G23}}
\end{figure}
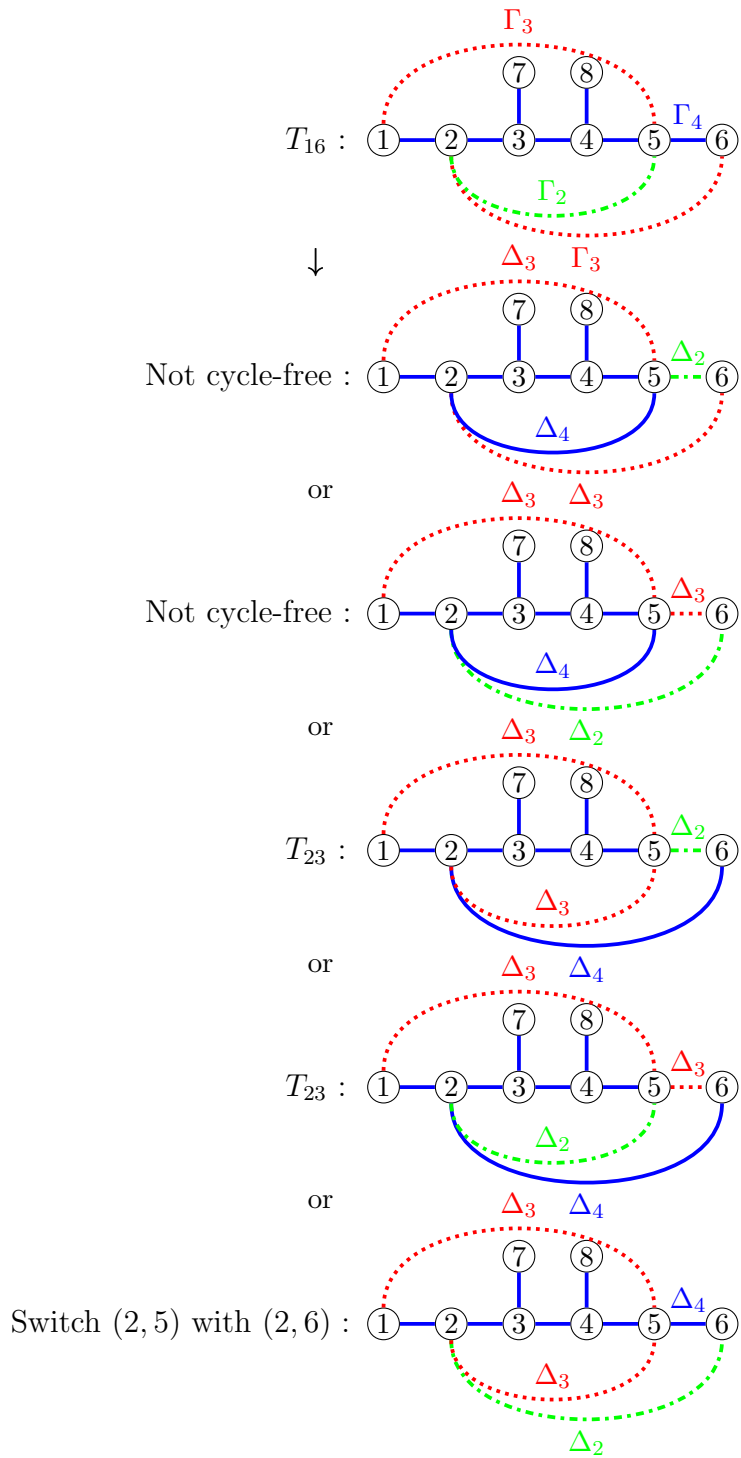

Case VI: If $(1,6), (2,5)\in E(\Gamma_3)$, and $(1,5)\in E(\Gamma_2)$, then a similar argument to Case V shows that  our partition can be reduced to one such that $\Delta_4$ is the graph $T_{19}$, which concludes our proof.

\end{proof}

\begin{lemma}
Let $(\Gamma_1,\Gamma_2,\Gamma_3,\Gamma_4)$ be a cycle-free $4$-partition of the complete graph $K_8$  such that $\Gamma_4$ is the graph  $T_{20}$. Then $(\Gamma_1,\Gamma_2,\Gamma_3,\Gamma_4)$ is weakly  equivalent to a cycle-free $4$-partition $(\Omega_1,\Omega_2,\Omega_3,\Omega_4)$ such that $\Omega_4$ is the graph $T_{19}$.
\end{lemma}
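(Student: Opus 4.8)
The plan is to argue exactly as in the proof of the preceding lemma (the reduction of $T_{16}$): fix a convenient labeling of $\Gamma_4=T_{20}$ and then split into a small number of cases according to how the edges of $K_8$ not lying in $\Gamma_4$ are distributed among $\Gamma_1,\Gamma_2,\Gamma_3$. Write $P$ for the unique vertex of degree $4$ of $T_{20}$, $Q$ for its unique vertex of degree $3$ (so $P$ and $Q$ are adjacent), and $Q\!-\!C\!-\!e$ for the path of length two which, in $T_{20}$, is attached at $Q$; the three remaining leaves $a,b,c$ hang at $P$. The tree $T_{19}$ is obtained from $T_{20}$ by re-attaching the arm $C\!-\!e$ at $P$ instead of at $Q$ while transferring one leaf from $P$ to $Q$.

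The first thing I would record is a degree count. Since each $\Gamma_i$ is a spanning tree of $K_8$, every vertex has total degree $7$ spread over the four trees with each contribution at least $1$, so no $\Gamma_i$ can have a vertex of degree $\geq 5$. In particular each of $\Gamma_1,\Gamma_2,\Gamma_3$ meets $P$ in exactly one edge, and at $Q$ their degrees sum to $4$ and hence are distributed as $(2,1,1)$. Reordering the first three graphs by weak equivalence, I may assume $\Gamma_3$ is the one with two edges at $Q$. The four edges of $K_8$ incident to $Q$ that do not lie in $\Gamma_4$ are $(Q,a),(Q,b),(Q,c),(Q,e)$, and $\Gamma_3$ contains exactly two of them; this yields six cases, matching the structure of the previous proof.

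The main move is the involution $(P,Q,\ell)$, where $\ell\in\{a,b,c\}$ is a leaf of $P$ with $(Q,\ell)\in\Gamma_3$. Among the three edges $(P,Q),(P,\ell),(Q,\ell)$ of this triangle, the first two lie in $\Gamma_4$ and the third in $\Gamma_3$, so by Lemma \ref{keylemma} the involution returns the unique other cycle-free $4$-partition obtained by recoloring these edges. The recoloring that moves $(P,\ell)$ out of $\Gamma_4$ and $(Q,\ell)$ into it produces precisely $T_{19}$ (now $Q$ has degree $4$ and carries the arm), while the other recoloring produces a tree with two vertices of degree three; exactly one is cycle-free, and it is the desired $T_{19}$ provided $P$ lies in the component of $Q$ after deleting $(Q,\ell)$ from $\Gamma_3$. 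Because $\Gamma_3$ has degree $2$ at $Q$, one of its two edges at $Q$ leads away from the path joining $P$ to $Q$; choosing $\ell$ to be that endpoint forces the component condition. In the three cases where both edges of $\Gamma_3$ at $Q$ are $P$-leaves, and in the subcases of the remaining three where the $P$-leaf is the endpoint leading away from $P$, this single involution already delivers $T_{19}$, and I would present each such reduction as a figure in the style of the earlier ones.

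The hard part, which I expect to be the main obstacle, is the subcase in which the second edge of $\Gamma_3$ at $Q$ is $(Q,e)$ and the $P$-leaf $\ell$ with $(Q,\ell)\in\Gamma_3$ lies on the $P$-side of $Q$: then the triangle $(P,Q,\ell)$ returns the tree $T_{14}$ with its two vertices of degree three rather than $T_{19}$, and one cannot simply invoke the reduction of $T_{14}$ from Lemma \ref{lemma161920} without risking circularity, since that reduction may itself return $T_{20}$. To handle it I would instead work on the arm, applying the involution $(Q,C,e)$ — whose forbidden alternative would place a degree-$5$ vertex at $Q$ — to rotate the length-two path, and combine it with a further triangle move so that the resulting partition either falls into one of the cases already settled or lands on $T_{17}$ or $T_{23}$, which reduce to $T_{19}$ directly by Remark \ref{remark1723} without passing through $T_{20}$. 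Verifying that every branch of this finite case analysis terminates at $T_{19}$, equivalently that each chosen involution has exactly the cycle-free alternative we want, is the crux of the argument.
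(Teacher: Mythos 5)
Your reduction is organized around a genuinely different pigeonhole than the paper's: you split according to which two of the four edges $(Q,a)$, $(Q,b)$, $(Q,c)$, $(Q,e)$ lie in the graph $\Gamma_3$ having degree $2$ at $Q$, whereas the paper distributes the four edges $(1,4)$, $(1,8)$, $(2,4)$, $(2,8)$ (in your notation $(e,P)$, $(e,c)$, $(C,P)$, $(C,c)$) among $\Gamma_1,\Gamma_2,\Gamma_3$. (Incidentally, your description of $T_{20}$ omits the leaf attached to $Q$, though this does not affect the count of the four edges at $Q$ outside $\Gamma_4$.) The cases you do settle are settled correctly: homogeneity forces the involution $(P,Q,\ell)$ to permute labels on the triangle, the two candidate recolorings of $\Gamma_4$ are $T_{19}$ and $T_{14}$, and your component criterion in $\Gamma_3-(Q,\ell)$ correctly decides which one is cycle-free. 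That part is clean and arguably more conceptual than the paper's figure-by-figure verification.

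The gap is the last case, exactly the one you flag, and your proposed fix does not work as described. The involution $(Q,C,e)$ has two admissible recolorings: one replaces $\Gamma_4$ by the twin-star $T_{22}$ (re-attaching $e$ at $Q$, which then has degree $4$, not $5$), the other rotates the arm and returns $T_{20}$; which occurs is decided by whether $C$ lies in $Q$'s or in $e$'s component of $\Gamma_3-(Q,e)$, so neither outcome is ``forbidden''. In the twin-star branch you land on $T_{22}$, not on the $T_{19}$ the lemma asserts (this branch could be repaired by one further involution, but you never consider it because you do not realize it exists). In the rotation branch you make no progress at all: the rotated partition satisfies the same hypotheses --- the $\Gamma_3$-path from $Q$ to $P$ still leaves $Q$ through $(Q,\ell)$, and the triangle involved is literally the same three vertices --- so applying the arm move again simply undoes the rotation. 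This stuck configuration really occurs: with the paper's labels take $\Gamma_4=T_{20}$, $\Gamma_3=\{(1,3),(3,5),(5,6),(4,6),(1,7),(2,7),(5,8)\}$, $\Gamma_1=\{(1,4),(1,5),(2,8),(3,7),(5,7),(6,7),(6,8)\}$, and $\Gamma_2$ the remaining seven edges; one checks all four graphs are spanning trees, and here $(3,4,5)$ produces $T_{14}$ while $(1,2,3)$ merely rotates the arm. Hence ``combine it with a further triangle move'' is precisely the missing argument, as you concede yourself. The paper sidesteps this difficulty by drawing all of its triangles from the set $\{1,2,4,8\}=\{e,C,P,c\}$ --- namely $(1,2,8)$, $(1,4,8)$, $(2,4,8)$, $(1,2,4)$ --- for which every rejected recoloring is excluded by a purely local certificate (a cycle inside $\Delta_4$, or a vertex of degree $5$), so its case analysis, unlike yours, never depends on where $P$ and $C$ sit inside the unknown tree $\Gamma_3$.
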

\begin{proof}
Up to weak equivalence we may assume that $\Gamma_4=T_{20}$ with the labeling from Figure \ref{T20}. Notice that none of the edges $(1,4)$, $(1,8)$, $(2,4)$ or $(2,8)$ are in $\Gamma_4$. Since our partition has only three other graphs $\Gamma_1$, $\Gamma_2$ and $\Gamma_3$, we know that at least two of these four edges must belong to the same graph. Up to weakly equivalence we may assume that they belong to $\Gamma_3$. There are six different cases.

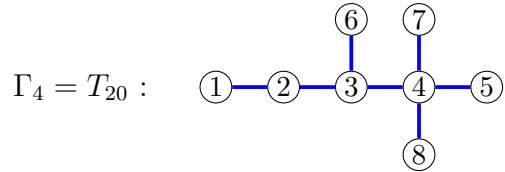
\begin{figure}[H]
\centering
\begin{tikzpicture}
  [scale=0.9,auto=left]
	
	\node[shape=circle,minimum size = 24pt,inner sep=0.3pt] (m4) at (-2,0) {{\large $\Gamma_4=T_{20}$ :}};
	\node[shape=circle,draw=black,minimum size = 12pt,inner sep=0.3pt] (n1) at (0,0) {$1$};
	\node[shape=circle,draw=black,minimum size = 12pt,inner sep=0.3pt] (n2) at (1,0) {$2$};
	\node[shape=circle,draw=black,minimum size = 12pt,inner sep=0.3pt] (n3) at (2,0) {$3$};
  \node[shape=circle,draw=black,minimum size = 12pt,inner sep=0.3pt] (n4) at (3,0) {$4$};
	\node[shape=circle,draw=black,minimum size = 12pt,inner sep=0.3pt] (n5) at (4,0) {$5$};
  \node[shape=circle,draw=black,minimum size = 12pt,inner sep=0.3pt] (n6) at (2,1) {$6$};
	\node[shape=circle,draw=black,minimum size = 12pt,inner sep=0.3pt] (n7) at (3,1) {$7$};
  \node[shape=circle,draw=black,minimum size = 12pt,inner sep=0.3pt] (n8) at (3,-1) {$8$};

\path[line width=0.5mm,blue] (n1) edge[bend left=0] node [above] {} (n2);
\path[line width=0.5mm,blue] (n2) edge[bend left=0] node [above] {} (n3);
\path[line width=0.5mm,blue] (n3) edge[bend left=0] node [above] {} (n4);
\path[line width=0.5mm,blue] (n4) edge[bend left=0] node [above] {} (n5);
\path[line width=0.5mm,blue] (n3) edge[bend left=0] node [above] {} (n6);
\path[line width=0.5mm,blue] (n4) edge[bend left=0] node [below] {} (n7);
\path[line width=0.5mm,blue] (n4) edge[bend left=0] node [above] {} (n8);

\end{tikzpicture}
\caption{The graph $\Gamma_4=T_{20}$ \label{T20}}
\end{figure}

Case I: Assume that  $(1,8)$ and $(2,8)\in E(\Gamma_3)$. From Figure \ref{T20T19} we can see that using the involution $(1,2,8)$  our partition can be reduced to one with $\Delta_4=T_{19}$ (the other possible partition in Figure \ref{T20T19} is not cycle-free). 

\begin{figure}[H]
\centering
\begin{tikzpicture}
  [scale=0.9,auto=left]
	
	\node[shape=circle,minimum size = 24pt,inner sep=0.3pt] (m4) at (-2,0) {{\large $\Gamma_4=T_{20}$ :}};
	\node[shape=circle,draw=black,minimum size = 12pt,inner sep=0.3pt] (n1) at (0,0) {$1$};
	\node[shape=circle,draw=black,minimum size = 12pt,inner sep=0.3pt] (n2) at (1,0) {$2$};
	\node[shape=circle,draw=black,minimum size = 12pt,inner sep=0.3pt] (n3) at (2,0) {$3$};
  \node[shape=circle,draw=black,minimum size = 12pt,inner sep=0.3pt] (n4) at (3,0) {$4$};
	\node[shape=circle,draw=black,minimum size = 12pt,inner sep=0.3pt] (n5) at (4,0) {$5$};
  \node[shape=circle,draw=black,minimum size = 12pt,inner sep=0.3pt] (n6) at (2,1) {$6$};
	\node[shape=circle,draw=black,minimum size = 12pt,inner sep=0.3pt] (n7) at (3,1) {$7$};
  \node[shape=circle,draw=black,minimum size = 12pt,inner sep=0.3pt] (n8) at (3,-1) {$8$};

\node[shape=circle,minimum size = 24pt,inner sep=0.3pt] (m4) at (-2,-1.8) {{\large $\downarrow$}};
\path[line width=0.5mm,blue] (n1) edge[bend left=0] node [above] {$\Gamma_4$} (n2);
\path[line width=0.5mm,blue] (n2) edge[bend left=0] node [above] {} (n3);
\path[line width=0.5mm,blue] (n3) edge[bend left=0] node [above] {} (n4);
\path[line width=0.5mm,blue] (n4) edge[bend left=0] node [above] {} (n5);
\path[line width=0.5mm,blue] (n3) edge[bend left=0] node [above] {} (n6);
\path[line width=0.5mm,blue] (n4) edge[bend left=0] node [below] {} (n7);
\path[line width=0.5mm,blue] (n4) edge[bend left=0] node [above] {} (n8);
\path[line width=0.5mm,red,dotted] (n1) edge[bend right=90] node [below] {$\Gamma_3$} (n8);
\path[line width=0.5mm,red,dotted] (n2) edge[bend right=90] node [above] {$\Gamma_3$} (n8);

	\node[shape=circle,minimum size = 24pt,inner sep=0.3pt] (m41) at (-2,-3.5) {{\large Not cycle-free :}};
	\node[shape=circle,draw=black,minimum size = 12pt,inner sep=0.3pt] (n11) at (0,-3.5) {$1$};
	\node[shape=circle,draw=black,minimum size = 12pt,inner sep=0.3pt] (n21) at (1,-3.5) {$2$};
	\node[shape=circle,draw=black,minimum size = 12pt,inner sep=0.3pt] (n31) at (2,-3.5) {$3$};
  \node[shape=circle,draw=black,minimum size = 12pt,inner sep=0.3pt] (n41) at (3,-3.5) {$4$};
	\node[shape=circle,draw=black,minimum size = 12pt,inner sep=0.3pt] (n51) at (4,-3.5) {$5$};
  \node[shape=circle,draw=black,minimum size = 12pt,inner sep=0.3pt] (n61) at (2,-2.5) {$6$};
	\node[shape=circle,draw=black,minimum size = 12pt,inner sep=0.3pt] (n71) at (3,-2.5) {$7$};
  \node[shape=circle,draw=black,minimum size = 12pt,inner sep=0.3pt] (n81) at (3,-4.5) {$8$};

\path[line width=0.5mm,red,dotted] (n11) edge[bend left=0] node [above] {$\Delta_3$} (n21);
\path[line width=0.5mm,blue] (n21) edge[bend left=0] node [above] {} (n31);
\path[line width=0.5mm,blue] (n31) edge[bend left=0] node [above] {} (n41);
\path[line width=0.5mm,blue] (n41) edge[bend left=0] node [above] {} (n51);
\path[line width=0.5mm,blue] (n31) edge[bend left=0] node [above] {} (n61);
\path[line width=0.5mm,blue] (n41) edge[bend left=0] node [below] {} (n71);
\path[line width=0.5mm,blue] (n41) edge[bend left=0] node [above] {} (n81);
\path[line width=0.5mm,red,dotted] (n11) edge[bend right=90] node [below] {$\Delta_3$} (n81);
\path[line width=0.5mm,blue] (n21) edge[bend right=90] node [above] {$\Delta_4$} (n81);

\node[shape=circle,minimum size = 24pt,inner sep=0.3pt] (m42) at (-2,-5.3) {{ or}};

	\node[shape=circle,minimum size = 24pt,inner sep=0.3pt] (m412) at (-2,-7) {{\large $\Delta_4=T_{19}$ :}};
	\node[shape=circle,draw=black,minimum size = 12pt,inner sep=0.3pt] (n112) at (0,-7) {$1$};
	\node[shape=circle,draw=black,minimum size = 12pt,inner sep=0.3pt] (n212) at (1,-7) {$2$};
	\node[shape=circle,draw=black,minimum size = 12pt,inner sep=0.3pt] (n312) at (2,-7) {$3$};
  \node[shape=circle,draw=black,minimum size = 12pt,inner sep=0.3pt] (n412) at (3,-7) {$4$};
	\node[shape=circle,draw=black,minimum size = 12pt,inner sep=0.3pt] (n512) at (4,-7) {$5$};
  \node[shape=circle,draw=black,minimum size = 12pt,inner sep=0.3pt] (n612) at (2,-6) {$6$};
	\node[shape=circle,draw=black,minimum size = 12pt,inner sep=0.3pt] (n712) at (3,-6) {$7$};
  \node[shape=circle,draw=black,minimum size = 12pt,inner sep=0.3pt] (n812) at (3,-8) {$8$};

\path[line width=0.5mm,red,dotted] (n112) edge[bend left=0] node [above] {$\Delta_3$} (n212);
\path[line width=0.5mm,blue] (n212) edge[bend left=0] node [above] {} (n312);
\path[line width=0.5mm,blue] (n312) edge[bend left=0] node [above] {} (n412);
\path[line width=0.5mm,blue] (n412) edge[bend left=0] node [above] {} (n512);
\path[line width=0.5mm,blue] (n312) edge[bend left=0] node [above] {} (n612);
\path[line width=0.5mm,blue] (n412) edge[bend left=0] node [below] {} (n712);
\path[line width=0.5mm,blue] (n412) edge[bend left=0] node [above] {} (n812);
\path[line width=0.5mm,blue] (n112) edge[bend right=90] node [below] {$\Delta_4$} (n812);
\path[line width=0.5mm,red,dotted] (n212) edge[bend right=90] node [above] {$\Delta_3$} (n812);

\end{tikzpicture}
\caption{Reduction of $T_{20}$ when $(1,8)$, $(2,8)\in E(\Gamma_3)$\label{T20T19}}
\end{figure}
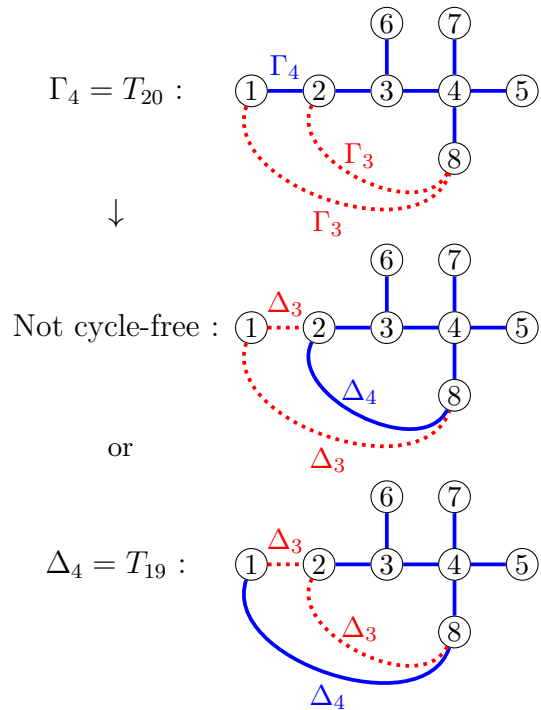

Case II: If $(1,4)$ and $(1,8)\in E(\Gamma_3)$ then using the involution $(1,4,8)$ our partition can be reduced to one with $\Delta_4=T_{17}$. And so by Remark  \ref{remark1723} we know that this partition can be reduced further to one with $\Delta_4=T_{19}$.

Case III: If $(2,4)$ and $(2,8)\in E(\Gamma_3)$ then using the involution $(2,4,8)$ our partition can be reduced to one with $\Delta_4=T_{23}$, which by Remark  \ref{remark1723}  can be reduced further to one with $\Delta_4=T_{19}$.

Case IV: If $(1,4)$ and $(2,4)\in E(\Gamma_3)$ then our partition is not cycle-free. Indeed, in this case there is only one other edge connected to vertex $4$ that does not appear in Figure \ref{T20TNCF} (namely  the edge $(4,6)$).  However, there are two more graphs $\Gamma_1$ and $\Gamma_2$ in our partition, so one of these two graphs will not be connected to vertex $4$. This implies that a partition with $\Gamma_3$ and $\Gamma_4$ depicted in  Figure \ref{T20TNCF} cannot be cycle free.

\begin{figure}[h!]
\centering
\begin{tikzpicture}
  [scale=0.9,auto=left]
	
	\node[shape=circle,minimum size = 24pt,inner sep=0.3pt] (m4) at (-2,0) {{\large $\Gamma_4=T_{20}$ :}};
	\node[shape=circle,draw=black,minimum size = 12pt,inner sep=0.3pt] (n1) at (0,0) {$1$};
	\node[shape=circle,draw=black,minimum size = 12pt,inner sep=0.3pt] (n2) at (1,0) {$2$};
	\node[shape=circle,draw=black,minimum size = 12pt,inner sep=0.3pt] (n3) at (2,0) {$3$};
  \node[shape=circle,draw=black,minimum size = 12pt,inner sep=0.3pt] (n4) at (3,0) {$4$};
	\node[shape=circle,draw=black,minimum size = 12pt,inner sep=0.3pt] (n5) at (4,0) {$5$};
  \node[shape=circle,draw=black,minimum size = 12pt,inner sep=0.3pt] (n6) at (2,1) {$6$};
	\node[shape=circle,draw=black,minimum size = 12pt,inner sep=0.3pt] (n7) at (3,1) {$7$};
  \node[shape=circle,draw=black,minimum size = 12pt,inner sep=0.3pt] (n8) at (3,-1) {$8$};

\path[line width=0.5mm,blue] (n1) edge[bend left=0] node [above] {$\Gamma_4$} (n2);
\path[line width=0.5mm,blue] (n2) edge[bend left=0] node [above] {} (n3);
\path[line width=0.5mm,blue] (n3) edge[bend left=0] node [above] {} (n4);
\path[line width=0.5mm,blue] (n4) edge[bend left=0] node [above] {} (n5);
\path[line width=0.5mm,blue] (n3) edge[bend left=0] node [above] {} (n6);
\path[line width=0.5mm,blue] (n4) edge[bend left=0] node [below] {} (n7);
\path[line width=0.5mm,blue] (n4) edge[bend left=0] node [above] {} (n8);
\path[line width=0.5mm,red,dotted] (n1) edge[bend right=90] node [below] {$\Gamma_3$} (n4);
\path[line width=0.5mm,red,dotted] (n2) edge[bend right=90] node [above] {$\Gamma_3$} (n4);

\end{tikzpicture}
\caption{Case of $\Gamma_4=T_{20}$ and $(1,4)$, $(2,4)\in E(\Gamma_3)$ is not cycle-free\label{T20TNCF}}
\end{figure}
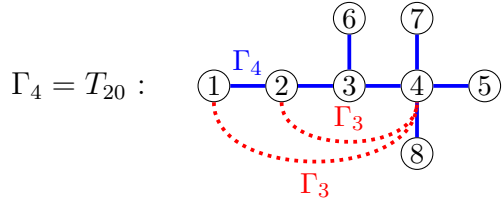

Case V: Assume that  $(2,4), (1,8)\in E(\Gamma_3)$ and $(1,4)\in E(\Gamma_2)$. Then we can see from Figure \ref{T20G3G2} that after using the involution $(1,2,4)$ we can switch the edges $(1,4)$ and $(2,4)$, which gives  $(1,4), (1,8)\in E(\Gamma_3)$ and $(2,4)\in E(\Gamma_2)$, a situation that was discussed  in Case II.

\begin{figure}[h]
\centering
\begin{tikzpicture}
  [scale=0.9,auto=left]
	
	\node[shape=circle,minimum size = 24pt,inner sep=0.3pt] (m4) at (-1,0) {{\large $T_{20}$ :}};
	\node[shape=circle,draw=black,minimum size = 12pt,inner sep=0.3pt] (n1) at (0,0) {$1$};
	\node[shape=circle,draw=black,minimum size = 12pt,inner sep=0.3pt] (n2) at (1,0) {$2$};
	\node[shape=circle,draw=black,minimum size = 12pt,inner sep=0.3pt] (n3) at (2,0) {$3$};
  \node[shape=circle,draw=black,minimum size = 12pt,inner sep=0.3pt] (n4) at (3,0) {$4$};
	\node[shape=circle,draw=black,minimum size = 12pt,inner sep=0.3pt] (n5) at (4,0) {$5$};
  \node[shape=circle,draw=black,minimum size = 12pt,inner sep=0.3pt] (n6) at (2,1) {$6$};
	\node[shape=circle,draw=black,minimum size = 12pt,inner sep=0.3pt] (n7) at (3,1) {$7$};
  \node[shape=circle,draw=black,minimum size = 12pt,inner sep=0.3pt] (n8) at (3,-1) {$8$};

\node[shape=circle,minimum size = 24pt,inner sep=0.3pt] (m4) at (-1,-1.8) {{\large $\downarrow$}};
\path[line width=0.5mm,blue] (n1) edge[bend left=0] node [above] {$\Gamma_4$} (n2);
\path[line width=0.5mm,blue] (n2) edge[bend left=0] node [above] {} (n3);
\path[line width=0.5mm,blue] (n3) edge[bend left=0] node [above] {} (n4);
\path[line width=0.5mm,blue] (n4) edge[bend left=0] node [above] {} (n5);
\path[line width=0.5mm,blue] (n3) edge[bend left=0] node [above] {} (n6);
\path[line width=0.5mm,blue] (n4) edge[bend left=0] node [below] {} (n7);
\path[line width=0.5mm,blue] (n4) edge[bend left=0] node [above] {} (n8);
\path[line width=0.5mm,green,dash dot] (n1) edge[bend right=90] node [below] {$\Gamma_2$} (n4);
\path[line width=0.5mm,red,dotted] (n2) edge[bend right=90] node [above] {$\Gamma_3$} (n4);
\path[line width=0.5mm,red,dotted] (n1) edge[bend right=90] node [below] {$\Gamma_3$} (n8);

	\node[shape=circle,minimum size = 24pt,inner sep=0.3pt] (m41) at (-2,-3.5) {{\large Not cycle-free :}};
	\node[shape=circle,draw=black,minimum size = 12pt,inner sep=0.3pt] (n11) at (0,-3.5) {$1$};
	\node[shape=circle,draw=black,minimum size = 12pt,inner sep=0.3pt] (n21) at (1,-3.5) {$2$};
	\node[shape=circle,draw=black,minimum size = 12pt,inner sep=0.3pt] (n31) at (2,-3.5) {$3$};
  \node[shape=circle,draw=black,minimum size = 12pt,inner sep=0.3pt] (n41) at (3,-3.5) {$4$};
	\node[shape=circle,draw=black,minimum size = 12pt,inner sep=0.3pt] (n51) at (4,-3.5) {$5$};
  \node[shape=circle,draw=black,minimum size = 12pt,inner sep=0.3pt] (n61) at (2,-2.5) {$6$};
	\node[shape=circle,draw=black,minimum size = 12pt,inner sep=0.3pt] (n71) at (3,-2.5) {$7$};
  \node[shape=circle,draw=black,minimum size = 12pt,inner sep=0.3pt] (n81) at (3,-4.5) {$8$};

\path[line width=0.5mm,red,dotted] (n11) edge[bend left=0] node [above] {$\Delta_3$} (n21);
\path[line width=0.5mm,blue] (n21) edge[bend left=0] node [above] {} (n31);
\path[line width=0.5mm,blue] (n31) edge[bend left=0] node [above] {} (n41);
\path[line width=0.5mm,blue] (n41) edge[bend left=0] node [above] {} (n51);
\path[line width=0.5mm,blue] (n31) edge[bend left=0] node [above] {} (n61);
\path[line width=0.5mm,blue] (n41) edge[bend left=0] node [below] {} (n71);
\path[line width=0.5mm,blue] (n41) edge[bend left=0] node [above] {} (n81);
\path[line width=0.5mm,green,dash dot] (n11) edge[bend right=90] node [below] {$\Delta_2$} (n41);
\path[line width=0.5mm,blue] (n21) edge[bend right=90] node [above] {$\Delta_4$} (n41);
\path[line width=0.5mm,red,dotted] (n11) edge[bend right=90] node [below] {$\Delta_3$} (n81);
	
\node[shape=circle,minimum size = 24pt,inner sep=0.3pt] (m42) at (-1,-5.3) {{ or}};

	\node[shape=circle,minimum size = 24pt,inner sep=0.3pt] (m41) at (-2,-7) {{\large Not cycle-free :}};
	\node[shape=circle,draw=black,minimum size = 12pt,inner sep=0.3pt] (n112) at (0,-7) {$1$};
	\node[shape=circle,draw=black,minimum size = 12pt,inner sep=0.3pt] (n212) at (1,-7) {$2$};
	\node[shape=circle,draw=black,minimum size = 12pt,inner sep=0.3pt] (n312) at (2,-7) {$3$};
  \node[shape=circle,draw=black,minimum size = 12pt,inner sep=0.3pt] (n412) at (3,-7) {$4$};
	\node[shape=circle,draw=black,minimum size = 12pt,inner sep=0.3pt] (n512) at (4,-7) {$5$};
  \node[shape=circle,draw=black,minimum size = 12pt,inner sep=0.3pt] (n612) at (2,-6) {$6$};
	\node[shape=circle,draw=black,minimum size = 12pt,inner sep=0.3pt] (n712) at (3,-6) {$7$};
  \node[shape=circle,draw=black,minimum size = 12pt,inner sep=0.3pt] (n812) at (3,-8) {$8$};

\path[line width=0.5mm,green,dash dot] (n112) edge[bend left=0] node [above] {$\Delta_2$} (n212);
\path[line width=0.5mm,blue] (n212) edge[bend left=0] node [above] {} (n312);
\path[line width=0.5mm,blue] (n312) edge[bend left=0] node [above] {} (n412);
\path[line width=0.5mm,blue] (n412) edge[bend left=0] node [above] {} (n512);
\path[line width=0.5mm,blue] (n312) edge[bend left=0] node [above] {} (n612);
\path[line width=0.5mm,blue] (n412) edge[bend left=0] node [below] {} (n712);
\path[line width=0.5mm,blue] (n412) edge[bend left=0] node [above] {} (n812);
\path[line width=0.5mm,red,dotted] (n112) edge[bend right=90] node [below] {$\Delta_3$} (n412);
\path[line width=0.5mm,blue] (n212) edge[bend right=90] node [above] {$\Delta_4$} (n412);
\path[line width=0.5mm,red,dotted] (n112) edge[bend right=90] node [below] {$\Delta_3$} (n812);
	
\node[shape=circle,minimum size = 24pt,inner sep=0.3pt] (m42) at (-1,-8.3) {{ or}};

	\node[shape=circle,minimum size = 24pt,inner sep=0.3pt] (m413) at (-3.3,-10.5) {{\large  In $\Delta_4$ vertex $4$ has degree $5$ :}};
	\node[shape=circle,draw=black,minimum size = 12pt,inner sep=0.3pt] (n113) at (0,-10.5) {$1$};
	\node[shape=circle,draw=black,minimum size = 12pt,inner sep=0.3pt] (n213) at (1,-10.5) {$2$};
	\node[shape=circle,draw=black,minimum size = 12pt,inner sep=0.3pt] (n313) at (2,-10.5) {$3$};
  \node[shape=circle,draw=black,minimum size = 12pt,inner sep=0.3pt] (n413) at (3,-10.5) {$4$};
	\node[shape=circle,draw=black,minimum size = 12pt,inner sep=0.3pt] (n513) at (4,-10.5) {$5$};
  \node[shape=circle,draw=black,minimum size = 12pt,inner sep=0.3pt] (n613) at (2,-9.5) {$6$};
	\node[shape=circle,draw=black,minimum size = 12pt,inner sep=0.3pt] (n713) at (3,-9.5) {$7$};
  \node[shape=circle,draw=black,minimum size = 12pt,inner sep=0.3pt] (n813) at (3,-11.5) {$8$};

\path[line width=0.5mm,red,dotted] (n113) edge[bend left=0] node [above] {$\Delta_3$} (n213);
\path[line width=0.5mm,blue] (n213) edge[bend left=0] node [above] {} (n313);
\path[line width=0.5mm,blue] (n313) edge[bend left=0] node [above] {} (n413);
\path[line width=0.5mm,blue] (n413) edge[bend left=0] node [above] {} (n513);
\path[line width=0.5mm,blue] (n313) edge[bend left=0] node [above] {} (n613);
\path[line width=0.5mm,blue] (n413) edge[bend left=0] node [below] {} (n713);
\path[line width=0.5mm,blue] (n413) edge[bend left=0] node [above] {} (n813);
\path[line width=0.5mm,blue] (n113) edge[bend right=90] node [below] {$\Delta_4$} (n413);
\path[line width=0.5mm,green,dash dot] (n213) edge[bend right=90] node [above] {$\Delta_2$} (n413);
\path[line width=0.5mm,red,dotted] (n113) edge[bend right=90] node [below] {$\Delta_3$} (n813);
	
\node[shape=circle,minimum size = 24pt,inner sep=0.3pt] (m424) at (-1,-5.3) {{ or}};

	\node[shape=circle,minimum size = 24pt,inner sep=0.3pt] (m4134) at (-3.3,-14) {{\large  In $\Delta_4$ vertex $4$ has degree $5$ :}};
	\node[shape=circle,draw=black,minimum size = 12pt,inner sep=0.3pt] (n1134) at (0,-14) {$1$};
	\node[shape=circle,draw=black,minimum size = 12pt,inner sep=0.3pt] (n2134) at (1,-14) {$2$};
	\node[shape=circle,draw=black,minimum size = 12pt,inner sep=0.3pt] (n3134) at (2,-14) {$3$};
  \node[shape=circle,draw=black,minimum size = 12pt,inner sep=0.3pt] (n4134) at (3,-14) {$4$};
	\node[shape=circle,draw=black,minimum size = 12pt,inner sep=0.3pt] (n5134) at (4,-14) {$5$};
  \node[shape=circle,draw=black,minimum size = 12pt,inner sep=0.3pt] (n6134) at (2,-13) {$6$};
	\node[shape=circle,draw=black,minimum size = 12pt,inner sep=0.3pt] (n7134) at (3,-13) {$7$};
  \node[shape=circle,draw=black,minimum size = 12pt,inner sep=0.3pt] (n8134) at (3,-15) {$8$};

\path[line width=0.5mm,green,dash dot] (n1134) edge[bend left=0] node [above] {$\Delta_2$} (n2134);
\path[line width=0.5mm,blue] (n2134) edge[bend left=0] node [above] {} (n3134);
\path[line width=0.5mm,blue] (n3134) edge[bend left=0] node [above] {} (n4134);
\path[line width=0.5mm,blue] (n4134) edge[bend left=0] node [above] {} (n5134);
\path[line width=0.5mm,blue] (n3134) edge[bend left=0] node [above] {} (n6134);
\path[line width=0.5mm,blue] (n4134) edge[bend left=0] node [below] {} (n7134);
\path[line width=0.5mm,blue] (n4134) edge[bend left=0] node [above] {} (n8134);
\path[line width=0.5mm,blue] (n1134) edge[bend right=90] node [below] {$\Delta_4$} (n4134);
\path[line width=0.5mm,red,dotted] (n2134) edge[bend right=90] node [above] {$\Delta_3$} (n4134);
\path[line width=0.5mm,red,dotted] (n1134) edge[bend right=90] node [below] {$\Delta_3$} (n8134);
	
\node[shape=circle,minimum size = 24pt,inner sep=0.3pt] (m42) at (-1,-5.3) {{ or}};

	\node[shape=circle,minimum size = 24pt,inner sep=0.3pt] (m46) at (-3,-17.5) {{\large Switch $(1,4)$ with $(2,4)$ :}};
	\node[shape=circle,draw=black,minimum size = 12pt,inner sep=0.3pt] (n16) at (0,-17.5) {$1$};
	\node[shape=circle,draw=black,minimum size = 12pt,inner sep=0.3pt] (n26) at (1,-17.5) {$2$};
	\node[shape=circle,draw=black,minimum size = 12pt,inner sep=0.3pt] (n36) at (2,-17.5) {$3$};
  \node[shape=circle,draw=black,minimum size = 12pt,inner sep=0.3pt] (n46) at (3,-17.5) {$4$};
	\node[shape=circle,draw=black,minimum size = 12pt,inner sep=0.3pt] (n56) at (4,-17.5) {$5$};
  \node[shape=circle,draw=black,minimum size = 12pt,inner sep=0.3pt] (n66) at (2,-16.5) {$6$};
	\node[shape=circle,draw=black,minimum size = 12pt,inner sep=0.3pt] (n76) at (3,-16.5) {$7$};
  \node[shape=circle,draw=black,minimum size = 12pt,inner sep=0.3pt] (n86) at (3,-18.5) {$8$};

\path[line width=0.5mm,blue] (n16) edge[bend left=0] node [above] {$\Delta_4$} (n26);
\path[line width=0.5mm,blue] (n26) edge[bend left=0] node [above] {} (n36);
\path[line width=0.5mm,blue] (n36) edge[bend left=0] node [above] {} (n46);
\path[line width=0.5mm,blue] (n46) edge[bend left=0] node [above] {} (n56);
\path[line width=0.5mm,blue] (n36) edge[bend left=0] node [above] {} (n66);
\path[line width=0.5mm,blue] (n46) edge[bend left=0] node [below] {} (n76);
\path[line width=0.5mm,blue] (n46) edge[bend left=0] node [above] {} (n86);
\path[line width=0.5mm,red,dotted] (n16) edge[bend right=90] node [below] {$\Delta_3$} (n46);
\path[line width=0.5mm,green,dash dot] (n26) edge[bend right=90] node [above] {$\Delta_2$} (n46);
\path[line width=0.5mm,red,dotted] (n16) edge[bend right=90] node [below] {$\Delta_3$} (n86);

\end{tikzpicture}
\caption{Reduction of $T_{20}$ when $(1,4)\in E(\Gamma_2)$, $(1,8), (2,4)\in E(\Gamma_3)$\label{T20G3G2}}
\end{figure}
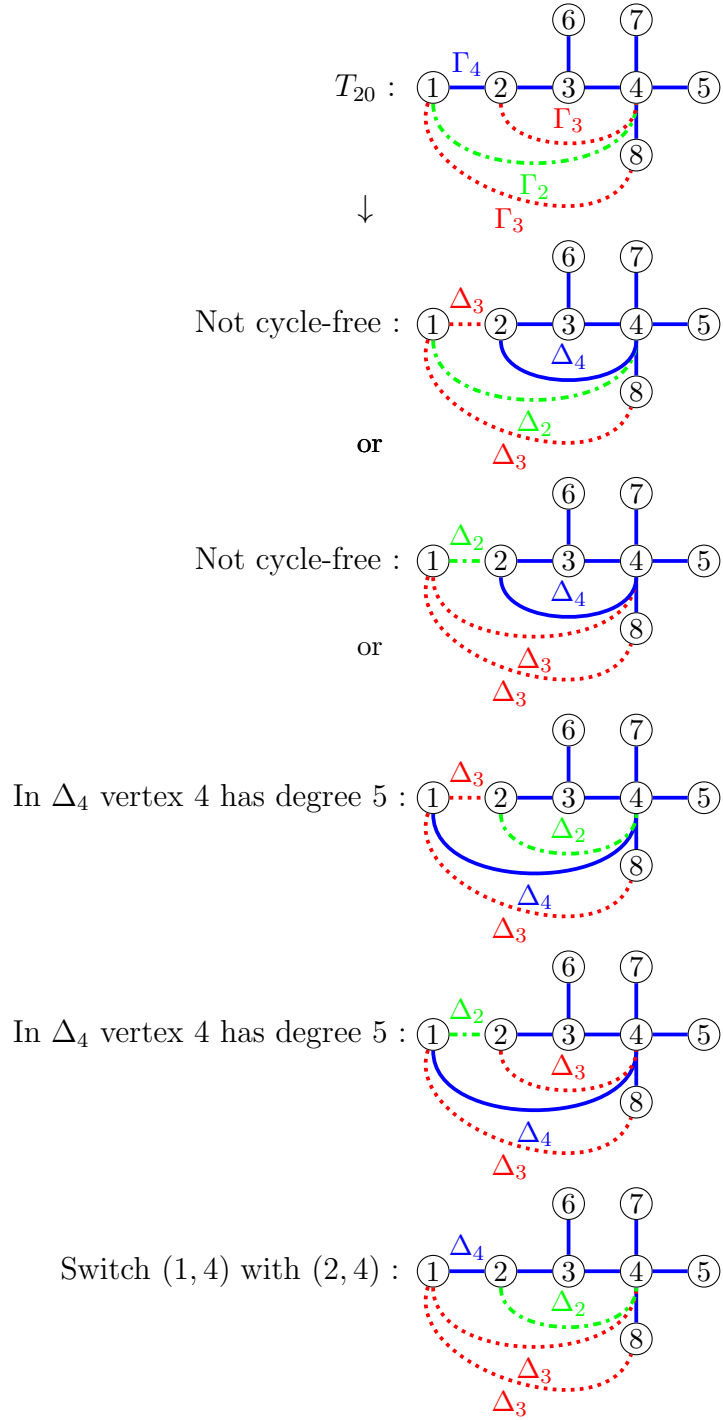

Case VI: Assume that  $(1,4), (2,8)\in E(\Gamma_3)$ and $(1,8)\in E(\Gamma_2)$. A similar argument shows that after using the involution $(1,2,8)$ we can reduce our partition to one with $\Delta_4=T_{17}$ (which by Remark  \ref{remark1723}  can be reduced further to one with $\Delta_4=T_{19}$), or we can switch the edges $(1,8)$ and $(2,8)$. This gives $(1,4), (1,8)\in E(\Delta_3)$ and $(2,8)\in E(\Delta_2)$,  a situation that was covered in Case II, which concludes our proof.

\end{proof}

The next result deals with the case $\Gamma_4=T_{19}$.
\begin{lemma}
Let $(\Gamma_1,\Gamma_2,\Gamma_3,\Gamma_4)$ be a cycle-free $4$-partition of the complete graph $K_8$  such that $\Gamma_4$ is the graph  $T_{19}$. Then $(\Gamma_1,\Gamma_2,\Gamma_3,\Gamma_4)$ is weakly  equivalent to a cycle-free $4$-partition $(\Delta_1,\Delta_2,\Delta_3,\Delta_4)$ such that $\Delta_4$ is the twin-star graph $TS_4=T_{22}$.
\end{lemma}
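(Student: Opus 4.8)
The plan is to establish this final case computationally, since (unlike the tree types treated in the preceding lemmas) the configurations with $\Gamma_4=T_{19}$ are too numerous for a hand case-analysis in the style of Lemma \ref{lemma161920}; this is precisely where MATLAB enters. First I would fix, up to weak equivalence, a standard labeling of $T_{19}$ and take $\Gamma_4$ to be that fixed graph. The seven edges of $T_{19}$ are then removed from $E(K_8)$, leaving $21=28-7$ edges that must be distributed among $\Gamma_1,\Gamma_2,\Gamma_3$. Because any cycle-free $4$-partition of $K_8$ is homogeneous, each of $\Gamma_1,\Gamma_2,\Gamma_3$ must be a spanning tree (seven edges on eight vertices with no cycle). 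So the first step is to enumerate all ordered triples $(\Gamma_1,\Gamma_2,\Gamma_3)$ of edge-disjoint spanning trees whose edge sets partition $E(K_8)\setminus E(T_{19})$; this is a finite, if sizable, list, and each entry together with $\Gamma_4=T_{19}$ is a cycle-free $4$-partition.

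Next I would implement the involution $(x,y,z)$ of Lemma \ref{keylemma} as a subroutine: given a partition and a triple $1\le x<y<z\le 8$, examine the three edges $(x,y),(x,z),(y,z)$, generate the reassignments that differ on at least two of them, and select the unique one that remains cycle-free, existence and uniqueness being guaranteed by Lemma \ref{keylemma}. Encoding each partition as an integer vector indexed by the $28$ edges (entry $t$ meaning the edge lies in $\Gamma_t$) makes both the involutions and the $S_8\times S_4$ action easy to apply and makes partitions hashable. I would then run a breadth-first search in the weak-equivalence graph: starting from each enumerated $T_{19}$-partition, repeatedly apply all $\binom{8}{3}=56$ involutions together with representatives of the $S_8\times S_4$ symmetry, identifying partitions that agree up to this symmetry, and halt a branch as soon as a partition is reached in which one of the four components is isomorphic to $TS_4=T_{22}$. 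Verifying that every starting partition reaches such a target establishes the lemma.

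The main obstacle will be the size of the search space, both the number of starting partitions and the diameter of the orbits under the involutions. To keep this tractable I would exploit symmetry aggressively: reduce the enumeration of starting partitions modulo $\operatorname{Aut}(T_{19})$ (the vertex relabelings fixing $T_{19}$) together with the $S_3$ permuting $\Gamma_1,\Gamma_2,\Gamma_3$, and canonicalize every partition encountered during the search to a symmetry representative before inserting it into the visited set. A practical shortcut is to search not for the full weak-equivalence class but only until some component becomes $TS_4$; since $TS_4$ has the minimum possible diameter among the admissible trees, the diameter-increasing involutions used in Theorem \ref{theI2d} suggest a natural heuristic ordering of moves that tends to reach a twin star quickly. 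The remaining concern is correctness of the implementation rather than mathematics: one must ensure the involution routine always returns the unique cycle-free reconfiguration, which can be cross-checked against the explicit small case in Figure \ref{figkeylemma} and against the by-hand reductions of Lemma \ref{lemma161920}. Once the search terminates successfully on every starting configuration, we conclude that every cycle-free $4$-partition with $\Gamma_4=T_{19}$ is weakly equivalent to one with a $TS_4$ component, which completes the verification of the twin-star hypothesis $\mathcal{TS}(4)$.
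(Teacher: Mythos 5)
Your proposal matches the paper's approach: the paper proves this lemma purely computationally, via a MATLAB search over all cycle-free $4$-partitions with $\Gamma_4=T_{19}$ (code posted at \cite{flss}), which is exactly the enumeration-plus-involution-search you describe. Your outline of the algorithm (enumerating spanning-tree triples, implementing the involution of Lemma \ref{keylemma}, canonicalizing modulo the $S_8\times S_4$ action, and halting once a component isomorphic to $TS_4$ appears) is a sound and faithful description of what such a verification must do.
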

\begin{proof} This result was proved using MATLAB. The code and a README file is posted at \cite{flss}.
\end{proof}

\begin{remark} It is somehow unsatisfying that we had to use MATLAB in order to settle the case $d=4$. However, it is worth noticing that even in the particular case $\Gamma_4=T_{19}$, our computational resources were stretched to the maximum (the program that we have runs for almost $3$ days on a regular desktop). So, the combinatorial reduction to the case $\Gamma_4=T_{19}$ was a necessary step for our approch to work. It would still be interesting to find a direct proof without the help of computers.
\end{remark}

\begin{proposition} The twin-star hypothesis $\mathcal{TS}(4)$ holds true. \label{propsum}
\end{proposition}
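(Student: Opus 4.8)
The plan is to assemble the proposition directly from the chain of reduction results established earlier in this section, relying on the fact that weak equivalence is an equivalence relation (in particular, transitive). First I would take an arbitrary cycle-free $4$-partition $(\Gamma_1,\Gamma_2,\Gamma_3,\Gamma_4)$ of $K_8$ and apply Theorem \ref{theI2d} to replace it, up to involution equivalence, by a cycle-free $4$-partition whose last component is the path graph $I_8=T_1$. Since involution equivalence implies weak equivalence by part $1)$ of Lemma \ref{lemma3eiw}, this first move costs nothing and places us in the hypothesis of Lemma \ref{lemma161920}.

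Next I would invoke Lemma \ref{lemma161920} to reduce the case $\Gamma_4=I_8$ to one of the three cases in which the last component is $T_{16}$, $T_{19}$, or $T_{20}$. The two lemmas immediately following Lemma \ref{lemma161920} then dispose of the two outer cases: the lemma treating $\Gamma_4=T_{16}$ and the lemma treating $\Gamma_4=T_{20}$ each conclude with weak equivalence to a partition whose last component is $T_{19}$. Composing these reductions, every cycle-free $4$-partition of $K_8$ is therefore weakly equivalent to one whose last component is exactly $T_{19}$, so the entire problem has been funneled into this single isomorphism type.

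Finally I would apply the last lemma of this section, which settles the case $\Gamma_4=T_{19}$ and yields weak equivalence to a cycle-free $4$-partition whose last component is the twin-star graph $TS_4$. Chaining all of the above reductions and using transitivity of weak equivalence, I conclude that every cycle-free $4$-partition of $K_8$ is weakly equivalent to one containing a copy of $TS_4$, which is precisely the assertion $\mathcal{TS}(4)$. The main obstacle is not in this assembly, which is routine bookkeeping, but in the $T_{19}$ step itself: that reduction was carried out by the MATLAB verification and is the genuinely hard, resource-intensive part of the argument. Indeed, the entire purpose of the combinatorial reductions feeding into it is to collapse all $23$ possible isomorphism types of $\Gamma_4$ down to the single tractable case $\Gamma_4=T_{19}$, so that the computation has to be run only once.
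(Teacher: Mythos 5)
Your proposal is correct and follows essentially the same route as the paper: the paper's proof of Proposition \ref{propsum} simply cites ``the discussions in this section,'' and your argument spells out exactly that chain — Theorem \ref{theI2d} to reach $I_8$, Lemma \ref{lemma161920} to reach $T_{16}$, $T_{19}$, or $T_{20}$, the two subsequent lemmas to funnel everything into $T_{19}$, and the MATLAB-verified lemma to reach $TS_4$, glued together by transitivity of weak equivalence and Lemma \ref{lemma3eiw}. Your assembly is a faithful (and more explicit) rendering of the paper's intended proof.
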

\begin{proof}
This follows directly from the discussions in this section.
\end{proof}

\begin{corollary} Conjecture \ref{mainconj} is true for $d=4$. \label{corlsum1}
\end{corollary}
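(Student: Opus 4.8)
The plan is to deduce this corollary directly from Theorem \ref{mainth}, which states that Conjecture \ref{mainconj} holds for a given $d$ as soon as the twin-star hypothesis $\mathcal{TS}(i)$ is verified for every $1 \leq i \leq d$. Thus for $d=4$ it suffices to confirm the four instances $\mathcal{TS}(1)$, $\mathcal{TS}(2)$, $\mathcal{TS}(3)$, and $\mathcal{TS}(4)$, and then apply the theorem once.

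First I would dispose of the low-dimensional cases. For $i=1$ the graph $K_2$ has a single edge, the unique cycle-free $1$-partition is that edge, and it coincides with $TS_1$, so $\mathcal{TS}(1)$ holds trivially. For $i=2$ and $i=3$, Conjecture \ref{mainconj} is already known (cf.\ \cite{sta2} and \cite{edge}), and I would use the general observation, noted just after the definition of the twin-star hypothesis, that whenever Conjecture \ref{mainconj} holds for a value of $d$ the hypothesis $\mathcal{TS}(d)$ holds for that same value: indeed every cycle-free $d$-partition is then involution equivalent, hence weakly equivalent, to $E_d$, whose components are all isomorphic to $TS_d$ by Lemma \ref{omegad}. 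This yields $\mathcal{TS}(2)$ and $\mathcal{TS}(3)$ at once.

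The only genuinely new ingredient is $\mathcal{TS}(4)$, which is precisely Proposition \ref{propsum}. Once this is in hand, all four hypotheses $\mathcal{TS}(1),\dots,\mathcal{TS}(4)$ are established, so Theorem \ref{mainth} applies with $d=4$ and delivers Conjecture \ref{mainconj} for $d=4$, which is the assertion of the corollary.

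The real obstacle is entirely concentrated in Proposition \ref{propsum}, whose justification is the substance of this section: the combinatorial reduction (via Theorem \ref{theI2d} and the chain of lemmas reducing an arbitrary fourth component to $T_{19}$, $T_{16}$, or $T_{20}$, and then these in turn to $T_{19}$) together with the MATLAB verification that a $4$-partition with fourth component $T_{19}$ is weakly equivalent to one whose fourth component is $TS_4=T_{22}$. Given Proposition \ref{propsum}, the passage to the corollary is purely formal, so the proof is simply the assembly of the pieces described above.
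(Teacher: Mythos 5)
Your proposal is correct and follows exactly the paper's own route: the corollary is obtained by combining Theorem \ref{mainth} with Proposition \ref{propsum}, the cases $\mathcal{TS}(1)$, $\mathcal{TS}(2)$, $\mathcal{TS}(3)$ being already settled since Conjecture \ref{mainconj} is known for $d\leq 3$. Your extra care in spelling out the trivial case $\mathcal{TS}(1)$ and why the conjecture for a given $d$ implies $\mathcal{TS}(d)$ matches remarks already present in the paper, so there is nothing genuinely different here.
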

\begin{proof} This statement follows from Theorem \ref{mainth} and Proposition \ref{propsum}.
\end{proof}
Using the results and definitions  from \cite{edge,sta2} we have.
\begin{corollary} If $dim_k(V_4)=4$ then  $dim_k(\Lambda_{V_4}^{S^2}[8])=1$.
\end{corollary}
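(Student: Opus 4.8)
The plan is to read the statement off directly from the equivalence, established in \cite{edge} and \cite{sta2}, between the uniqueness (up to a scalar) of the map $det^{S^2}$ and the transitivity of the $G_4$-action on $\mathcal{P}_4^{cf}(K_8)$. The space $\Lambda_{V_4}^{S^2}[8]$ is identified in \cite{sta2} with the space of nontrivial linear maps $det^{S^2}\colon V_4^{\otimes 28}\to k$ satisfying the vanishing condition recalled in the introduction (here $d(2d-1)=28$ for $d=4$), so computing its dimension amounts to establishing both the existence and the uniqueness up to a scalar of such a map.

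First I would record the lower bound $\dim_k(\Lambda_{V_4}^{S^2}[8])\geq 1$. The existence of a nontrivial map with the required property is exactly the result of \cite{dets2} quoted above, so the space is at least one-dimensional and the issue is entirely one of uniqueness.

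For the upper bound I would invoke the combinatorial description. By the cited results, any element of $\Lambda_{V_4}^{S^2}[8]$ is determined by its values on the basis tensors $f_{(\Gamma_1,\dots,\Gamma_4)}$ indexed by cycle-free $4$-partitions; moreover, by the expansion \eqref{for1} together with the relation $\varepsilon^{S^2}_4((\Gamma_1,\dots,\Gamma_4)^{(x,y,z)})=-\varepsilon^{S^2}_4((\Gamma_1,\dots,\Gamma_4))$, the coefficients attached to two cycle-free partitions lying in the same $G_4$-orbit agree up to sign. Hence, if $G_4$ acts transitively on $\mathcal{P}_4^{cf}(K_8)$, the whole map is pinned down by a single coefficient, yielding $\dim_k(\Lambda_{V_4}^{S^2}[8])\leq 1$.

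Finally I would supply the transitivity input. By Remark \ref{remGd}, transitivity of $G_4$ on $\mathcal{P}_4^{cf}(K_8)$ is precisely Conjecture \ref{mainconj} for $d=4$, which is Corollary \ref{corlsum1} (itself a consequence of Theorem \ref{mainth} and Proposition \ref{propsum}). Combining the two bounds gives $\dim_k(\Lambda_{V_4}^{S^2}[8])=1$. The only genuine obstacle in this chain is the transitivity statement, and that has already been discharged in Corollary \ref{corlsum1}; the remaining steps are the bookkeeping translation between the linear-algebra and combinatorial formulations carried out in \cite{edge}, so at this stage the corollary follows formally.
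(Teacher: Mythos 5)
Your proposal is correct and follows essentially the same route as the paper: the paper's proof simply cites Corollary \ref{corlsum1} together with the equivalence from \cite{edge} (and the existence result of \cite{dets2}), which is exactly the chain you spell out in more detail. The extra bookkeeping you provide (lower bound from existence, upper bound from transitivity pinning down the coefficients via $\varepsilon^{S^2}_4$) is a faithful unpacking of what the paper leaves to the cited references.
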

\begin{proof}  This is a consequence of Corollary \ref{corlsum1} and the discussions from \cite{edge}.
\end{proof}

\appendix

\maketitle

\section{Proof of Lemma \ref{lemma3}}

\label{appendix1}

In this section we give a proof for Lemma \ref{lemma3}. The plan is to show that for each $1\leq i\leq 2d-1$ we have that $(i,i+1)\circ E_d$ is involution equivalent to $E_d$, and for each $1\leq i\leq d-1$ we have that $(i,i+1)\odot E_d$ is involution equivalent to $E_d$. Using the fact that $S_{n}$ is generated by the set $\{(i,i+1)|1\leq i\leq n-1\}$, this proves or statement. First we need to elaborate on Remark \ref{remlocal}.

\begin{definition} Let $\{x_1,x_2,\dots,x_{2t}\}\subset \{1,2,\dots, 2d\}$. We denote by $K_{2t}(x_1,x_2,\dots,x_{2t})$ the complete subgraph of $K_{2d}$ determined by the vertices $x_1,x_2,\dots, x_{2t}$.
\end{definition}

\begin{remark} As a graph $K_{2t}(x_1,x_2,\dots,x_{2t})$ is isomorphic to $K_{2t}$. In general a $d$-partition $(\Gamma_1,\dots, \Gamma_d)$ of $K_{2d}$ will induce a $d$-partition of $K_{2t}$. However, if the edges of $K_{2t}(x_1,x_2,\dots,x_{2t})$ belong only to the graphs $\Gamma_{i_1}, \dots,\Gamma_{i_t}$ for some $1\leq i_1<i_2<\dots <i_t\leq 2d$, then the restriction of $(\Gamma_1,\dots, \Gamma_d)$ will induce a $t$-partition  of $K_{2t}(x_1,x_2,\dots,x_{2t})$. Moreover, if the $d$-partition $(\Gamma_1,\dots, \Gamma_d)$ is cycle-free then the induced $t$-partition of $K_{2t}$ will also be cycle-free.

For example, if we restrict the $d$-partition $E_d$ of the complete graph $K_{2d}$ to the subgraph  $K_{2d-2}(1,2,\dots,2d-3,2d-2)$ then the resulting $(d-1)$-partition is $E_{d-1}$.
\end{remark}

\begin{remark} In the proof of the next two lemmas we will use the fact that if $(\Gamma_1,\Gamma_2)$ is a cycle-free $2$-partition for the graph $K_4$ then $(\Gamma_1,\Gamma_2)$ is involution equivalent to $E_2$. Moreover, if $(\Gamma_1,\Gamma_2,\Gamma_3)$ is a cycle-free $3$-partition for the graph $K_6$ then $(\Gamma_1,\Gamma_2,\Gamma_3)$ is involution equivalent to $E_3$. Obviously these two statements are still true for the complete graph $K_4(x_1,x_2,x_3,x_4)$ and $K_6(x_1,x_2,x_3,x_4,x_5,x_6)$ respectively.
\end{remark}

\begin{remark}\label{reKsubG}

Let $\{x_1,x_2,\dots,x_{2t}\}\subset \{1,2,\dots, 2d\}$. Consider $\mathcal{P}=(\Gamma_1,\dots,\Gamma_d)$ and $\mathcal{Q}=(\Theta_1,\dots,\Theta_d)$ two cycle-free $d$-partitions of $K_{2d}$ such that the $\mathcal{P}$ and $\mathcal{Q}$ coincide outside of the complete graph $K_{2t}(x_1,x_2,\dots,x_{2t})$. Moreover, suppose that the restrictions of $\mathcal{P}$ and $\mathcal{Q}$  to the complete graph $K_{2t}(x_1,x_2,\dots,x_{2t})$ are two cycle-free $t$-partitions $\overline{\mathcal{P}}$ and $\overline{\mathcal{Q}}$ of the complete graph $K_{2t}$. If $\overline{\mathcal{P}}$ and $\overline{\mathcal{Q}}$ are involution equivalent then $\mathcal{P}$ and $\mathcal{Q}$ are involution equivalent.
\end{remark}

\begin{lemma} Let $1\leq a\leq d$ then $(2a-1,2a)\circ E_d$ is involution equivalent to $E_d$. \label{lemmaS2d1}
\end{lemma}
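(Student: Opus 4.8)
The plan is to compute $\sigma\circ E_d$ explicitly for $\sigma=(2a-1,2a)$ and then to peel off its difference with $E_d$ one pair of vertices at a time, invoking the known transitivity on $K_4$ through Remark \ref{reKsubG}. For $d=1$ the transposition fixes the unique edge of $E_1$, so $\sigma\circ E_1=E_1$ and there is nothing to prove; so I would assume $d\geq 2$.

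First I would record how $\sigma$ acts on colors. Since $\sigma$ fixes every vertex other than $2a-1,2a$ and interchanges these two, the color of an edge $e$ in $\sigma\circ E_d$ equals the color of $\sigma^{-1}(e)$ in $E_d$. Hence $\sigma\circ E_d$ is obtained from $E_d$ by keeping the central edge $(2a-1,2a)$ in $\Omega_a^{(d)}$ and interchanging, for every vertex $v\notin\{2a-1,2a\}$, the colors of the two edges $(v,2a-1)$ and $(v,2a)$. In particular $E_d$ and $\sigma\circ E_d$ coincide on every edge not incident to $2a-1$ or $2a$, and their difference is supported on the edges joining $\{2a-1,2a\}$ to the remaining vertices.

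Next I would localize this difference. For each $c\neq a$ consider the complete subgraph $K_4(2a-1,2a,2c-1,2c)$. A direct check from the definition of $E_d$---each edge $(i,j)$ is placed in the component indexed by one of the two pairs containing its endpoints, according to the parity of $i+j$---shows that every one of its six edges lies in $\Omega_a^{(d)}$ or $\Omega_c^{(d)}$. Thus the restriction of $E_d$ to this $K_4$ is a cycle-free, hence homogeneous, $2$-partition in the two colors $\Omega_a^{(d)},\Omega_c^{(d)}$, and the same holds for $\sigma\circ E_d$, the two restrictions differing exactly on the four cross edges. Ordering the indices $c\neq a$ as $c_1,\dots,c_{d-1}$, I would build a chain $E_d=P_0,P_1,\dots,P_{d-1}=\sigma\circ E_d$ in which $P_k$ is obtained from $P_{k-1}$ by recoloring only the four cross edges between $\{2a-1,2a\}$ and $S_{c_k}$ to their $\sigma\circ E_d$ values; because the central edge $(2a-1,2a)$ and each internal edge $(2c-1,2c)$ has the same color in $E_d$ and in $\sigma\circ E_d$, these recolorings are independent and $P_{k-1},P_k$ coincide outside $K_4(2a-1,2a,2c_k-1,2c_k)$.

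Finally, for each step the restrictions of $P_{k-1}$ and $P_k$ to $K_4(2a-1,2a,2c_k-1,2c_k)$ are two cycle-free $2$-partitions of a copy of $K_4$, hence involution equivalent by the transitivity of the action on $\mathcal{P}_2^{cf}(K_4)$; Remark \ref{reKsubG} then upgrades this to $P_{k-1}$ being involution equivalent to $P_k$, and concatenating the chain proves that $\sigma\circ E_d$ is involution equivalent to $E_d$. The step I expect to require the most care is the independence of the per-pair recolorings: all the subgraphs $K_4(2a-1,2a,2c-1,2c)$ share the vertices $2a-1,2a$ and the central edge $(2a-1,2a)$, so one must check that this shared edge keeps its color $\Omega_a^{(d)}$ in every $P_k$---which it does, being $\Omega_a^{(d)}$ at both ends of the chain---so that consecutive partitions genuinely differ only inside a single $K_4$ and Remark \ref{reKsubG} applies.
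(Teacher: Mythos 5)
Your proposal is correct and follows essentially the same route as the paper's own proof: both localize the difference between $E_d$ and $(2a-1,2a)\circ E_d$ to the subgraphs $K_4(2a-1,2a,2c-1,2c)$ for $c\neq a$, note that the induced $2$-partitions there are cycle-free and hence involution equivalent by transitivity on $\mathcal{P}_2^{cf}(K_4)$, and lift this to $K_{2d}$ by repeated application of Remark \ref{reKsubG}. Your explicit chain $P_0,\dots,P_{d-1}$, together with the observation that the central edge $(2a-1,2a)$ keeps its color so that consecutive terms differ only inside one $K_4$, is precisely the ``repeatedly'' that the paper leaves implicit; the only detail worth adding is that each intermediate $P_k$ is itself a cycle-free $d$-partition (as Remark \ref{reKsubG} requires), which is immediate since every component of $P_k$ is still a twin star.
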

\begin{proof}  First one should notice that $E_d$ and $(2a-1,2a)\circ E_d$ coincide everywhere except on the complete subgraphs  $K_4(2a-1,2a,2x-1,2x)$ where $1\leq x\leq d$, $x\neq a$.

Take $x$ such that $1\leq a<x\leq d$. If we restrict the $d$-partition $E_d$ to the complete graph $K_4(2a-1,2a,2x-1,2x)$ then we get the $2$-partition of $K_4$ from Figure \ref{EdK4ab}.

 \begin{figure}[H]
	\centering
	\begin{tikzpicture}
		[scale=1,auto=left,every node/.style={shape = circle, draw, fill = white,minimum size = 4pt, inner sep=1pt}]
		\node[shape=circle,draw=black,minimum size = 24pt,inner sep=0.5pt] (n1) at (-2,3) {{\small \it 2a-1}};
		\node[shape=circle,draw=black,minimum size = 24pt,inner sep=0.5pt] (n2) at (0,3) {{\small \it 2a}};
			\node[shape=circle,draw=black,minimum size = 24pt,inner sep=0.5pt] (n3) at (0,1) {{\small \it 2x-1}};
				\node[shape=circle,draw=black,minimum size = 24pt,inner sep=0.5pt] (n4) at (-2,1) {{\small \it 2x}};
		\foreach \from/\to in {n1/n2,n1/n3,n2/n4}
		\draw[line width=0.6mm,red]  (\from) -- (\to);	
		\node[shape=circle,draw=black,minimum size = 24pt,inner sep=0.5pt] (n12) at (3,3) {{\small \it 2a-1}};
		\node[shape=circle,draw=black,minimum size = 24pt,inner sep=0.5pt] (n22) at (5,3) {{\small \it 2a}};
		\node[shape=circle,draw=black,minimum size = 24pt,inner sep=0.5pt] (n32) at (5,1) {{\small \it 2x-1}};
				\node[shape=circle,draw=black,minimum size = 24pt,inner sep=0.5pt] (n42) at (3,1) {{\small \it 2x}};
		\foreach \from/\to in {n12/n42,n22/n32,n32/n42}
		\draw[line width=0.6mm,blue]  (\from) -- (\to);	
	\end{tikzpicture}
	\caption{ $E_d$ restricted to  $K_4(2a-1,2a,2x-1,2x)$ } \label{EdK4ab}
\end{figure}
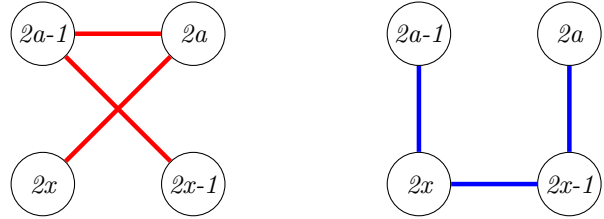

Next, if we restrict the $d$-partition $(2a-1,2a)\circ E_d$ to the complete graph $K_4(2a-1,2a,2x-1,2x)$ we obtain the $2$-partition of $K_4$ from Figure \ref{EdK4ab2}.

    \begin{figure}[h!]
	\centering
	\begin{tikzpicture}
		[scale=1,auto=left,every node/.style={shape = circle, draw, fill = white,minimum size = 4pt, inner sep=1pt}]
				\node[shape=circle,draw=black,minimum size = 24pt,inner sep=0.5pt] (n1) at (-2,3) {{\small \it 2a-1}};
		\node[shape=circle,draw=black,minimum size = 24pt,inner sep=0.5pt] (n2) at (0,3) {{\small \it 2a}};
			\node[shape=circle,draw=black,minimum size = 24pt,inner sep=0.5pt] (n3) at (0,1) {{\small \it 2x-1}};
				\node[shape=circle,draw=black,minimum size = 24pt,inner sep=0.5pt] (n4) at (-2,1) {{\small \it 2x}};
		\foreach \from/\to in {n1/n2,n1/n4,n2/n3}
			\draw[line width=0.6mm,red]  (\from) -- (\to);	
\node[shape=circle,draw=black,minimum size = 24pt,inner sep=0.5pt] (n12) at (3,3) {{\small \it 2a-1}};
		\node[shape=circle,draw=black,minimum size = 24pt,inner sep=0.5pt] (n22) at (5,3) {{\small \it 2a}};
		\node[shape=circle,draw=black,minimum size = 24pt,inner sep=0.5pt] (n32) at (5,1) {{\small \it 2x-1}};
				\node[shape=circle,draw=black,minimum size = 24pt,inner sep=0.5pt] (n42) at (3,1) {{\small \it 2x}};
		\foreach \from/\to in {n12/n32,n22/n42,n32/n42}
		\draw[line width=0.6mm,blue]  (\from) -- (\to);	
	\end{tikzpicture}
	\caption{ $(2a-1,2a)\circ E_d$  restricted to  $K_4(2a-1,2a,2x-1,2x)$  } \label{EdK4ab2}
\end{figure}
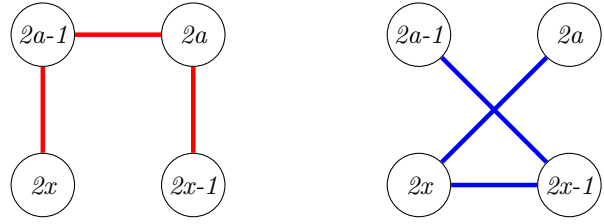
Since every cycle-free $2$-partitions of the graph $K_4$ are involution equivalent, we have that the $2$-partitions from Figures \ref{EdK4ab} and \ref{EdK4ab2} are involution equivalent.

A similar argument shows that if $1\leq y<a\leq d$ then the restriction of $E_d$ and of $(2a-1,2a)\circ E_d$ to the complete graph $K_4(2y-1,2y,2a-1,2a)$ are cycle-free $2$-partitions that are involution equivalent.

To summarize, we have that the cycle-free $d$-partitions $E_d$ and $(2a-1,2a)\circ E_d$ coincide everywhere except on the sub graphs $K_4(2a-1,2a,2x-1,2x)$ for all $1\leq a<x\leq d$, and on the sub graphs $K_4(2y-1,2y,2a-1,2a)$ for all $1\leq y<a\leq d$.  Moreover, on these subgraphs the induced $2$-partitions are involution equivalent. Using repeatedly Remark \ref{reKsubG} we get that $E_d$ and $(2a-1,2a)\circ E_d$ are involution equivalent.
\end{proof}

\begin{lemma} Let $1\leq a\leq d$ then $(2a,2a+1)\circ E_d$ is involution equivalent to $E_d$. \label{lemmaS2d2}
\end{lemma}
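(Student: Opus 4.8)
The plan is to mirror the proof of Lemma \ref{lemmaS2d1}, but to replace the complete subgraphs $K_4$ by complete subgraphs $K_6$. The transposition $(2a,2a+1)$ is the genuinely new case because, unlike $(2a-1,2a)$, it mixes the two blocks $S_a$ and $S_{a+1}$, so the disagreement between $E_d$ and its image can no longer be localized on $K_4$'s lying over a single pair of blocks.

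First I would determine exactly where $E_d$ and $(2a,2a+1)\circ E_d$ disagree. Writing $\sigma=(2a,2a+1)$, an edge is relabeled by $\sigma$ only if it is incident to exactly one of the vertices $2a,2a+1$; for such an edge $e=(v,2a)$ one has $\sigma(e)=(v,2a+1)$, so $E_d$ and $\sigma\circ E_d$ differ on $e$ precisely when the two edges $(v,2a)$ and $(v,2a+1)$ belong to different graphs of $E_d$. A direct check from the definition of $E_d$ shows that this happens for every vertex $v\notin\{2a-1,2a,2a+1\}$ and fails only for $v=2a-1$. Hence the differing edges are exactly the edges $(v,2a),(v,2a+1)$ with $v\neq 2a-1$, and each of them lies inside one of the complete subgraphs
$$K_6(2a-1,2a,2a+1,2a+2,2x-1,2x),\qquad x\neq a,a+1,$$
the two edges coming from $v=2a+2$ lying in the common $K_4(2a-1,2a,2a+1,2a+2)$. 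When $d=2$ there is no such $x$ and the whole disagreement is confined to $K_4(1,2,3,4)$, which is handled by the transitivity of cycle-free $2$-partitions of $K_4$ exactly as in Lemma \ref{lemmaS2d1}.

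For $d\geq 3$ I would then argue one subgraph at a time. On each $K_6(2a-1,2a,2a+1,2a+2,2x-1,2x)$ the edges used by $E_d$ (and by $\sigma\circ E_d$) lie only in the three blocks $S_a,S_{a+1},S_x$, so both partitions restrict to cycle-free $3$-partitions of $K_6$; by the fact that every cycle-free $3$-partition of $K_6$ is involution equivalent to $E_3$, these two restrictions are involution equivalent. Using Remark \ref{reKsubG} I would replace the restriction of $E_d$ on the first such $K_6$ by that of $\sigma\circ E_d$, then repeat over the remaining values of $x$, assembling a chain of involution equivalences from $E_d$ to $\sigma\circ E_d$.

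The main obstacle is the bookkeeping of the overlaps. Distinct members of this $K_6$-family all share the block pair $K_4(2a-1,2a,2a+1,2a+2)$, and that shared $K_4$ does contain two differing edges, namely $(2a,2a+2)$ and $(2a+1,2a+2)$ — in contrast to Lemma \ref{lemmaS2d1}, where the subgraphs met only along the unchanged edge $(2a-1,2a)$. What makes the argument go through is that the transitivity invoked on $K_6$ is total: \emph{any} two cycle-free $3$-partitions of a fixed $K_6$ are involution equivalent. Thus at each stage the current partition, restricted to the next $K_6$, is merely some cycle-free $3$-partition and can be driven to the target restriction by involutions supported inside that $K_6$, which alter no edge outside it. Since after processing each subgraph the shared $K_4$ already agrees with $\sigma\circ E_d$, the edges fixed at earlier stages are preserved, and the repeated application of Remark \ref{reKsubG} terminates with $E_d$ involution equivalent to $(2a,2a+1)\circ E_d$.
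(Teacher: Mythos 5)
Your proposal is correct and follows essentially the same route as the paper's own proof: both localize the disagreement between $E_d$ and $(2a,2a+1)\circ E_d$ to the family of subgraphs $K_6(2x-1,2x,2a-1,2a,2a+1,2a+2)$ overlapping in $K_4(2a-1,2a,2a+1,2a+2)$, and then iterate the $d=3$ transitivity result through Remark \ref{reKsubG}, taking care that the successive $K_6$ replacements agree on the shared $K_4$. The only difference is organizational: the paper keeps the shared $K_4$ at the values of $(2a,2a+1)\circ E_d$ throughout the $K_6$ steps by introducing an explicit auxiliary $3$-partition and then finishes with a separate $K_4$ step using the transitivity of cycle-free $2$-partitions, whereas you flip the $K_4$ on the first $K_6$ step and preserve it afterwards, which absorbs that final step.
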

\begin{proof} First one should notice that $E_d$ and $(2a,2a+1)\circ E_d$ coincide everywhere except on the complete subgraphs $K_6(2x-1,2x,2a-1,2a,2a+1,2a+2)$ where $1\leq x\leq d$ and $a\neq x\neq a+1$.

Take $x$ such that $1\leq x<a\leq d$. In Figure \ref{EdK6ax} we have the restriction of $E_d$ to the graph $K_6(2x-1,2x,2a-1,2a,2a+1,2a+2)$.
    \begin{figure}[H]
	\centering
	\begin{tikzpicture}
		[scale=1.5,auto=left,every node/.style={shape = circle, draw, fill = white,minimum size = 4pt, inner sep=1pt}]

\node[shape=circle,draw=black,minimum size = 24pt,inner sep=0.5pt] (n1) at (-4,-1) {{\small \it 2x-1}};
\node[shape=circle,draw=black,minimum size = 24pt,inner sep=0.5pt] (n2) at (-3.5,-0.15) {{\small \it 2x}};
\node[shape=circle,draw=black,minimum size = 24pt,inner sep=0.5pt] (n3) at (-2.5,-0.15) {{\small \it 2a-1}};
\node[shape=circle,draw=black,minimum size = 24pt,inner sep=0.5pt] (n4) at (-2,-1) {{\small \it 2a}};
\node[shape=circle,draw=black,minimum size = 24pt,inner sep=0.5pt] (n5) at (-2.5,-1.85) {{\small \it 2a+1}};
\node[shape=circle,draw=black,minimum size = 24pt,inner sep=0.5pt] (n6) at (-3.5,-1.85) {{\small \it 2a+2}};
		\foreach \from/\to in {n1/n2,n1/n3,n1/n5,n2/n4,n2/n6}
		\draw[line width=0.5mm,red]  (\from) -- (\to);	
\node[shape=circle,draw=black,minimum size = 24pt,inner sep=0.5pt] (n11) at (0,-1) {{\small \it 2x-1}};
\node[shape=circle,draw=black,minimum size = 24pt,inner sep=0.5pt] (n21) at (0.5,-0.15) {{\small \it 2x}};
\node[shape=circle,draw=black,minimum size = 24pt,inner sep=0.5pt] (n31) at (1.5,-0.15) {{\small \it 2a-1}};
\node[shape=circle,draw=black,minimum size = 24pt,inner sep=0.5pt] (n41) at (2,-1) {{\small \it 2a}};
\node[shape=circle,draw=black,minimum size = 24pt,inner sep=0.5pt] (n51) at (1.5,-1.85) {{\small \it 2a+1}};
\node[shape=circle,draw=black,minimum size = 24pt,inner sep=0.5pt] (n61) at (0.5,-1.85) {{\small \it 2a+2}};

		\foreach \from/\to in {n31/n41,n31/n21,n31/n51,n41/n11,n41/n61}
		\draw[line width=0.5mm,orange]  (\from) -- (\to);	
		
		\node[shape=circle,draw=black,minimum size = 24pt,inner sep=0.5pt] (n12) at (3.5,-1) {{\small \it 2x-1}};
\node[shape=circle,draw=black,minimum size = 24pt,inner sep=0.5pt] (n22) at (4,-0.15) {{\small \it 2x}};
\node[shape=circle,draw=black,minimum size = 24pt,inner sep=0.5pt] (n32) at (5,-0.15) {{\small \it 2a-1}};
\node[shape=circle,draw=black,minimum size = 24pt,inner sep=0.5pt] (n42) at (5.5,-1) {{\small \it 2a}};
\node[shape=circle,draw=black,minimum size = 24pt,inner sep=0.5pt] (n52) at (5,-1.85) {{\small \it 2a+1}};
\node[shape=circle,draw=black,minimum size = 24pt,inner sep=0.5pt] (n62) at (4,-1.85) {{\small \it 2a+2}};
		
		\foreach \from/\to in {n52/n62,n52/n22,n52/n42,n62/n12,n62/n32}
		\draw[line width=0.5mm,blue]  (\from) -- (\to);
		
	\end{tikzpicture}
	\caption{$E_d$  restricted to  $K_6(2x-1,2x,2a-1,2a,2a+1,2a+2)$ } \label{EdK6ax}
\end{figure}
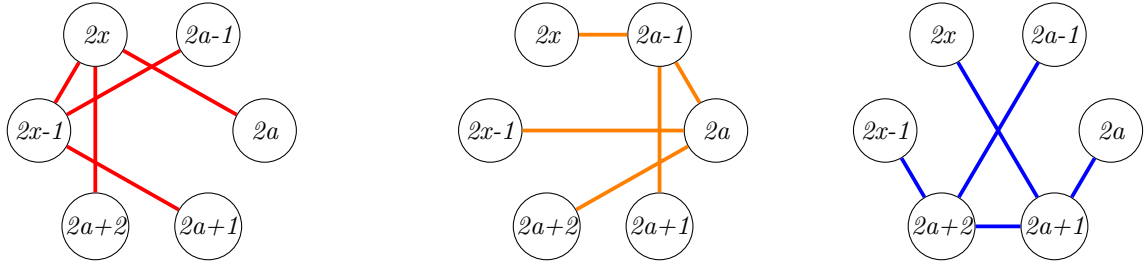
In Figure \ref{EdK6ax2} we have the restriction of $(2a,2a+1)\circ E_d$ to the graph $K_6(2x-1,2x,2a-1,2a,2a+1,2a+2)$.

   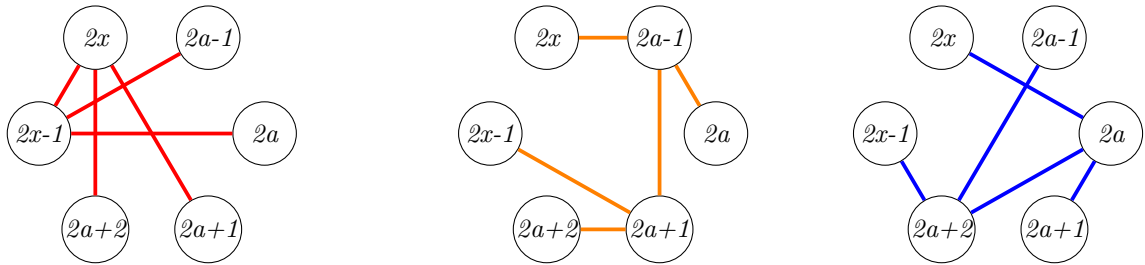
\begin{figure}[H]
	\centering
	\begin{tikzpicture}
		[scale=1.5,auto=left,every node/.style={shape = circle, draw, fill = white,minimum size = 4pt, inner sep=1pt}]

\node[shape=circle,draw=black,minimum size = 24pt,inner sep=0.5pt] (n1) at (-4,-1) {{\small \it 2x-1}};
\node[shape=circle,draw=black,minimum size = 24pt,inner sep=0.5pt] (n2) at (-3.5,-0.15) {{\small \it 2x}};
\node[shape=circle,draw=black,minimum size = 24pt,inner sep=0.5pt] (n3) at (-2.5,-0.15) {{\small \it 2a-1}};
\node[shape=circle,draw=black,minimum size = 24pt,inner sep=0.5pt] (n5) at (-2,-1) {{\small \it 2a}};
\node[shape=circle,draw=black,minimum size = 24pt,inner sep=0.5pt] (n4) at (-2.5,-1.85) {{\small \it 2a+1}};
\node[shape=circle,draw=black,minimum size = 24pt,inner sep=0.5pt] (n6) at (-3.5,-1.85) {{\small \it 2a+2}};
		\foreach \from/\to in {n1/n2,n1/n3,n1/n5,n2/n4,n2/n6}
		\draw[line width=0.5mm,red]  (\from) -- (\to);	
\node[shape=circle,draw=black,minimum size = 24pt,inner sep=0.5pt] (n11) at (0,-1) {{\small \it 2x-1}};
\node[shape=circle,draw=black,minimum size = 24pt,inner sep=0.5pt] (n21) at (0.5,-0.15) {{\small \it 2x}};
\node[shape=circle,draw=black,minimum size = 24pt,inner sep=0.5pt] (n31) at (1.5,-0.15) {{\small \it 2a-1}};
\node[shape=circle,draw=black,minimum size = 24pt,inner sep=0.5pt] (n51) at (2,-1) {{\small \it 2a}};
\node[shape=circle,draw=black,minimum size = 24pt,inner sep=0.5pt] (n41) at (1.5,-1.85) {{\small \it 2a+1}};
\node[shape=circle,draw=black,minimum size = 24pt,inner sep=0.5pt] (n61) at (0.5,-1.85) {{\small \it 2a+2}};

		\foreach \from/\to in {n31/n41,n31/n21,n31/n51,n41/n11,n41/n61}
		\draw[line width=0.5mm,orange]  (\from) -- (\to);	
		
		\node[shape=circle,draw=black,minimum size = 24pt,inner sep=0.5pt] (n12) at (3.5,-1) {{\small \it 2x-1}};
\node[shape=circle,draw=black,minimum size = 24pt,inner sep=0.5pt] (n22) at (4,-0.15) {{\small \it 2x}};
\node[shape=circle,draw=black,minimum size = 24pt,inner sep=0.5pt] (n32) at (5,-0.15) {{\small \it 2a-1}};
\node[shape=circle,draw=black,minimum size = 24pt,inner sep=0.5pt] (n52) at (5.5,-1) {{\small \it 2a}};
\node[shape=circle,draw=black,minimum size = 24pt,inner sep=0.5pt] (n42) at (5,-1.85) {{\small \it 2a+1}};
\node[shape=circle,draw=black,minimum size = 24pt,inner sep=0.5pt] (n62) at (4,-1.85) {{\small \it 2a+2}};
		
		\foreach \from/\to in {n52/n62,n52/n22,n52/n42,n62/n12,n62/n32}
		\draw[line width=0.5mm,blue]  (\from) -- (\to);
		
	\end{tikzpicture}
	\caption{$(2a,2a+1)\circ E_d$  restricted to  $K_6(2x-1,2x,2a-1,2a,2a+1,2a+2)$ } \label{EdK6ax2}
\end{figure}

Finally, in Figure \ref{EdK6ax3} we have a $3$-partition of the complete graph $K_6(2x-1,2x,2a-1,2a,2a+1,2a+2)$ which will be used to show that  $E_d$ and $(2a,2a+1)\circ E_d$ are involution equivalent. Indeed, first notice that since the $3$-partitions from Figure \ref{EdK6ax}, \ref{EdK6ax2} and \ref{EdK6ax3} are cycle-free $3$-partitions of the complete graph $K_6(2x-1,2x,2a-1,2a,2a+1,2a+2)$, they  are involution equivalent.

    \begin{figure}[h!]
	\centering
	\begin{tikzpicture}
		[scale=1.5,auto=left,every node/.style={shape = circle, draw, fill = white,minimum size = 4pt, inner sep=1pt}]
		
		\node[shape=circle,draw=black,minimum size = 24pt,inner sep=0.5pt] (n1) at (-4,-1) {{\small \it 2x-1}};
\node[shape=circle,draw=black,minimum size = 24pt,inner sep=0.5pt] (n2) at (-3.5,-0.15) {{\small \it 2x}};
\node[shape=circle,draw=black,minimum size = 24pt,inner sep=0.5pt] (n3) at (-2.5,-0.15) {{\small \it 2a-1}};
\node[shape=circle,draw=black,minimum size = 24pt,inner sep=0.5pt] (n4) at (-2,-1) {{\small \it 2a}};
\node[shape=circle,draw=black,minimum size = 24pt,inner sep=0.5pt] (n5) at (-2.5,-1.85) {{\small \it 2a+1}};
\node[shape=circle,draw=black,minimum size = 24pt,inner sep=0.5pt] (n6) at (-3.5,-1.85) {{\small \it 2a+2}};
		
		\foreach \from/\to in {n1/n2,n1/n3,n1/n5,n2/n4,n2/n6}
		\draw[line width=0.5mm,red]  (\from) -- (\to);	
		
		\node[shape=circle,draw=black,minimum size = 24pt,inner sep=0.5pt] (n11) at (0,-1) {{\small \it 2x-1}};
\node[shape=circle,draw=black,minimum size = 24pt,inner sep=0.5pt] (n21) at (0.5,-0.15) {{\small \it 2x}};
\node[shape=circle,draw=black,minimum size = 24pt,inner sep=0.5pt] (n31) at (1.5,-0.15) {{\small \it 2a-1}};
\node[shape=circle,draw=black,minimum size = 24pt,inner sep=0.5pt] (n41) at (2,-1) {{\small \it 2a}};
\node[shape=circle,draw=black,minimum size = 24pt,inner sep=0.5pt] (n51) at (1.5,-1.85) {{\small \it 2a+1}};
\node[shape=circle,draw=black,minimum size = 24pt,inner sep=0.5pt] (n61) at (0.5,-1.85) {{\small \it 2a+2}};

		\foreach \from/\to in {n21/n31,n31/n41,n31/n51,n41/n11,n51/n61}
		\draw[line width=0.5mm,orange]  (\from) -- (\to);	
\node[shape=circle,draw=black,minimum size = 24pt,inner sep=0.5pt] (n12) at (3.5,-1) {{\small \it 2x-1}};
\node[shape=circle,draw=black,minimum size = 24pt,inner sep=0.5pt] (n22) at (4,-0.15) {{\small \it 2x}};
\node[shape=circle,draw=black,minimum size = 24pt,inner sep=0.5pt] (n32) at (5,-0.15) {{\small \it 2a-1}};
\node[shape=circle,draw=black,minimum size = 24pt,inner sep=0.5pt] (n42) at (5.5,-1) {{\small \it 2a}};
\node[shape=circle,draw=black,minimum size = 24pt,inner sep=0.5pt] (n52) at (5,-1.85) {{\small \it 2a+1}};
\node[shape=circle,draw=black,minimum size = 24pt,inner sep=0.5pt] (n62) at (4,-1.85) {{\small \it 2a+2}};

		\foreach \from/\to in {n12/n62,n62/n32,n22/n52,n62/n42,n42/n52}
		\draw[line width=0.5mm,blue]  (\from) -- (\to);
		
	\end{tikzpicture}
	\caption{Another $3$-partition of   $K_6(2x-1,2x,2a-1,2a,2a+1,2a+2)$ } \label{EdK6ax3}
\end{figure}
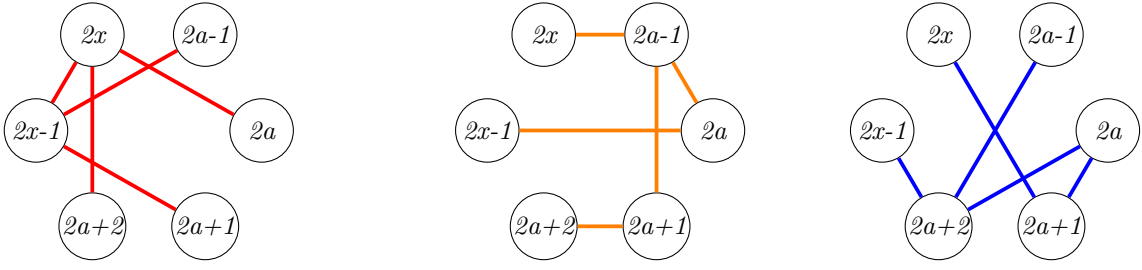

Moreover, notice the restrictions of the partitions $(2a,2a+1)\circ E_d$ (i.e. Figure \ref{EdK6ax2}) and of the partition from Figure \ref{EdK6ax3} coincide on the sub-graph $K_4(2a-1,2a,2a+1,2a+2)$. So, we can apply repeatedly the involution equivalence of these $3$-partitions and Remark \ref{reKsubG} to rearrange  the edges $(2x-1,2x)$, $(2x-1,2a-1)$, $(2x-1,2a+1)$ $(2x,2a)$ and $(2x,2a+2)$ of the partition $(2a,2a+1)\circ E_d$ so that they coincide with those of the partition $E_d$ (see Figures \ref{EdK6ax2} and \ref{EdK6ax3}). A similar argument can be used for the case  $1\leq a+1<y\leq d$ and the corresponding edges $(2a-1,2y)$, $(2a,2y-1)$, $(2a+1,2y)$, $(2a+2,2y-1)$, and $(2y-1,2y)$ of the partition $(2a,2a+1)\circ E_d$.

To conclude, notice that $E_d$ and $(2a,2a+1)\circ E_d$ induce on the complete graph $K_4(2a-1,2a,2a+1,2a+2)$ the cycle free $2$-partitions from Figures \ref{EdK4aa} and \ref{EdK4aa2} respectively, which are involution equivalent.

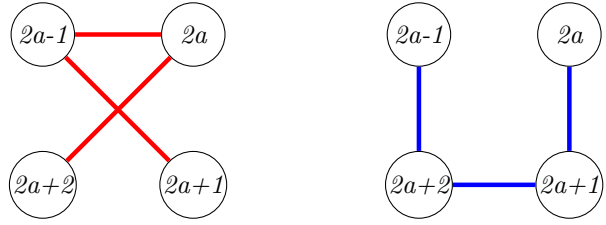
\begin{figure}[H]
	\centering
	\begin{tikzpicture}
		[scale=1,auto=left,every node/.style={shape = circle, draw, fill = white,minimum size = 4pt, inner sep=1pt}]
				\node[shape=circle,draw=black,minimum size = 24pt,inner sep=0.5pt] (n1) at (-2,3) {{\small \it 2a-1}};
		\node[shape=circle,draw=black,minimum size = 24pt,inner sep=0.5pt] (n2) at (0,3) {{\small \it 2a}};
			\node[shape=circle,draw=black,minimum size = 24pt,inner sep=0.5pt] (n3) at (0,1) {{\small \it 2a+1}};
				\node[shape=circle,draw=black,minimum size = 24pt,inner sep=0.5pt] (n4) at (-2,1) {{\small \it 2a+2}};
		\foreach \from/\to in {n1/n2,n1/n3,n4/n2}
			\draw[line width=0.6mm,red]  (\from) -- (\to);	
\node[shape=circle,draw=black,minimum size = 24pt,inner sep=0.5pt] (n12) at (3,3) {{\small \it 2a-1}};
		\node[shape=circle,draw=black,minimum size = 24pt,inner sep=0.5pt] (n22) at (5,3) {{\small \it 2a}};
		\node[shape=circle,draw=black,minimum size = 24pt,inner sep=0.5pt] (n32) at (5,1) {{\small \it 2a+1}};
				\node[shape=circle,draw=black,minimum size = 24pt,inner sep=0.5pt] (n42) at (3,1) {{\small \it 2a+2}};
		\foreach \from/\to in {n12/n42,n32/n42,n32/n22}
		\draw[line width=0.6mm,blue]  (\from) -- (\to);	
	\end{tikzpicture}
	\caption{ $E_d$  restricted to  $K_4(2a-1,2a,2a+1,2a+2)$  } \label{EdK4aa}
\end{figure}

    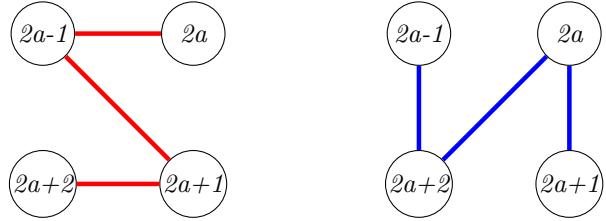
\begin{figure}[H]
	\centering
	\begin{tikzpicture}
		[scale=1,auto=left,every node/.style={shape = circle, draw, fill = white,minimum size = 4pt, inner sep=1pt}]
				\node[shape=circle,draw=black,minimum size = 24pt,inner sep=0.5pt] (n1) at (-2,3) {{\small \it 2a-1}};
		\node[shape=circle,draw=black,minimum size = 24pt,inner sep=0.5pt] (n2) at (0,3) {{\small \it 2a}};
			\node[shape=circle,draw=black,minimum size = 24pt,inner sep=0.5pt] (n3) at (0,1) {{\small \it 2a+1}};
				\node[shape=circle,draw=black,minimum size = 24pt,inner sep=0.5pt] (n4) at (-2,1) {{\small \it 2a+2}};
		\foreach \from/\to in {n1/n2,n1/n3,n4/n3}
			\draw[line width=0.6mm,red]  (\from) -- (\to);	
\node[shape=circle,draw=black,minimum size = 24pt,inner sep=0.5pt] (n12) at (3,3) {{\small \it 2a-1}};
		\node[shape=circle,draw=black,minimum size = 24pt,inner sep=0.5pt] (n22) at (5,3) {{\small \it 2a}};
		\node[shape=circle,draw=black,minimum size = 24pt,inner sep=0.5pt] (n32) at (5,1) {{\small \it 2a+1}};
				\node[shape=circle,draw=black,minimum size = 24pt,inner sep=0.5pt] (n42) at (3,1) {{\small \it 2a+2}};
		\foreach \from/\to in {n12/n42,n22/n42,n32/n22}
		\draw[line width=0.6mm,blue]  (\from) -- (\to);	
	\end{tikzpicture}
	\caption{ $(2a,2a+1)\circ E_d$  restricted to  $K_4(2a-1,2a,2a+1,2a+2)$  } \label{EdK4aa2}
\end{figure}

And so from  Remark \ref{reKsubG}  we have that $E_d$ and $(2a,2a+1)\circ E_d$ are involution equivalent.

\end{proof}

\begin{lemma} Let $1\leq c\leq d$. Take $\tau=(c,c+1)\in S_d$, and $\sigma=(2c-1,2c+1)(2c,2c+2)\in S_{2d}$  then $\tau\odot E_d= \sigma\circ E_d$. \label{lemmaSd}
\end{lemma}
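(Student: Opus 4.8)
The plan is to deduce the lemma from the single \emph{equivariance} identity
\[
\sigma\circ\Omega_t^{(d)}=\Omega_{\tau(t)}^{(d)}\qquad\text{for all }1\le t\le d .
\]
This suffices: since $\tau=(c,c+1)$ is an involution, the $k$-th component of $\tau\odot E_d$ is $\Omega_{\tau^{-1}(k)}^{(d)}=\Omega_{\tau(k)}^{(d)}$, while the $k$-th component of $\sigma\circ E_d$ is $\sigma\circ\Omega_k^{(d)}$; the displayed identity (with $t=k$) matches these for every $k$, giving $\tau\odot E_d=\sigma\circ E_d$. So the whole matter reduces to understanding how the vertex permutation $\sigma$ interacts with the rule assigning each edge of $K_{2d}$ to its component of $E_d$.

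I would first record that rule compactly. Writing $p(v)=\lceil v/2\rceil$ for the index of the pair $S_{p(v)}\ni v$, the definition of $\Omega_t^{(d)}$ says that an edge $\{i,j\}$ with $i<j$ lies in $\Omega_{\mathrm{comp}\{i,j\}}^{(d)}$, where $\mathrm{comp}\{i,j\}=p(i)$ when $i+j$ is even (endpoints of equal parity) and $\mathrm{comp}\{i,j\}=p(j)$ when $i+j$ is odd (opposite parity). In this language the identity $\sigma\circ\Omega_t^{(d)}=\Omega_{\tau(t)}^{(d)}$ is exactly
\[
\mathrm{comp}\{\sigma(i),\sigma(j)\}=\tau\bigl(\mathrm{comp}\{i,j\}\bigr)\qquad\text{for every edge }\{i,j\}.
\]
The structural inputs are that $\sigma$ fixes every vertex outside the block $T:=\{2c-1,2c,2c+1,2c+2\}=S_c\cup S_{c+1}$, and that its restriction to $T$ interchanges the two pairs $S_c$ and $S_{c+1}$, so that $p(\sigma(v))=\tau(p(v))$ for all $v$.

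Next I would verify the identity by splitting on how many endpoints of $\{i,j\}$ lie in $T$. If neither does, $\sigma$ fixes both endpoints and $p(i),p(j)\notin\{c,c+1\}$, so $\mathrm{comp}\{i,j\}\notin\{c,c+1\}$ is fixed by $\tau$ and both sides agree. If exactly one endpoint lies in $T$, I would track whether the \emph{determining} endpoint (the smaller one for equal parity, the larger one for opposite parity) remains determining after applying $\sigma$; because $\sigma$ moves vertices only within the four consecutive labels of $T$, the change of order is confined to a controlled range, and combining this with $p\circ\sigma=\tau\circ p$ yields $\mathrm{comp}\{\sigma(i),\sigma(j)\}=\tau(\mathrm{comp}\{i,j\})$. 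Finally, when both endpoints lie in $T$, the claim is a finite check on the six edges of $K_4(2c-1,2c,2c+1,2c+2)$, carried out by tabulating $\mathrm{comp}$ and $\mathrm{comp}\circ\sigma$ directly from the explicit action of $\sigma$ on these four vertices. As in the proofs of Lemmas \ref{lemmaS2d1} and \ref{lemmaS2d2}, it is cleanest to organize the bookkeeping by restricting both partitions to $K_4(2c-1,2c,2c+1,2c+2)$ and to each $K_4$ or $K_6$ spanned by $S_c\cup S_{c+1}$ together with one further pair $S_x$; these subgraphs cover every edge that meets $T$, and on each of them the equality is an immediate computation.

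I expect the delicate point to be precisely the edges meeting $T$, and the reason is that $\sigma$ need not preserve the parity of individual vertices inside the block. Consequently $\sigma$ can move an edge between the ``equal-parity/smaller'' and the ``opposite-parity/larger'' branches of the assignment rule, and what must be checked is that these branch switches conspire to realize exactly the transposition $\tau$ on components, and not some other reindexing. In particular the verification genuinely uses the explicit images of the four vertices of $T$ under $\sigma$, rather than only the pair-index relation $p\circ\sigma=\tau\circ p$; isolating and confirming this interplay for the in-block and mixed edges is the heart of the argument, after which the untouched edges make the global equality $\tau\odot E_d=\sigma\circ E_d$ automatic.
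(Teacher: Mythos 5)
Your reduction to the equivariance identity $\sigma\circ\Omega_t^{(d)}=\Omega_{\tau(t)}^{(d)}$ and your treatment of edges with at most one endpoint in $T=\{2c-1,2c,2c+1,2c+2\}$ are sound (for such edges $\sigma$ preserves the order of the endpoints and the parity of their sum, and $p\circ\sigma=\tau\circ p$ does the rest). The fatal problem is exactly the step you defer as ``an immediate computation'': the finite check on the six edges inside $T$ fails, and it cannot be made to succeed because the statement is false. Indeed, the edge $(2c-1,2c+1)$ has even endpoint sum, so $\mathrm{comp}\{2c-1,2c+1\}=p(2c-1)=c$ and this edge lies in $\Omega_c^{(d)}$; but $\sigma=(2c-1,2c+1)(2c,2c+2)$ merely swaps its two endpoints, so the edge is fixed by $\sigma$ as a set, hence lies in $\sigma\circ\Omega_c^{(d)}$ and not in $\Omega_{c+1}^{(d)}$. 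The edges $(2c,2c+2)$, $(2c-1,2c+2)$, $(2c,2c+1)$ fail in the same way; only $(2c-1,2c)$ and $(2c+1,2c+2)$ transform as claimed. Concretely, for $d=2$, $c=1$: $\Omega_1^{(2)}=\{(1,2),(1,3),(2,4)\}$, $\Omega_2^{(2)}=\{(1,4),(2,3),(3,4)\}$, and $\sigma=(1,3)(2,4)$ gives $\sigma\circ\Omega_1^{(2)}=\{(3,4),(1,3),(2,4)\}\neq\Omega_2^{(2)}$. Note also that your diagnosis of the delicate point is inverted: $\sigma$ \emph{does} preserve the parity of every vertex (odd goes to odd, even to even), and that is precisely why the two diagonal edges $(2c-1,2c+1)$ and $(2c,2c+2)$ of $\Omega_c^{(d)}$ can never leave the $c$-th component.

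Moreover this cannot be repaired by a cleverer choice of $\sigma$: for $d=3$ and $\tau=(1,2)$ there is no $\sigma\in S_6$ with $\sigma\circ E_3=\tau\odot E_3$ at all. Any such $\sigma$ must carry the degree-three vertices $\{1,2\}$ of $\Omega_1^{(3)}$ onto $\{3,4\}$, carry $\{3,4\}$ onto $\{1,2\}$, and stabilize $\{5,6\}$; the edges $(1,3)$ and $(1,5)$ of $\Omega_1^{(3)}$ then force $\sigma=(1\,3\,2\,4)$ or $\sigma=(1\,4\,2\,3)(5\,6)$, and both send $(3,5)\in\Omega_2^{(3)}$ into $\Omega_3^{(3)}$ rather than $\Omega_1^{(3)}$. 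So Lemma \ref{lemmaSd} itself is incorrect as stated, and the paper's one-line justification glosses over the same in-block edges that you did. What survives, and what the appendix actually needs, is the weaker claim that $\tau\odot E_d$ is \emph{involution equivalent} to $\sigma\circ E_d$ (hence to $E_d$): your own case analysis shows the two partitions agree on every edge with at most one endpoint in $T$, while on $K_4(2c-1,2c,2c+1,2c+2)$ both restrict to cycle-free $2$-partitions occupying components $c$ and $c+1$; since any two cycle-free $2$-partitions of $K_4$ are involution equivalent, Remark \ref{reKsubG} yields the equivalence, and combining with Lemmas \ref{lemmaS2d1} and \ref{lemmaS2d2} still gives Lemma \ref{lemma3}. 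Had you actually carried out the tabulation you postponed, you would have discovered this and been led to the corrected statement.
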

\begin{proof}
This follows directly from the definition of $E_d$ and of the actions of $S_d$ and $S_{2d}$ on $\mathcal{P}_d^{cf}(K_{2d})$.
\end{proof}

\begin{remark} Since $S_{2d}$ is generated by the transpositions $(a,a+1)$ for all $1\leq a<2d$ it follows from Lemmas \ref{lemmaS2d1} and \ref{lemmaS2d2} that for all $\sigma\in S_{2d}$ the partition $\sigma\circ E_d$ is involution equivalent to $E_d$. Moreover, since $S_d$ is generated by $(c,c+1)$ for all $1\leq c<d$ using Lemma \ref{lemmaSd} we get that for all $\tau\in S_d$ the partition $\tau\odot E_d$ is involution equivalent to $E_d$. Combining these results we get the proof for Lemma \ref{lemma3}.

\end{remark}

\section{Trees with $8$ vertices}

\label{appendix2}
It is known that there are  $23$ isomorphism types of trees with $8$ vertices. For convenience, in this section we present their list taken from \cite{h} page 33.

 \begin{longtable}[c]{| c | c |}
\caption{Table of trees with $8$ vertices \label{tablepart1}}\\
 \hline
& Trees with $8$ vertices\\
 \hline
 \;&\;\\
 $T_1$ & \begin{tikzpicture}
  [scale=0.9,auto=left]
	\node[shape=circle,draw=black,minimum size = 6pt,inner sep=0.3pt] (n1) at (0,0) {};
	\node[shape=circle,draw=black,minimum size = 6pt,inner sep=0.3pt] (n2) at (1,0) {};
	\node[shape=circle,draw=black,minimum size = 6pt,inner sep=0.3pt] (n3) at (2,0) {};
  \node[shape=circle,draw=black,minimum size = 6pt,inner sep=0.3pt] (n4) at (3,0) {};
	\node[shape=circle,draw=black,minimum size = 6pt,inner sep=0.3pt] (n5) at (4,0) {};
  \node[shape=circle,draw=black,minimum size = 6pt,inner sep=0.3pt] (n6) at (5,0) {};
	\node[shape=circle,draw=black,minimum size = 6pt,inner sep=0.3pt] (n7) at (6,0) {};
  \node[shape=circle,draw=black,minimum size = 6pt,inner sep=0.3pt] (n8) at (7,0) {};

	  \draw[line width=0.6mm,blue]  (n1) -- (n2)  ;
		\draw[line width=0.6mm,blue]  (n2) -- (n3)  ;
		\draw[line width=0.6mm,blue]  (n3) -- (n4)  ;
		\draw[line width=0.6mm,blue]  (n4) -- (n5)  ;
		\draw[line width=0.6mm,blue]  (n5) -- (n6)  ;
		\draw[line width=0.6mm,blue]  (n6) -- (n7)  ;
		\draw[line width=0.6mm,blue]  (n7) -- (n8)  ;
\end{tikzpicture} \\

 \hline
 \;&\;\\
 $T_2$ & \begin{tikzpicture}
  [scale=0.9,auto=left]
	\node[shape=circle,draw=black,minimum size = 6pt,inner sep=0.3pt] (n1) at (0,0) {};
	\node[shape=circle,draw=black,minimum size = 6pt,inner sep=0.3pt] (n2) at (1,0) {};
	\node[shape=circle,draw=black,minimum size = 6pt,inner sep=0.3pt] (n3) at (2,0) {};
  \node[shape=circle,draw=black,minimum size = 6pt,inner sep=0.3pt] (n4) at (3,0) {};
	\node[shape=circle,draw=black,minimum size = 6pt,inner sep=0.3pt] (n5) at (4,0) {};
  \node[shape=circle,draw=black,minimum size = 6pt,inner sep=0.3pt] (n6) at (5,0) {};
	\node[shape=circle,draw=black,minimum size = 6pt,inner sep=0.3pt] (n7) at (6,0) {};
  \node[shape=circle,draw=black,minimum size = 6pt,inner sep=0.3pt] (n8) at (5,1) {};

	  \draw[line width=0.6mm,blue]  (n1) -- (n2)  ;
		\draw[line width=0.6mm,blue]  (n2) -- (n3)  ;
		\draw[line width=0.6mm,blue]  (n3) -- (n4)  ;
		\draw[line width=0.6mm,blue]  (n4) -- (n5)  ;
		\draw[line width=0.6mm,blue]  (n5) -- (n6)  ;
		\draw[line width=0.6mm,blue]  (n6) -- (n7)  ;
		\draw[line width=0.6mm,blue]  (n6) -- (n8)  ;
\end{tikzpicture}  \\
 \hline
 \;&\;\\
$T_3$ & \begin{tikzpicture}
  [scale=0.9,auto=left]
	\node[shape=circle,draw=black,minimum size = 6pt,inner sep=0.3pt] (n1) at (0,0) {};
	\node[shape=circle,draw=black,minimum size = 6pt,inner sep=0.3pt] (n2) at (1,0) {};
	\node[shape=circle,draw=black,minimum size = 6pt,inner sep=0.3pt] (n3) at (2,0) {};
  \node[shape=circle,draw=black,minimum size = 6pt,inner sep=0.3pt] (n4) at (3,0) {};
	\node[shape=circle,draw=black,minimum size = 6pt,inner sep=0.3pt] (n5) at (4,0) {};
  	\node[shape=circle,draw=black,minimum size = 6pt,inner sep=0.3pt] (n6) at (5,0) {};
    \node[shape=circle,draw=black,minimum size = 6pt,inner sep=0.3pt] (n7) at (6,0) {};
  \node[shape=circle,draw=black,minimum size = 6pt,inner sep=0.3pt] (n8) at (4,1) {};

	  \draw[line width=0.6mm,blue]  (n1) -- (n2)  ;
		\draw[line width=0.6mm,blue]  (n2) -- (n3)  ;
		\draw[line width=0.6mm,blue]  (n3) -- (n4)  ;
		\draw[line width=0.6mm,blue]  (n4) -- (n5)  ;
		\draw[line width=0.6mm,blue]  (n5) -- (n6)  ;
		\draw[line width=0.6mm,blue]  (n6) -- (n7)  ;
		\draw[line width=0.6mm,blue]  (n5) -- (n8)  ;
\end{tikzpicture} \\

 \hline
 \;&\;\\
$T_4$ & \begin{tikzpicture}
  [scale=0.9,auto=left]
	\node[shape=circle,draw=black,minimum size = 6pt,inner sep=0.3pt] (n1) at (1,0) {};
	\node[shape=circle,draw=black,minimum size = 6pt,inner sep=0.3pt] (n2) at (2,0) {};
  \node[shape=circle,draw=black,minimum size = 6pt,inner sep=0.3pt] (n3) at (3,0) {};
	\node[shape=circle,draw=black,minimum size = 6pt,inner sep=0.3pt] (n4) at (4,0) {};
  	\node[shape=circle,draw=black,minimum size = 6pt,inner sep=0.3pt] (n5) at (5,0) {};
    \node[shape=circle,draw=black,minimum size = 6pt,inner sep=0.3pt] (n6) at (6,0) {};
  \node[shape=circle,draw=black,minimum size = 6pt,inner sep=0.3pt] (n7) at (4,1) {};
\node[shape=circle,draw=black,minimum size = 6pt,inner sep=0.3pt] (n8) at (4,2) {};

	  \draw[line width=0.6mm,blue]  (n1) -- (n2)  ;
		\draw[line width=0.6mm,blue]  (n2) -- (n3)  ;
		\draw[line width=0.6mm,blue]  (n3) -- (n4)  ;
		\draw[line width=0.6mm,blue]  (n4) -- (n5)  ;
		\draw[line width=0.6mm,blue]  (n5) -- (n6)  ;
		\draw[line width=0.6mm,blue]  (n4) -- (n7)  ;
		\draw[line width=0.6mm,blue]  (n7) -- (n8)  ;
\end{tikzpicture} \\

 \hline
 \;&\;\\
 $T_5$ & \begin{tikzpicture}
  [scale=0.9,auto=left]
	\node[shape=circle,draw=black,minimum size = 6pt,inner sep=0.3pt] (n1) at (0,0) {};
	\node[shape=circle,draw=black,minimum size = 6pt,inner sep=0.3pt] (n2) at (1,0) {};
	\node[shape=circle,draw=black,minimum size = 6pt,inner sep=0.3pt] (n3) at (2,0) {};
  \node[shape=circle,draw=black,minimum size = 6pt,inner sep=0.3pt] (n4) at (3,0) {};
	\node[shape=circle,draw=black,minimum size = 6pt,inner sep=0.3pt] (n5) at (4,0) {};
  \node[shape=circle,draw=black,minimum size = 6pt,inner sep=0.3pt] (n6) at (5,0) {};
	\node[shape=circle,draw=black,minimum size = 6pt,inner sep=0.3pt] (n7) at (6,0) {};
  \node[shape=circle,draw=black,minimum size = 6pt,inner sep=0.3pt] (n8) at (3,1) {};

	  \draw[line width=0.6mm,blue]  (n1) -- (n2)  ;
		\draw[line width=0.6mm,blue]  (n2) -- (n3)  ;
		\draw[line width=0.6mm,blue]  (n3) -- (n4)  ;
		\draw[line width=0.6mm,blue]  (n4) -- (n5)  ;
		\draw[line width=0.6mm,blue]  (n5) -- (n6)  ;
		\draw[line width=0.6mm,blue]  (n6) -- (n7)  ;
		\draw[line width=0.6mm,blue]  (n4) -- (n8)  ;
\end{tikzpicture} \\

 \hline
 \;&\;\\
 $T_6$ & \begin{tikzpicture}
  [scale=0.9,auto=left]
	\node[shape=circle,draw=black,minimum size = 6pt,inner sep=0.3pt] (n1) at (0,0) {};
	\node[shape=circle,draw=black,minimum size = 6pt,inner sep=0.3pt] (n2) at (1,0) {};
	\node[shape=circle,draw=black,minimum size = 6pt,inner sep=0.3pt] (n3) at (2,0) {};
  \node[shape=circle,draw=black,minimum size = 6pt,inner sep=0.3pt] (n4) at (3,0) {};
	\node[shape=circle,draw=black,minimum size = 6pt,inner sep=0.3pt] (n5) at (4,0) {};
  \node[shape=circle,draw=black,minimum size = 6pt,inner sep=0.3pt] (n6) at (4,1) {};
	\node[shape=circle,draw=black,minimum size = 6pt,inner sep=0.3pt] (n8) at (4,-1) {};
  \node[shape=circle,draw=black,minimum size = 6pt,inner sep=0.3pt] (n7) at (5,0) {};

	  \draw[line width=0.6mm,blue]  (n1) -- (n2)  ;
		\draw[line width=0.6mm,blue]  (n2) -- (n3)  ;
		\draw[line width=0.6mm,blue]  (n3) -- (n4)  ;
		\draw[line width=0.6mm,blue]  (n4) -- (n5)  ;
		\draw[line width=0.6mm,blue]  (n5) -- (n6)  ;
		\draw[line width=0.6mm,blue]  (n5) -- (n7)  ;
		\draw[line width=0.6mm,blue]  (n5) -- (n8)  ;
\end{tikzpicture}  \\

 \hline
 \;&\;\\
 $T_7$ & \begin{tikzpicture}
  [scale=0.9,auto=left]
	\node[shape=circle,draw=black,minimum size = 6pt,inner sep=0.3pt] (n1) at (0,0) {};
	\node[shape=circle,draw=black,minimum size = 6pt,inner sep=0.3pt] (n2) at (1,0) {};
	\node[shape=circle,draw=black,minimum size = 6pt,inner sep=0.3pt] (n3) at (2,0) {};
  \node[shape=circle,draw=black,minimum size = 6pt,inner sep=0.3pt] (n4) at (3,0) {};
	\node[shape=circle,draw=black,minimum size = 6pt,inner sep=0.3pt] (n5) at (4,0) {};
  \node[shape=circle,draw=black,minimum size = 6pt,inner sep=0.3pt] (n6) at (3,1) {};
	\node[shape=circle,draw=black,minimum size = 6pt,inner sep=0.3pt] (n7) at (3,-1) {};
  \node[shape=circle,draw=black,minimum size = 6pt,inner sep=0.3pt] (n8) at (5,0) {};

	  \draw[line width=0.6mm,blue]  (n1) -- (n2)  ;
		\draw[line width=0.6mm,blue]  (n2) -- (n3)  ;
		\draw[line width=0.6mm,blue]  (n3) -- (n4)  ;
		\draw[line width=0.6mm,blue]  (n4) -- (n5)  ;
		\draw[line width=0.6mm,blue]  (n4) -- (n6)  ;
		\draw[line width=0.6mm,blue]  (n4) -- (n7)  ;
		\draw[line width=0.6mm,blue]  (n5) -- (n8)  ;
\end{tikzpicture}  \\

 \hline
 \;&\;\\
 $T_8$ &\begin{tikzpicture}
  [scale=0.9,auto=left]
	\node[shape=circle,draw=black,minimum size = 6pt,inner sep=0.3pt] (n1) at (0,0) {};
	\node[shape=circle,draw=black,minimum size = 6pt,inner sep=0.3pt] (n2) at (1,0) {};
	\node[shape=circle,draw=black,minimum size = 6pt,inner sep=0.3pt] (n3) at (2,0) {};
  \node[shape=circle,draw=black,minimum size = 6pt,inner sep=0.3pt] (n4) at (3,0) {};
	\node[shape=circle,draw=black,minimum size = 6pt,inner sep=0.3pt] (n5) at (4,0) {};
  \node[shape=circle,draw=black,minimum size = 6pt,inner sep=0.3pt] (n6) at (2,1) {};
	\node[shape=circle,draw=black,minimum size = 6pt,inner sep=0.3pt] (n7) at (2,2) {};
  \node[shape=circle,draw=black,minimum size = 6pt,inner sep=0.3pt] (n8) at (2,-1) {};

	  \draw[line width=0.6mm,blue]  (n1) -- (n2)  ;
		\draw[line width=0.6mm,blue]  (n2) -- (n3)  ;
		\draw[line width=0.6mm,blue]  (n3) -- (n4)  ;
		\draw[line width=0.6mm,blue]  (n4) -- (n5)  ;
		\draw[line width=0.6mm,blue]  (n3) -- (n6)  ;
		\draw[line width=0.6mm,blue]  (n6) -- (n7)  ;
		\draw[line width=0.6mm,blue]  (n3) -- (n8)  ;
\end{tikzpicture}  \\


 \hline
 \;&\;\\
 $T_9$ & \begin{tikzpicture}
  [scale=0.9,auto=left]
	\node[shape=circle,draw=black,minimum size = 6pt,inner sep=0.3pt] (n1) at (1,0) {};
	\node[shape=circle,draw=black,minimum size = 6pt,inner sep=0.3pt] (n2) at (2,0) {};
  \node[shape=circle,draw=black,minimum size = 6pt,inner sep=0.3pt] (n3) at (3,0) {};
	\node[shape=circle,draw=black,minimum size = 6pt,inner sep=0.3pt] (n4) at (4,0) {};
  	\node[shape=circle,draw=black,minimum size = 6pt,inner sep=0.3pt] (n5) at (3.7,0.95) {};
    \node[shape=circle,draw=black,minimum size = 6pt,inner sep=0.3pt] (n6) at (4.9,0.6) {};
  \node[shape=circle,draw=black,minimum size = 6pt,inner sep=0.3pt] (n7) at (4.9,-0.6) {};
\node[shape=circle,draw=black,minimum size = 6pt,inner sep=0.3pt] (n8) at (3.7,-0.95) {};

	  \draw[line width=0.6mm,blue]  (n1) -- (n2)  ;
		\draw[line width=0.6mm,blue]  (n2) -- (n3)  ;
		\draw[line width=0.6mm,blue]  (n3) -- (n4)  ;
		\draw[line width=0.6mm,blue]  (n4) -- (n5)  ;
		\draw[line width=0.6mm,blue]  (n4) -- (n6)  ;
		\draw[line width=0.6mm,blue]  (n4) -- (n7)  ;
		\draw[line width=0.6mm,blue]  (n4) -- (n8)  ;
\end{tikzpicture} \\

 \hline
 \;&\;\\
 $T_{10}$ & \begin{tikzpicture}
  [scale=0.9,auto=left]
	\node[shape=circle,draw=black,minimum size = 6pt,inner sep=0.3pt] (n1) at (1,0) {};
	\node[shape=circle,draw=black,minimum size = 6pt,inner sep=0.3pt] (n2) at (2,0) {};
  \node[shape=circle,draw=black,minimum size = 6pt,inner sep=0.3pt] (n3) at (3,0) {};
	\node[shape=circle,draw=black,minimum size = 6pt,inner sep=0.3pt] (n4) at (4,0) {};
  	\node[shape=circle,draw=black,minimum size = 6pt,inner sep=0.3pt] (n5) at (5,0) {};
    \node[shape=circle,draw=black,minimum size = 6pt,inner sep=0.3pt] (n6) at (2.5,0.88) {};
  \node[shape=circle,draw=black,minimum size = 6pt,inner sep=0.3pt] (n7) at (3.5,0.88) {};
\node[shape=circle,draw=black,minimum size = 6pt,inner sep=0.3pt] (n8) at (3,-1) {};

	  \draw[line width=0.6mm,blue]  (n1) -- (n2)  ;
		\draw[line width=0.6mm,blue]  (n2) -- (n3)  ;
		\draw[line width=0.6mm,blue]  (n3) -- (n4)  ;
		\draw[line width=0.6mm,blue]  (n4) -- (n5)  ;
		\draw[line width=0.6mm,blue]  (n3) -- (n6)  ;
		\draw[line width=0.6mm,blue]  (n3) -- (n7)  ;
		\draw[line width=0.6mm,blue]  (n3) -- (n8)  ;
\end{tikzpicture}  \\
 \hline
 \;&\;\\
 $T_{11}$ & \begin{tikzpicture}
  [scale=0.9,auto=left]
	\node[shape=circle,draw=black,minimum size = 6pt,inner sep=0.3pt] (n1) at (1,0) {};
	\node[shape=circle,draw=black,minimum size = 6pt,inner sep=0.3pt] (n2) at (2,0) {};
  \node[shape=circle,draw=black,minimum size = 6pt,inner sep=0.3pt] (n3) at (3,0) {};
  	\node[shape=circle,draw=black,minimum size = 6pt,inner sep=0.3pt] (n4) at (2.5,0.88) {};
    \node[shape=circle,draw=black,minimum size = 6pt,inner sep=0.3pt] (n5) at (3.5,0.88) {};
			\node[shape=circle,draw=black,minimum size = 6pt,inner sep=0.3pt] (n6) at (4,0) {};
  \node[shape=circle,draw=black,minimum size = 6pt,inner sep=0.3pt] (n7) at (3.5,-0.88) {};
\node[shape=circle,draw=black,minimum size = 6pt,inner sep=0.3pt] (n8) at (2.5,-0.88) {};

	  \draw[line width=0.6mm,blue]  (n1) -- (n2)  ;
		\draw[line width=0.6mm,blue]  (n2) -- (n3)  ;
		\draw[line width=0.6mm,blue]  (n3) -- (n4)  ;
		\draw[line width=0.6mm,blue]  (n3) -- (n5)  ;
		\draw[line width=0.6mm,blue]  (n3) -- (n6)  ;
		\draw[line width=0.6mm,blue]  (n3) -- (n7)  ;
		\draw[line width=0.6mm,blue]  (n3) -- (n8)  ;
\end{tikzpicture}  \\
 \hline
 \;&\;\\
 $T_{12}$ & \begin{tikzpicture}
  [scale=0.9,auto=left]
	\node[shape=circle,draw=black,minimum size = 6pt,inner sep=0.3pt] (n1) at (1,0) {};
	\node[shape=circle,draw=black,minimum size = 6pt,inner sep=0.3pt] (n2) at (0,0) {};
  \node[shape=circle,draw=black,minimum size = 6pt,inner sep=0.3pt] (n3) at (0.36,0.78) {};
  	\node[shape=circle,draw=black,minimum size = 6pt,inner sep=0.3pt] (n4) at (1.22,0.95) {};
    \node[shape=circle,draw=black,minimum size = 6pt,inner sep=0.3pt] (n5) at (1.9,0.43) {};
			\node[shape=circle,draw=black,minimum size = 6pt,inner sep=0.3pt] (n6) at (1.9,-0.43) {};
  \node[shape=circle,draw=black,minimum size = 6pt,inner sep=0.3pt] (n7) at (1.22,-0.95) {};
\node[shape=circle,draw=black,minimum size = 6pt,inner sep=0.3pt] (n8) at (0.36,-0.78) {};

	  \draw[line width=0.6mm,blue]  (n1) -- (n2)  ;
		\draw[line width=0.6mm,blue]  (n1) -- (n3)  ;
		\draw[line width=0.6mm,blue]  (n1) -- (n4)  ;
		\draw[line width=0.6mm,blue]  (n1) -- (n5)  ;
		\draw[line width=0.6mm,blue]  (n1) -- (n6)  ;
		\draw[line width=0.6mm,blue]  (n1) -- (n7)  ;
		\draw[line width=0.6mm,blue]  (n1) -- (n8)  ;
\end{tikzpicture}  \\

 \hline
 \;&\;\\
 $T_{13}$ & \begin{tikzpicture}
  [scale=0.9,auto=left]
	\node[shape=circle,draw=black,minimum size = 6pt,inner sep=0.3pt] (n1) at (0,0) {};
	\node[shape=circle,draw=black,minimum size = 6pt,inner sep=0.3pt] (n2) at (1,0) {};
	\node[shape=circle,draw=black,minimum size = 6pt,inner sep=0.3pt] (n3) at (2,0) {};
  \node[shape=circle,draw=black,minimum size = 6pt,inner sep=0.3pt] (n4) at (3,0) {};
	\node[shape=circle,draw=black,minimum size = 6pt,inner sep=0.3pt] (n5) at (4,0) {};
  \node[shape=circle,draw=black,minimum size = 6pt,inner sep=0.3pt] (n6) at (5,0) {};
	\node[shape=circle,draw=black,minimum size = 6pt,inner sep=0.3pt] (n7) at (1,1) {};
  \node[shape=circle,draw=black,minimum size = 6pt,inner sep=0.3pt] (n8) at (4,1) {};

	  \draw[line width=0.6mm,blue]  (n1) -- (n2)  ;
		\draw[line width=0.6mm,blue]  (n2) -- (n3)  ;
		\draw[line width=0.6mm,blue]  (n3) -- (n4)  ;
		\draw[line width=0.6mm,blue]  (n4) -- (n5)  ;
		\draw[line width=0.6mm,blue]  (n5) -- (n6)  ;
		\draw[line width=0.6mm,blue]  (n2) -- (n7)  ;
		\draw[line width=0.6mm,blue]  (n5) -- (n8)  ;
\end{tikzpicture}  \\
 \hline
 \;&\;\\
 $T_{14}$ & \begin{tikzpicture}
  [scale=0.9,auto=left]
	\node[shape=circle,draw=black,minimum size = 6pt,inner sep=0.3pt] (n1) at (0,0) {};
	\node[shape=circle,draw=black,minimum size = 6pt,inner sep=0.3pt] (n2) at (1,1) {};
	\node[shape=circle,draw=black,minimum size = 6pt,inner sep=0.3pt] (n3) at (1,0) {};
  \node[shape=circle,draw=black,minimum size = 6pt,inner sep=0.3pt] (n4) at (2,0) {};
	\node[shape=circle,draw=black,minimum size = 6pt,inner sep=0.3pt] (n5) at (3,1) {};
  \node[shape=circle,draw=black,minimum size = 6pt,inner sep=0.3pt] (n6) at (3,0) {};
	\node[shape=circle,draw=black,minimum size = 6pt,inner sep=0.3pt] (n7) at (4,0) {};
  \node[shape=circle,draw=black,minimum size = 6pt,inner sep=0.3pt] (n8) at (5,0) {};

	  \draw[line width=0.6mm,blue]  (n1) -- (n3)  ;
		\draw[line width=0.6mm,blue]  (n2) -- (n3)  ;
		\draw[line width=0.6mm,blue]  (n3) -- (n4)  ;
		\draw[line width=0.6mm,blue]  (n4) -- (n6)  ;
		\draw[line width=0.6mm,blue]  (n5) -- (n6)  ;
		\draw[line width=0.6mm,blue]  (n6) -- (n7)  ;
		\draw[line width=0.6mm,blue]  (n7) -- (n8)  ;
\end{tikzpicture}  \\
 \hline
 \;&\;\\
 $T_{15}$ & \begin{tikzpicture}
  [scale=0.9,auto=left]
	\node[shape=circle,draw=black,minimum size = 6pt,inner sep=0.3pt] (n1) at (0,0) {};
	\node[shape=circle,draw=black,minimum size = 6pt,inner sep=0.3pt] (n2) at (1,1) {};
	\node[shape=circle,draw=black,minimum size = 6pt,inner sep=0.3pt] (n3) at (1,0) {};
  \node[shape=circle,draw=black,minimum size = 6pt,inner sep=0.3pt] (n4) at (2,1) {};
	\node[shape=circle,draw=black,minimum size = 6pt,inner sep=0.3pt] (n5) at (2,0) {};
  \node[shape=circle,draw=black,minimum size = 6pt,inner sep=0.3pt] (n6) at (3,1) {};
	\node[shape=circle,draw=black,minimum size = 6pt,inner sep=0.3pt] (n7) at (3,0) {};
  \node[shape=circle,draw=black,minimum size = 6pt,inner sep=0.3pt] (n8) at (4,0) {};

	  \draw[line width=0.6mm,blue]  (n1) -- (n3)  ;
		\draw[line width=0.6mm,blue]  (n3) -- (n5)  ;
		\draw[line width=0.6mm,blue]  (n5) -- (n7)  ;
		\draw[line width=0.6mm,blue]  (n7) -- (n8)  ;
		\draw[line width=0.6mm,blue]  (n4) -- (n5)  ;
		\draw[line width=0.6mm,blue]  (n2) -- (n4)  ;
		\draw[line width=0.6mm,blue]  (n4) -- (n6)  ;
\end{tikzpicture}  \\
 \hline
 \;&\;\\
 $T_{16}$ & \begin{tikzpicture}
  [scale=0.9,auto=left]
	\node[shape=circle,draw=black,minimum size = 6pt,inner sep=0.3pt] (n1) at (0,0) {};
	\node[shape=circle,draw=black,minimum size = 6pt,inner sep=0.3pt] (n2) at (1,0) {};
	\node[shape=circle,draw=black,minimum size = 6pt,inner sep=0.3pt] (n3) at (2,1) {};
  \node[shape=circle,draw=black,minimum size = 6pt,inner sep=0.3pt] (n4) at (2,0) {};
	\node[shape=circle,draw=black,minimum size = 6pt,inner sep=0.3pt] (n5) at (3,1) {};
  \node[shape=circle,draw=black,minimum size = 6pt,inner sep=0.3pt] (n6) at (3,0) {};
	\node[shape=circle,draw=black,minimum size = 6pt,inner sep=0.3pt] (n7) at (4,0) {};
  \node[shape=circle,draw=black,minimum size = 6pt,inner sep=0.3pt] (n8) at (5,0) {};

	  \draw[line width=0.6mm,blue]  (n1) -- (n2)  ;
		\draw[line width=0.6mm,blue]  (n2) -- (n4)  ;
		\draw[line width=0.6mm,blue]  (n3) -- (n4)  ;
		\draw[line width=0.6mm,blue]  (n4) -- (n6)  ;
		\draw[line width=0.6mm,blue]  (n5) -- (n6)  ;
		\draw[line width=0.6mm,blue]  (n6) -- (n7)  ;
		\draw[line width=0.6mm,blue]  (n7) -- (n8)  ;
\end{tikzpicture}  \\



 \hline
 \;&\;\\
 $T_{17}$ & \begin{tikzpicture}
  [scale=0.9,auto=left]
	\node[shape=circle,draw=black,minimum size = 6pt,inner sep=0.3pt] (n1) at (0,0) {};
	\node[shape=circle,draw=black,minimum size = 6pt,inner sep=0.3pt] (n2) at (1,1) {};
	\node[shape=circle,draw=black,minimum size = 6pt,inner sep=0.3pt] (n3) at (1,0) {};
  \node[shape=circle,draw=black,minimum size = 6pt,inner sep=0.3pt] (n4) at (2,1) {};
	\node[shape=circle,draw=black,minimum size = 6pt,inner sep=0.3pt] (n5) at (2,0) {};
  \node[shape=circle,draw=black,minimum size = 6pt,inner sep=0.3pt] (n6) at (3,0) {};
	\node[shape=circle,draw=black,minimum size = 6pt,inner sep=0.3pt] (n7) at (4,0) {};
  \node[shape=circle,draw=black,minimum size = 6pt,inner sep=0.3pt] (n8) at (5,0) {};

	  \draw[line width=0.6mm,blue]  (n1) -- (n3)  ;
		\draw[line width=0.6mm,blue]  (n2) -- (n3)  ;
		\draw[line width=0.6mm,blue]  (n3) -- (n5)  ;
		\draw[line width=0.6mm,blue]  (n4) -- (n5)  ;
		\draw[line width=0.6mm,blue]  (n5) -- (n6)  ;
		\draw[line width=0.6mm,blue]  (n6) -- (n7)  ;
		\draw[line width=0.6mm,blue]  (n7) -- (n8)  ;
\end{tikzpicture}  \\
 \hline
 \;&\;\\
 $T_{18}$ & \begin{tikzpicture}
  [scale=0.9,auto=left]
	\node[shape=circle,draw=black,minimum size = 6pt,inner sep=0.3pt] (n1) at (0,0) {};
	\node[shape=circle,draw=black,minimum size = 6pt,inner sep=0.3pt] (n2) at (1,1) {};
	\node[shape=circle,draw=black,minimum size = 6pt,inner sep=0.3pt] (n3) at (1,0) {};
  \node[shape=circle,draw=black,minimum size = 6pt,inner sep=0.3pt] (n4) at (2,0) {};
	\node[shape=circle,draw=black,minimum size = 6pt,inner sep=0.3pt] (n5) at (3,1) {};
  \node[shape=circle,draw=black,minimum size = 6pt,inner sep=0.3pt] (n6) at (3,-1) {};
	\node[shape=circle,draw=black,minimum size = 6pt,inner sep=0.3pt] (n7) at (3,0) {};
  \node[shape=circle,draw=black,minimum size = 6pt,inner sep=0.3pt] (n8) at (4,0) {};

	  \draw[line width=0.6mm,blue]  (n1) -- (n3)  ;
		\draw[line width=0.6mm,blue]  (n2) -- (n3)  ;
		\draw[line width=0.6mm,blue]  (n3) -- (n4)  ;
		\draw[line width=0.6mm,blue]  (n7) -- (n5)  ;
		\draw[line width=0.6mm,blue]  (n7) -- (n6)  ;
		\draw[line width=0.6mm,blue]  (n4) -- (n7)  ;
		\draw[line width=0.6mm,blue]  (n7) -- (n8)  ;
\end{tikzpicture}  \\
 \hline
 \;&\;\\
 $T_{19}$ & \begin{tikzpicture}
  [scale=0.9,auto=left]
	\node[shape=circle,draw=black,minimum size = 6pt,inner sep=0.3pt] (n1) at (0,0) {};
	\node[shape=circle,draw=black,minimum size = 6pt,inner sep=0.3pt] (n2) at (1,1) {};
	\node[shape=circle,draw=black,minimum size = 6pt,inner sep=0.3pt] (n3) at (1,0) {};
  \node[shape=circle,draw=black,minimum size = 6pt,inner sep=0.3pt] (n4) at (2,0) {};
	\node[shape=circle,draw=black,minimum size = 6pt,inner sep=0.3pt] (n5) at (2,1) {};
  \node[shape=circle,draw=black,minimum size = 6pt,inner sep=0.3pt] (n6) at (2,-1) {};
	\node[shape=circle,draw=black,minimum size = 6pt,inner sep=0.3pt] (n7) at (3,0) {};
  \node[shape=circle,draw=black,minimum size = 6pt,inner sep=0.3pt] (n8) at (4,0) {};

	  \draw[line width=0.6mm,blue]  (n1) -- (n3)  ;
		\draw[line width=0.6mm,blue]  (n2) -- (n3)  ;
		\draw[line width=0.6mm,blue]  (n3) -- (n4)  ;
		\draw[line width=0.6mm,blue]  (n4) -- (n5)  ;
		\draw[line width=0.6mm,blue]  (n4) -- (n6)  ;
		\draw[line width=0.6mm,blue]  (n4) -- (n7)  ;
		\draw[line width=0.6mm,blue]  (n7) -- (n8)  ;
\end{tikzpicture}  \\
 \hline
 \;&\;\\
 $T_{20}$ & \begin{tikzpicture}
  [scale=0.9,auto=left]
	\node[shape=circle,draw=black,minimum size = 6pt,inner sep=0.3pt] (n1) at (0,0) {};
	\node[shape=circle,draw=black,minimum size = 6pt,inner sep=0.3pt] (n2) at (1,0) {};
	\node[shape=circle,draw=black,minimum size = 6pt,inner sep=0.3pt] (n3) at (2,0) {};
  \node[shape=circle,draw=black,minimum size = 6pt,inner sep=0.3pt] (n4) at (2,1) {};
	\node[shape=circle,draw=black,minimum size = 6pt,inner sep=0.3pt] (n5) at (3,0) {};
  \node[shape=circle,draw=black,minimum size = 6pt,inner sep=0.3pt] (n6) at (3,1) {};
	\node[shape=circle,draw=black,minimum size = 6pt,inner sep=0.3pt] (n7) at (3,-1) {};
  \node[shape=circle,draw=black,minimum size = 6pt,inner sep=0.3pt] (n8) at (4,0) {};

	  \draw[line width=0.6mm,blue]  (n1) -- (n2)  ;
		\draw[line width=0.6mm,blue]  (n2) -- (n3)  ;
		\draw[line width=0.6mm,blue]  (n3) -- (n4)  ;
		\draw[line width=0.6mm,blue]  (n3) -- (n5)  ;
		\draw[line width=0.6mm,blue]  (n5) -- (n6)  ;
		\draw[line width=0.6mm,blue]  (n5) -- (n7)  ;
		\draw[line width=0.6mm,blue]  (n5) -- (n8)  ;
\end{tikzpicture}  \\
 \hline
 \;&\;\\
 $T_{21}$ & \begin{tikzpicture}
  [scale=0.9,auto=left]

	\node[shape=circle,draw=black,minimum size = 6pt,inner sep=0.3pt] (n1) at (1,0) {};
	\node[shape=circle,draw=black,minimum size = 6pt,inner sep=0.3pt] (n3) at (2,0) {};
  \node[shape=circle,draw=black,minimum size = 6pt,inner sep=0.3pt] (n2) at (2,1) {};
	\node[shape=circle,draw=black,minimum size = 6pt,inner sep=0.3pt] (n4) at (3,1) {};
  \node[shape=circle,draw=black,minimum size = 6pt,inner sep=0.3pt] (n5) at (3,0) {};
	\node[shape=circle,draw=black,minimum size = 6pt,inner sep=0.3pt] (n6) at (3,-1) {};
  \node[shape=circle,draw=black,minimum size = 6pt,inner sep=0.3pt] (n7) at (3.87,0.5) {};
	\node[shape=circle,draw=black,minimum size = 6pt,inner sep=0.3pt] (n8) at (3.87,-0.5) {};
	  \draw[line width=0.6mm,blue]  (n1) -- (n3)  ;
		\draw[line width=0.6mm,blue]  (n2) -- (n3)  ;
		\draw[line width=0.6mm,blue]  (n3) -- (n5)  ;
		\draw[line width=0.6mm,blue]  (n4) -- (n5)  ;
		\draw[line width=0.6mm,blue]  (n5) -- (n6)  ;
		\draw[line width=0.6mm,blue]  (n5) -- (n7)  ;
		\draw[line width=0.6mm,blue]  (n5) -- (n8)  ;
\end{tikzpicture}  \\
 \hline
 \;&\;\\
 $T_{22}$ & \begin{tikzpicture}
  [scale=0.9,auto=left]

	\node[shape=circle,draw=black,minimum size = 6pt,inner sep=0.3pt] (n1) at (1,0) {};
	\node[shape=circle,draw=black,minimum size = 6pt,inner sep=0.3pt] (n3) at (2,0) {};
  \node[shape=circle,draw=black,minimum size = 6pt,inner sep=0.3pt] (n2) at (2,1) {};
	\node[shape=circle,draw=black,minimum size = 6pt,inner sep=0.3pt] (n4) at (2,-1) {};
  \node[shape=circle,draw=black,minimum size = 6pt,inner sep=0.3pt] (n5) at (3,1) {};
	\node[shape=circle,draw=black,minimum size = 6pt,inner sep=0.3pt] (n6) at (3,0) {};
  \node[shape=circle,draw=black,minimum size = 6pt,inner sep=0.3pt] (n7) at (3,-1) {};
	\node[shape=circle,draw=black,minimum size = 6pt,inner sep=0.3pt] (n8) at (4,0) {};
	  \draw[line width=0.6mm,blue]  (n1) -- (n3)  ;
		\draw[line width=0.6mm,blue]  (n2) -- (n3)  ;
		\draw[line width=0.6mm,blue]  (n3) -- (n4)  ;
		\draw[line width=0.6mm,blue]  (n3) -- (n6)  ;
		\draw[line width=0.6mm,blue]  (n5) -- (n6)  ;
		\draw[line width=0.6mm,blue]  (n6) -- (n7)  ;
		\draw[line width=0.6mm,blue]  (n6) -- (n8)  ;
\end{tikzpicture}\\
 \hline
 \;&\;\\
 $T_{23}$ &\begin{tikzpicture}
  [scale=0.9,auto=left]
	\node[shape=circle,draw=black,minimum size = 6pt,inner sep=0.3pt] (n1) at (0,0) {};
	\node[shape=circle,draw=black,minimum size = 6pt,inner sep=0.3pt] (n2) at (1,0) {};
	\node[shape=circle,draw=black,minimum size = 6pt,inner sep=0.3pt] (n3) at (1,1) {};
  \node[shape=circle,draw=black,minimum size = 6pt,inner sep=0.3pt] (n4) at (2,0) {};
	\node[shape=circle,draw=black,minimum size = 6pt,inner sep=0.3pt] (n5) at (2,1) {};
  \node[shape=circle,draw=black,minimum size = 6pt,inner sep=0.3pt] (n6) at (3,0) {};
	\node[shape=circle,draw=black,minimum size = 6pt,inner sep=0.3pt] (n7) at (3,1) {};
  \node[shape=circle,draw=black,minimum size = 6pt,inner sep=0.3pt] (n8) at (4,0) {};

	  \draw[line width=0.6mm,blue]  (n1) -- (n2)  ;
		\draw[line width=0.6mm,blue]  (n2) -- (n3)  ;
		\draw[line width=0.6mm,blue]  (n2) -- (n4)  ;
		\draw[line width=0.6mm,blue]  (n4) -- (n5)  ;
		\draw[line width=0.6mm,blue]  (n4) -- (n6)  ;
		\draw[line width=0.6mm,blue]  (n6) -- (n7)  ;
		\draw[line width=0.6mm,blue]  (n6) -- (n8)  ;
\end{tikzpicture} \\
 \hline
\end{longtable}

\section*{Acknowledgment}
This paper started as an undergraduate research project. Matthew Fyfe would like to acknowledge partial support from a BGSU Center for Undergraduate Research and Scholarship grant.

\bibliographystyle{amsalpha}

\end{document}